\theoremstyle{plain}
\numberwithin{equation}{section}
\newtheorem{theorem}{Theorem}[section]
\newtheorem{lemma}[theorem]{Lemma}
\newtheorem{proposition}[theorem]{Proposition}
\theoremstyle{remark}
\DeclareMathOperator{\Rset}{\mathbf{R}}
\definecolor{brown}{rgb}{0.5,0,0}
\definecolor{backgroundcolor}{rgb}{0.98, 0.92, 0.73}
\def\R{{\mathbf R}}
\def\N{\mathbb N}
\def\H{\mathbb H}
\def\sign{\mathop{\rm sign}\nolimits}
\definecolor{ORCIDLOGOCOL}{HTML}{A6CE39}
\def\cfac#1{\ifmmode\setbox7\hbox{$\accent"5E#1$}\else\setbox7\hbox{\accent"5E#1}\penalty 10000\relax\fi\raise 1\ht7\hbox{\lower1.05ex\hbox to 1\wd7{\hss\accent"13\hss}}\penalty 10000\hskip-1\wd7\penalty 10000\box7 }
\author[Q.A. Ng\^o]{Qu\cfac{o}c Anh Ng\^o}
\address[Q.A. Ng\^o]{Institute of Research and Development\\
Duy T\^an University\\
D\`a N\v ang, Vi\^et Nam.}
\address[Q.A. Ng\^o]{Department of Mathematics\\
College of Science, Vi\^{e}t Nam National University\\
H\`{a} N\^{o}i, Vi\^{e}t Nam.}
\email{\href{mailto: Q.A. Ng\^o <nqanh@vnu.edu.vn>}{nqanh@vnu.edu.vn}}
\email{\href{mailto: Q.A. Ng\^o <bookworm\_vn@yahoo.com>}{bookworm\_vn@yahoo.com}}
\author[V.H. Nguyen]{Van Hoang Nguyen}
\address[V.H. Nguyen]{Institut de Math\'ematiques de Toulouse\\
Universit\'e Paul Sabatier\\
31062 Toulouse c\'edex 09, France.}
\address[V.H. Nguyen]{Institute of Mathematics\\
Vietnam Academy of Science and Technology\\
Hanoi, Vietnam.}
\email{\href{mailto: V.H. Nguyen <vanhoang0610@yahoo.com>}{vanhoang0610@yahoo.com}}
\email{\href{mailto: V.H. Nguyen <nvhoang@math.ac.vn>}{nvhoang@math.ac.vn}}
\begin{document}

\allowdisplaybreaks

\title[Sharp Adams--Moser--Trudinger type inequalities in the hyperbolic space]
{Sharp Adams--Moser--Trudinger type inequalities in the hyperbolic space}

\begin{abstract} 
The purpose of this paper is to establish some Adams--Moser--Trudinger inequalities, which are the borderline cases of the Sobolev embedding, in the hyperbolic space $\H^n$. First, we prove a sharp Adams' inequality of order two with the exact growth condition in $\H^n$. Then we use it to derive a sharp Adams-type inequality and an Adachi--Tanaka-type inequality. We also prove a sharp Adams-type inequality with Navier boundary condition on any bounded domain of $\H^n$, which generalizes the result of Tarsi to the setting of hyperbolic spaces. Finally, we establish a Lions-type lemma and an improved Adams-type inequality in the spirit of Lions in $\H^n$. Our proofs rely on the symmetrization method extended to hyperbolic spaces.
\end{abstract}

\date{\bf \today \; at \, \currenttime}

\subjclass[2000]{35J30, 46E35, 58J05}

\keywords{Hyperbolic space, sharp Moser--Trudinger inequality, sharp Adams inequality, Lions lemma, exact growth condition}

\maketitle


\section{Introduction}

Sobolev spaces, geometric and analytic inequalities can be considered as one of the central tools in many areas such as analysis, differential geometry, partial differential equations, calculus of variations, etc. Of importance, among these inequalities, are the classical Sobolev inequalities which assert that the following embedding $W_0^{k,p}(\Omega) \hookrightarrow L^q(\Omega)$ is continuous for $n\geqslant 2$, $kp< n$, and $1 \leqslant q \leqslant np/(n-kp)$ where $\Omega$ is a bounded domain in $\R^n$. However, in the limiting case $kp =n$, we can easily show by many examples that $W_0^{k,n/k}(\Omega)\not\subset L^\infty(\Omega)$. In this special situation, the so-called Moser--Trudinger inequality and its higher order version, known as Adams' inequality, are the perfect replacements; see \cite{T1967, M1970, Adams1988}. 

It is now widely recognized that the Moser--Trudinger and Adams inequalities have played so many important roles and have been widely used in geometric analysis and PDE; for example, we refer the reader to \cite{CY2003, LamLu2012a, LamLu2012b, LamLu2014, Shaw1987, TZ2000} and references therein. 

These remarkable inequalities have also been generalized in many directions. For instance, the singular Moser--Trudinger inequality was discovered in \cite{AS2007}, the best constant for the Moser--Trudinger inequality on domains of finite measure on the Heisenberg group was found in \cite{CohnLu2001,LamLuTang2012}. There has also been substantial progress for the Moser--Trudinger inequality on the Euclidean spheres, on the CR spheres, as well as on any compact Riemannian manifold and on hyperbolic spaces; see \cite{Bec1993,BFM2013, CohnLu2001,CohnLu2004,Fontana1993,Li2005,LT2013}. 

For the question of the existence of optimal functions for the Moser--Trudinger inequality, it was first addressed by Carleson and Chang \cite{CC1986} on the  Euclidean balls. Then, this result was extended to arbitrary smooth domains by Flucher \cite{Flucher1992} and Lin \cite{Lin1996}. 

\subsection{Moser--Trudinger and Adams inequalities on $\R^n$}

\subsubsection{Moser--Trudinger inequalities on $\R^n$}

Speaking of Moser--Trudinger' inequality on bounded domains, it was established independently by Yudovi\v c \cite{Y1961}, Poho\v zaev \cite{P1965}, and Trudinger \cite{T1967}. Later, by sharpening Trudinger's inequality, Moser proved that there exists a sharp constant $\alpha_n > 0$ such that
\begin{equation}\label{eq:Moser--Trudinger}
\sup\limits_{\begin{subarray}{c} 
u \in C_0^\infty(\Omega):\\
 \int_\Omega |\nabla u|^n dx \leqslant 1
\end{subarray}}
 \int_\Omega \exp\big(\alpha |u|^{n/(n-1)} \big) dx < +\infty 
\tag{MT${}^\R_b$}
\end{equation}
for any $\alpha\leqslant \alpha_n$ and for any bounded domain $\Omega$ in $\R^n$. Furthermore, the constant $\alpha_n$ in \eqref{eq:Moser--Trudinger} is sharp in the sense that if $\alpha > \alpha_n$, then the supremum above will become infinity. Moser was able to compute the sharp constant $\alpha_n$ precisely, that is
\[
\alpha_n = n^{n/(n-1)} \Omega_n^{1/(n-1)},
\] 
where $\Omega_n$ denotes the volume of the unit ball $\mathbb B^n$ in $\R^n$. If we denote by $\omega_n$ the volume of the unit sphere $\mathbb S^n$ in $\R^{n+1}$, then
\[
\alpha_n = n \omega_{n-1}^{1/(n-1)}.
\]
When $\Omega$ has \textit{infinite measure}, the sharp version of Moser--Trudinger-type inequality for unbounded domains, or a ``subcritical'' Moser--Trudinger inequality, was established by Adachi and Tanaka in \cite{AT1999}. To be more precise, they proved that 
\begin{equation}\label{eq:Adachi-Tanaka}
\sup\limits_{\begin{subarray}{c} 
 u \in W^{1,n}(\R^n) \backslash \{ 0\} : \\ 
 \int_{\R^n} | \nabla u|^n dx \leqslant 1
\end{subarray}} 
 \frac 1{\|u\|_{L^n(\R^n)}^n} \int_{\R^n} \Phi_{n,1}(\alpha |u|^{n/(n-1)} ) dx 
 < +\infty,
 \tag{MT${}^\R_{us}$} 
\end{equation}
for any $\alpha \in (0,\alpha_n)$, where
\[
\Phi_{n,1}(t) = e^t -\sum\nolimits_{j=0}^{n-2} t^j/j!.
\] 
The constant $\alpha_n$, as appearing in the Moser--Trudinger inequality \eqref{eq:Moser--Trudinger}, is also sharp in the sense that if $\alpha \geqslant \alpha_n$, then the supremum in \eqref{eq:Adachi-Tanaka} is infinite. The question is: \textit{What happens when $\alpha =\alpha_n$?}

When $\alpha =\alpha_n$, the ``critical'' Moser--Trudinger inequality for any unbounded domain in $\R^n$ was proved by Ruf \cite{Ruf2005} for $n=2$ and by Li and Ruf \cite{LiRuf2008} for the case $n\geqslant 2$. This inequality asserts that 
\begin{equation}\label{eq:Li-Ruf}
\sup_{\begin{subarray}{c} u\in W_0^{1,n}(\Omega):\\
\|u\|_{W_0^{1,n}(\Omega)} \leqslant 1
\end{subarray}} \int_{\Omega} \Phi_{n,1}(\alpha_n |u|^{n/(n-1)} ) dx
< +\infty,
\tag{MT${}^\R_{uc}$} 
\end{equation}
for any domain $\Omega \subseteq \R^n$ with the supremum independent of $\Omega$, where
\[
\|u\|_{W^{1,n}_0(\Omega)} =(\|\nabla u\|_{L^n(\R^n)}^n + \|u\|_{L^n(\R^n)}^n )^{1/n}.
\] 
In addition, it was found that the same constant $\alpha_n$ is also sharp in the sense that the supremum in \eqref{eq:Li-Ruf} will be infinite if $\alpha_n$ is replaced by any $\alpha > \alpha_n$.

Following the works of Carleson and Chang \cite{CC1986},  Flucher \cite{Flucher1992}, and Lin \cite{Lin1996}, the existence of optimal functions for the Moser--Trudinger inequality in the entire space was studied in \cite{Ishiwata2011,LiRuf2008,Ruf2005}. More recently, sharp Moser--Trudinger inequalities has been established on the entire Heisenberg group at the critical case in \cite{LamLu2012c}, at the subcritical case in \cite{LamLuTang2014}, or in weighted form in Heisenberg-type groups in \cite{LamTang2013}, where any type of symmetrization arguments is not available. 

We note that there is a fundamental difference between \eqref{eq:Adachi-Tanaka} and \eqref{eq:Li-Ruf}. In fact, inequality \eqref{eq:Adachi-Tanaka} only holds for $\alpha < \alpha_n$ while inequality \eqref{eq:Li-Ruf} holds for all $\alpha \leqslant \alpha_n$. The reason behind this difference is that in \eqref{eq:Adachi-Tanaka} we require functions with the $L^n$-norm of their gradient less than or equal to $1$ while in \eqref{eq:Li-Ruf}, we require functions with $W^{1,n}$-norm less than or equal to $1$. In other word, the failure of the original Moser--Trudinger inequality \eqref{eq:Moser--Trudinger} on the entire space $\R^n$ can be recovered either by weakening the exponent $\alpha_n$ as in \eqref{eq:Adachi-Tanaka} or by strengthening Dirichlet's norm $\|\nabla u\|_{L^n(\R^n)}$ as in \eqref{eq:Li-Ruf}. 

A natural question arises: \textit{Can we still achieve the best constant $\alpha_n$ when we only require the condition $\|\nabla u\|_{L^n(\R^n)} \leqslant 1$?} This question was answered by Ibrahim, Masmoudi and Nakanishi \cite{IMN2015} for the case $n=2$ and by Masmoudi and Sani \cite{MS2015} for arbitrary $n\geqslant 2$. In their works, they proved the following inequality
\begin{equation}\label{eq:exactMT}
\sup\limits_{\begin{subarray}{c} 
 u \in {W^{1,n}}(\R^n) \backslash \{ 0\} : \\ 
 \|\nabla u\|_{L^n(\R^n)} \leqslant 1
\end{subarray}} 
\frac1{\|u\|_{L^n(\R^n)}^n} \int_{\R^n} \frac{\Phi_{n,1}(\alpha_n |u|^{n/(n-1)} )}{(1+|u|)^{n/(n-1)}} dx 
< +\infty. 
\tag{MT${}^\R_{ue}$} 
\end{equation}
Moreover, this inequality is sharp in the sense that it fails if the power $n/(n-1)$ in the denominator of \eqref{eq:exactMT} is replaced by any $p <n/(n-2)$.

\subsubsection{Adams inequalities on $\R^n$}

In the seminal work \cite{Adams1988}, Adams extended the Moser--Trudinger inequality \eqref{eq:Moser--Trudinger} to the higher order Sobolev space $W^{m,n/m}_0(\Omega)$, where $\Omega \subset \R^n$ is of finite measure. Let $m$ be a positive integer less than $n$, we denote the $m$th order gradient of a function $u$ on $\R^n$ by
\begin{equation*}
\nabla^m u=
\begin{cases}
\Delta^{m/2} u &\mbox{if $m$ is even,}\\
\nabla\Delta^{(m-1)/2} u &\mbox{if $m$ is odd.}
\end{cases}
\end{equation*}
Then Adams proved that there exists a sharp constant $\beta(n,m) > 0$ such that the following inequality 
\begin{equation}\label{eq:Adams}
\sup\limits_{\begin{subarray}{c} 
 u\in W^{m,n/m}_0(\Omega) : \\ 
 \int_\Omega |\nabla^m u|^{n/m} dx \leqslant 1
\end{subarray}}
 \int_\Omega \exp(\beta(n,m) |u|^{n/(n-m)}) dx 
< +\infty,
\tag{A${}^\R_b$}
\end{equation}
holds. Moreover, the constant $\beta(n,m)$ in \eqref{eq:Adams} is sharp in the sense that if we replace it by any $\beta > \beta(n,m)$, then the supremum becomes infinite. Adams was able to compute the sharp constant $\beta (n,m)$ to get
\[
\beta(n,m) = 
\begin{cases}
\Omega_n^{-1} \Big( \pi^{n/2} 2^m \dfrac{ \Gamma((m+1)/2) }{ \Gamma((n-m+1)/2) } \Big)^{n/(n-m)}&\mbox{if $m$ is odd,}\\
\Omega_n^{-1} \Big( \pi^{n/2} 2^m \dfrac{ \Gamma(m/2) }{ \Gamma((n-m)/2) } \Big)^{n/(n-m)} &\mbox{if $m$ is even.}
\end{cases}
\]
In terms of $\omega_{n-1}$, we can rewrite $\beta (n,m)$ as follows
\[
\beta(n,m) = 
\begin{cases}
n \omega_{n-1}^{-1} \Big( \pi^{n/2} 2^m \dfrac{ \Gamma ((m+1)/2 ) }{ \Gamma ((n-m+1)/2 ) } \Big)^{n/(n-m)}&\mbox{if $m$ is odd,}\\
n \omega_{n-1}^{-1} \Big( \pi^{n/2} 2^m \dfrac{ \Gamma (m/2 ) }{ \Gamma ((n-m)/2 ) } \Big)^{n/(n-m)} &\mbox{if $m$ is even.}
\end{cases}
\]
Notice that Adams' value of $\beta(n,1)$ agrees with Moser's value of $\alpha_n$.

Adams' inequality \eqref{eq:Adams} on domains of finite measure was recently extended by Tarsi \cite{Tarsi2012} to the larger space
\begin{equation*}
W_N^{m,n/m}(\Omega) =\Big\{u \in W^{m,n/m}\, :\, \Delta^j u = 0 \, \text{ on } \, \partial \Omega \, \text{ for } \, 0 \leqslant j \leqslant \big\lfloor m/2 \big\rfloor\Big\},
\end{equation*}
known as Sobolev's space with homogeneous Navier boundary conditions. Note that the space $W_N^{m,n/m}(\Omega)$ contains $W_0^{m,n/m}(\Omega)$ as a proper, closed subspace.

Inspired by \eqref{eq:Li-Ruf}, the sharp Adams inequality on the entire space $\R^n$ are also known, first proved by Ruf and Sani \cite{RufSani2013} for the case that $m$ is even and then by Lam and Lu \cite{LamLu2012d} for the remaining case. Their results read as follows: Let $m$ be an integer less than $n$ and $\Omega \subseteq \R^n$, for each $u\in W_0^{m,n/m}(\Omega)$ we denote
\[
\|u\|_{m,n} =
\begin{cases}
\|(-\Delta + I)^{m/2} u\|_{L^{n/m}(\Omega)}&\mbox{if $m$ is even,}\\
\left( \begin{gathered}
 \|\nabla(-\Delta + I)^{(m-1)/2} u\|_{L^{n/m}(\Omega)}^{n/m}  \hfill \\
+ \|(-\Delta + I)^{(m-1)/2} u\|_{L^{n/m}(\Omega)}^{n/m} \hfill \\ 
\end{gathered} \right)^{m/n} & \mbox{if $m$ is odd,}
\end{cases}
\]
then there holds
\begin{equation}\label{eq:criticalAdams}
\sup\limits_{\begin{subarray}{c} 
u\in W_0^{m,n/m}(\Omega):\\
 \|u\|_{m,n} \leqslant 1
\end{subarray}} 
\int_\Omega \Phi_{n,m}(\beta(n,m) |u|^{n/(n-m)}) dx < +\infty
\tag{A${}^\R_{uc}$}
\end{equation}
with the supremum independent of $\Omega$, where
\[
\Phi_{n,m}(t) = e^t -\sum\nolimits_{j=0}^{j_{n/m}-2} t^j/j!
\]
with
\[
j_{n/m} = \min\big\{j \in \N, j \geqslant n/m\big\}.
\] 
In addition, the constant $\beta(n,m)$ in \eqref{eq:criticalAdams} is sharp in the sense that if we replace $\beta(n,m)$ in \eqref{eq:criticalAdams} by any $\beta > \beta(n,m)$, then the supremum in \eqref{eq:criticalAdams} will be infinite. We refer the reader to \cite{LamLu2013} for a sharp Adams-type inequality of fractional order $\alpha \in (0,n)$, where a rearrangement-free argument was used.

Recently, Masmoudi and Sani \cite{MS2014} obtained a sharp Adams inequality with exact growth condition in $\R^4$. Then, Lu, Tang, and Zhu \cite{LTZ2015} extended the result of Masmoudi and Sani to all dimension $n \geqslant 2$ to get the following inequality
\begin{equation}\label{eq:AdamsexactRn}
\sup\limits_{\begin{subarray}{c} 
 u\in W^{2,n/2}(\R^n) \backslash \{ 0 \} : \\ 
 \|\nabla^2 u\|_{L^{n/2}(\R^n)} \leqslant 1
\end{subarray}} 
\frac 1{\|u\|_{L^{n/2}(\R^n)}^{n/2}} \int_{\R^n} \frac{\Phi_{n,2}(\beta(n,2) |u|^{n/(n-2)})}{(1 +|u|)^{n/(n-2)}} dx < +\infty. 
\tag{A${}^\R_{ue}$} 
\end{equation}
Moreover, the power $n/(n-2)$ in the denominator of \eqref{eq:AdamsexactRn} is sharp in the sense that the supremum above will become infinite if we replace the power in the denominator by any $p < n/(n-2)$. In applications, the inequality \eqref{eq:AdamsexactRn} implies a subcritical sharp Adams inequality in the spirit of Adachi and Tanaka, which strengthens an inequality of Ogawa and Ozawa \cite{OO1991}. It also implies a sharp Adams-type inequality under the norm
\[
\|u\|_{W^{2,n/2}} = \big(\|u\|_{L^{n/2}(\R^n)}^{n/2} + \|\Delta u\|_{L^{n/2}(\R^n)}^{n/2} \big)^{2/n},
\] 
namely
\begin{equation}\label{eq:Adamssharpstrong}
\sup\limits_{\begin{subarray}{c} 
u \in W^{2,n/2}(\R^n):\\
 \|u\|_{W^{2,n/2}} \leqslant 1
\end{subarray}} 
\int_{\R^n} \Phi_{n,2} \big(\beta(n,2) |u|^{n/(n-2)} \big) dx < +\infty.
\tag{A${}^\R_{uc}$} 
\end{equation}
The constant $\beta(n,2)$ is sharp; see \cite{LTZ2015,MS2014} for more details. A version of higher order derivatives of \eqref{eq:Adamssharpstrong} has recently been proved by Fontana and Morpurgo in \cite{FM2015}. We remark that a version of higher order derivatives of \eqref{eq:exactMT} and \eqref{eq:AdamsexactRn} is still unknown; however, a weaker result can be found in \cite{FM2015}.

\subsection{Moser--Trudinger and Adams inequalities on $\mathbb H^n$}

Although there have been extensive works on the best constants for the Moser--Trudinger and Adams inequalities in the Euclidean space, on Heisenberg's group, and on compact Riemannian manifolds as listed above, much less is known for the sharp constants for the Moser--Trudinger and Adams inequalities on hyperbolic spaces. 

The hyperbolic space $\H^n$ with $n\geqslant 2$ is a complete, simply connected Riemannian manifold having constant sectional curvature equal to $-1$, and for a given dimensional number, any two such spaces are isometric \cite{Wolf1967}. There is a number of models for $\H^n$, however, the most important models are the half-space model, the ball model, and the hyperboloid (or Lorentz model). In this paper, we will use the ball model since this model is especially useful for questions involving rotational symmetry. Given $n \geqslant 2$, we denote by $\mathbb B^n$ the open unit ball in $\R^n$. Clearly, $\mathbb B^n$ can be endowed with the Riemannian metric
\[
g(x) = \sum_{i=1}^n \Big(\frac 2{1-|x|^2}\Big)^2 dx_i^2,
\]
which is called the ball model of the hyperbolic space $\H^n$. The volume element of $\H^n$ is given by 
\[
dV_g(x) = \Big(\frac2{1-|x|^2}\Big)^n dx,
\]
where $dx$ denotes the Lebesgue measure in $\R^n$. For any subset $E\subset \mathbb B^n$, we denote $|E|= \int_E dV_g$. Let $d(0,x)$ denote the hyperbolic distance between the origin and $x$. It is well-known that
$$d(0,x) = \log \big( (1+|x|)/(1-|x|) \big)$$
for arbitrary $x\in \mathbb B^n$. In this new context, we still use $\nabla$ and $\Delta$ to denote the Euclidean gradient and Laplacian as well as $\langle\cdot ,\cdot\rangle$ to denote the standard inner product in $\R^n$. Then, in terms of $\nabla$, $\Delta$, and $\langle\cdot ,\cdot\rangle$, the hyperbolic gradient $\nabla_g$ and the Laplace--Beltrami operator $\Delta_g$ are given by
\[
\nabla_g = \Big(\frac{1-|x|^2}2\Big)^2 \nabla,\quad \Delta_g = \Big(\frac{1-|x|^2}2\Big)^2 \Delta + (n-2) \frac{1-|x|^2}2 \langle x,\nabla\rangle.
\]
Given a bounded domain $\Omega\subset \H^n$, we denote
\[
\|f\|_{p,\Omega} = \Big(\int_\Omega |f|^p dV_g \Big)^{1/p}
\]
for each $1\leqslant p < \infty$. Then we have the following
\[
\|\nabla_g f\|_{n,\Omega} = \Big(\int_\Omega\langle \nabla_g f,\nabla_g f\rangle_g^{n/2} dV_g\Big)^{1/n} = \Big(\int_\Omega |\nabla f|^n dx\Big)^{1/n}.
\]
In the case $\Omega = \H^n$, we simply write $\|f\|_p$ instead of $\|f\|_{p, \H^n}$ for all $1\leqslant p< +\infty$. Throughout the paper, we also use $W_0^{2,n/2}(\Omega)$ to denote the completion of $C_0^\infty(\Omega)$ under the norm
\[
\|u\|_{W^{2,n/2}_0(\Omega)} = \Big(\int_\Omega |u|^{n/2} dV_g + \int_\Omega |\Delta_g u|^{n/2} dV_g\Big)^{2/n}.
\]
In particular, we will denote by $W^{2,n/2}(\H^n)$ the completion of $C_0^\infty(\H^n)$ under the norm
\[
\|u\|_{W^{2,n/2}(\H^n)} = \Big(\int_{\H^n} |u|^{n/2} dV_g + \int_{\H^n} |\Delta_g u|^{n/2} dV_g\Big)^{2/n}.
\] 

\subsubsection{Moser--Trudinger inequalities on $\H^n$}

In \cite{MS2010}, Mancini and Sandeep established a sharp Moser--Trudinger inequality on the $2$-dimensional hyperbolic space $\mathbb H^2$. They proved
\begin{equation*}\label{eq:ManciniSandeep}
\sup\limits_{\begin{subarray}{c} 
u\in C_0^\infty(\mathbb H^2):\\
\|\nabla_g u\|_2 \leqslant 1
\end{subarray}} 
\int_{\mathbb H^2} (e^{4\pi u^2} -1) dV_g 
< +\infty,
\end{equation*}
where the constant $4\pi^2$ is sharp in the sense that the supremum above will be infinite if $4\pi^2$ is replaced by any number larger than $4\pi^2$. 

The Moser--Trudinger inequality on bounded domains $\Omega$ in any hyperbolic space of any higher dimension was proved by Lu and Tang \cite{LT2013}
\begin{equation}\label{eq:LuTangboundeddomain}
\sup\limits_{\begin{subarray}{c} 
u\in C_0^\infty(\Omega) :\\
\|\nabla_g\|_{n,\Omega} \leqslant 1
\end{subarray}}
 \int_{\Omega} \exp(\alpha_n |u|^{n/(n-1)}) dx 
< +\infty \tag{MT${}^{\H}_{b}$}
\end{equation}
with the sharp constant $\alpha_n$. We note that the best constant in the Moser--Trudinger inequality on bounded domains in hyperbolic space \eqref{eq:LuTangboundeddomain} is similar to the one of the Moser--Trudinger inequality on bounded domains in the Euclidean space \eqref{eq:Moser--Trudinger}.

When $\Omega$ has \textit{infinite volume}, a sharp ``subcritical'' Moser--Trudinger-type inequality in the spirit of Adachi--Tanaka was recently proved by Lu and Tang in \cite{LT2013}. They showed that 
\begin{equation}\label{eq:subcriticalMThyperbolic}
\sup\limits_{\begin{subarray}{c} 
u \in W^{1,n}(\H^n)\setminus\{0\}: \\
\|\nabla_g u\|_n \leqslant 1
\end{subarray}}
\frac1{\|u\|_n^n}\int_{\H^n} \Phi_{n,1}(\alpha |u|^{n/(n-1)}) dV_g 
< + \infty, 
\tag{MT${}^{\H}_{us}$}
\end{equation} 
for any $\alpha \in (0,\alpha_n)$ and the constant $\alpha_n$ is sharp in the sense that for $\alpha \geqslant \alpha_n$, the supremum in \eqref{eq:subcriticalMThyperbolic} will be infinite.

It was also established in \cite{LT2013} a sharp ``critical'' Moser--Trudinger inequality on the entire hyperbolic space when we restrict the norms of functions to the full hyperbolic Sobolev norm, namely,
\begin{equation}\label{eq:criticalMThyperbolic}
\sup\limits_{\begin{subarray}{c} 
u\in W^{1,n}(\H^n):\\
 \|\nabla_g u\|_n^n + \tau\|u\|_n^n \leqslant 1
\end{subarray}} 
\int_{\H^n} \Phi_n(\alpha_n |u|^{n/(n-1)} ) dV_g 
< + \infty 
\tag{MT${}^{\H}_{uc}$}
\end{equation}
for any $\tau > 0$. The constant $\alpha_n$ is sharp in the sense that the supremum above will become infinite if $\alpha_n$ is replaced by any $\alpha > \alpha_n$. In view of \eqref{eq:subcriticalMThyperbolic} and \eqref{eq:criticalMThyperbolic}, a natural question, as in the Euclidean space, arises: \textit{Can we still achieve the best constant $\alpha_n$ when we only require the restriction on the norm $\|\nabla_g u\|_n \leqslant 1$?} This question was also answered in \cite{LT2015} by Lu and Tang. They proved a sharp Moser--Trudinger inequality with exact growth condition in hyperbolic space as follows
\begin{equation}\label{eq:exactMThyperbolic}
\sup\limits_{\begin{subarray}{c} 
u\in W^{1,n}(\H^n)\setminus\{0\}:\\
 \|\nabla_g u\|_n\leqslant 1
\end{subarray}} 
\frac1{\|u\|_n^n}\int_{H^n} \frac{\Phi_{n,1}(\alpha_n |u|^{n/(n-1)})}{(1 + |u|)^{n/(n-1)}} dV_g < +\infty. 
\tag{MT${}^{\H}_{ue}$}
\end{equation}
In \eqref{eq:exactMThyperbolic}, the power $n/(n-1)$ in the denominator of \eqref{eq:exactMThyperbolic} is sharp in the sense that the supremum becomes infinite if we replace the power $n/(n-1)$ in the denominator by any $ p < n/(n-1)$. It is evidence that \eqref{eq:exactMThyperbolic} implies \eqref{eq:subcriticalMThyperbolic} and \eqref{eq:criticalMThyperbolic}.

\subsubsection{Adams inequalities on $\H^n$}

A Moser--Trudinger-type inequality of higher order derivatives, or Adams-type inequality, in hyperbolic spaces was recently established in \cite{FM2015,KS2016}. In \cite{KS2016}, Karmakar and Sandeep proved a sharp Adams-type inequality in $\H^n$ with even $n$ under the condition
\[
\int_{H^n} u P_{n/2}^g( u) \, dV_g \leqslant 1,
\] 
where $P_k^g$ denotes the $(2k)$th order GJMS operator defined by
\[\left\{
\begin{split}
P_1^g =& -\Delta_g - n(n-2)/4,\\
P_k^g =& P_1^g(P_1^g +2) (P_1^g+6)\cdots (P_1^g + k(k-1)).
\end{split}
\right.\] 
More precisely, they established the following inequality
\begin{equation}\label{eq:KarmakarSandeep2016}
\sup\limits_{\begin{subarray}{c} 
u \in C_0^\infty(\H^n):\\
 \int_{\H^n} u P_{n/2}^g (u) \, dV_g \leqslant 1
\end{subarray}} 
\int_{\H^n} (e^{\beta(n,n/2) u^2} -1) dV_g < +\infty. 
\tag{A${}^{\H 2}_{}$}
\end{equation}
The constant $\beta(n,n/2)$ in \eqref{eq:KarmakarSandeep2016} is sharp and cannot be improved. For any integer $m$ less than $n$, let us denote
\begin{equation*}
\nabla_g^m = 
\begin{cases}
\Delta_g^{m/2}&\mbox{if $k$ even,}\\
\nabla_g\Delta_g^{(m-1)/2}&\mbox{if $k$ odd.}
\end{cases}
\end{equation*} 
In \cite{FM2015}, Fontana and Morpurgo established the following sharp Adams inequality in the entire hyperbolic space $\H^n$ as follows
\begin{equation}\label{eq:FontanaMorpurgo2015}
\sup\limits_{\begin{subarray}{c} 
u\in C_c^\infty(\H^n):\\
 \|\nabla_g^m u\|_{n/m}\leqslant 1
\end{subarray}} 
\int_{\H^n} \Phi_{n,m}(\beta(n,m) |u|^{n/(n-m)}) dV_g < +\infty. 
\tag{A${}^{\H}_{u}$}
\end{equation}
The constant $\beta(n,m)$ is again sharp in the sense that the supremum in \eqref{eq:FontanaMorpurgo2015} will become infinite if we replace $\beta(n,m)$ by any $\beta > \beta(n,m)$.

Motivated by \eqref{eq:AdamsexactRn}, in the recent paper \cite{Kar2015}, Karmakar established a sharp Adams-type inequality in $\H^4$ with the exact growth condition as follows
\begin{equation}\label{eq:Karmakar2015}
\sup\limits_{\begin{subarray}{c} 
u \in W^{2,2}(\H^4)\setminus\{0\}:\\
 \int_{\H^4} u P_2^g(u) dV_g \leqslant 1
\end{subarray}} 
\frac1{\|u\|_2^2}
\int_{\H^4} \frac{e^{32\pi^2 u^2} -1}{(1+|u|)^2} dV_g < +\infty . 
\tag{A${}^{\H}_{ue}$}
\end{equation}
Moreover, this inequality is sharp in the sense that the supremum in \eqref{eq:Karmakar2015} will become infinite if the power $2$ in the denominator of \eqref{eq:Karmakar2015} is replaced by any $p < 2$.

\subsection{Main results}

As far as we know, no sharp Adams-type inequality with exact growth condition for general $n \geqslant 3$ is known. In the first part of this paper, as an analog of \eqref{eq:AdamsexactRn}, we will provide a sharp Adams-type inequality with exact growth condition in $\H^n$ for all $n\geqslant 3$ under the norm $\|\Delta_g u\|_{n/2}$. The exact statement of this result is as follows.

\begin{theorem}\label{Maintheorem}
There exists a dimensional constant $C(n) > 0$ such that for all $u\in W^{2,n/2}(\H^n)$ with $\|\Delta_g u\|_{n/2} \leqslant 1$ there holds
\begin{equation}\label{eq:Adamsexact}
\int_{\H^n} \frac{\Phi_{n,2}(\beta(n,2) |u|^{n/(n-2)})}{(1+|u|)^{n/(n-2)}} dV_g \leqslant C(n)  \|u\|_{n/2}^{n/2}.
\tag{AMT${}^{\H}_{ue}$}
\end{equation}
Moreover, this inequality is sharp in the sense that the supremum
\[
\sup\limits_{\begin{subarray}{c} 
u\in W^{2,n/2}(\H^n)\setminus\{0\} :\\
 \|\Delta_g u\|_{n/2} \leqslant 1
\end{subarray}} 
\frac1{\|u\|_{n/2}^{n/2}}\int_{\H^n} \frac{\Phi_{n,2}(\beta |u|^{n/(n-2)})}{(1+|u|)^p} dV_g
\]
becomes infinite either for $\beta > \beta(n,2)$ and $p=n/(n-2)$, or $\beta =\beta(n,2)$ and $p <n/(n-2)$.
\end{theorem}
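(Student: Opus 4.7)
The plan is to prove Theorem \ref{Maintheorem} by reducing to radial decreasing functions through hyperbolic rearrangement, splitting the target integral into a sub–critical piece and a concentrated piece, and then comparing with the Euclidean exact–growth Adams inequality \eqref{eq:AdamsexactRn}. Since the left–hand side of \eqref{eq:Adamsexact} and the quantities $\|u\|_{n/2}$ depend only on the distribution function of $|u|$, the natural first step is to replace $|u|$ by its hyperbolic symmetric decreasing rearrangement $u^\sharp$, which is radial and non–increasing in the hyperbolic distance $\rho=d(0,x)$. A true Pólya–Szegő inequality for $\Delta_g$ is not available at second order, so I would follow the standard substitute used already in the Euclidean case: introduce the radial auxiliary function $v$ defined by $-\Delta_g v=(-\Delta_g u)^\sharp$ with the appropriate decay at infinity. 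One can show that $v\ge u^\sharp\ge 0$ pointwise and $\|\Delta_g v\|_{n/2}=\|\Delta_g u\|_{n/2}\le 1$, while $\|u\|_{n/2}\le\|v\|_{n/2}$ and the integrand on the left of \eqref{eq:Adamsexact} is monotone in $|u|$, so after this reduction it suffices to prove the inequality for the radial, non–negative $v$.

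Next I would write out the radial hyperbolic biharmonic problem as an ODE in $\rho$ and derive an integral representation for $v(\rho)$ in terms of $\Delta_g v$ via the one–dimensional Green function, tracking the measure $(\sinh\rho)^{n-1}d\rho$. I would then split the target integral as
\begin{equation*}
\int_{\H^n}\frac{\Phi_{n,2}(\beta(n,2)|v|^{n/(n-2)})}{(1+|v|)^{n/(n-2)}}dV_g
 =\int_{\{v\le 1\}}(\cdots)\,dV_g+\int_{\{v> 1\}}(\cdots)\,dV_g.
\end{equation*}
On $\{v\le 1\}$ the factor $\Phi_{n,2}$ vanishes to order $j_{n/2}\ge n/2$ at the origin and the denominator is bounded below, so the integrand is controlled by a constant multiple of $v^{n/2}$ and this piece is bounded by $C_n\|v\|_{n/2}^{n/2}$. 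The set $\{v>1\}$ is a hyperbolic ball $B_R$ around the origin whose radius is controlled by $\|\Delta_g v\|_{n/2}$ via a pointwise bound on $v$; on this bounded set I would pull back to Euclidean data, absorbing the conformal factor $\bigl(\tfrac{2}{1-|x|^2}\bigr)^n$ and the drift term $(n-2)\tfrac{1-|x|^2}{2}\langle x,\nabla\rangle$ as bounded perturbations, and invoke the sharp Euclidean inequality \eqref{eq:AdamsexactRn} (together with the subcritical inequalities it implies) to obtain the right–hand side $C_n\|v\|_{n/2}^{n/2}$.

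For the sharpness assertions I would use two rescaled Moser–Adams test families concentrated at the origin. Against such a family, the contribution of the concentration core $\{|u|\ge 1\}$ behaves exactly as in the Euclidean Masmoudi–Sani / Lu–Tang–Zhu analysis because of the conformal identification of the hyperbolic and Euclidean Laplacians at small scales: taking $\beta>\beta(n,2)$ with $p=n/(n-2)$ makes the core contribution blow up faster than $\|u\|_{n/2}^{n/2}$, while keeping $\beta=\beta(n,2)$ and lowering the denominator exponent to $p<n/(n-2)$ forces the "plateau" part of the Moser sequence to dominate and produce an unbounded ratio.

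The principal obstacle will be the second–order comparison step — the substitute for a Pólya–Szegő inequality in the biharmonic hyperbolic setting — because the drift term arising from negative curvature interacts non–trivially with symmetric rearrangement. A secondary but delicate difficulty is transporting the Euclidean exact–growth estimate to the hyperbolic ball with uniform control of the curvature correction terms on $\{v>1\}$, since the exact–growth denominator $(1+|v|)^{n/(n-2)}$ is scale–critical and does not tolerate any polynomial loss in the estimates.
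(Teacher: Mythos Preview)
Your reduction step contains a genuine logical gap. You introduce $v$ via $-\Delta_g v=(-\Delta_g u)^\sharp$, observe that $v\ge u^\sharp$ and $\|u\|_{n/2}\le\|v\|_{n/2}$, and conclude that ``it suffices to prove the inequality for $v$''. But proving
\[
\int_{\H^n}\frac{\Phi_{n,2}(\beta(n,2)|v|^{n/(n-2)})}{(1+|v|)^{n/(n-2)}}\,dV_g\le C_n\|v\|_{n/2}^{n/2}
\]
together with $v\ge u^\sharp$ only yields the left-hand side for $u$ bounded by $C_n\|v\|_{n/2}^{n/2}$, and since $\|v\|_{n/2}\ge\|u\|_{n/2}$ this is the \emph{wrong direction}: you cannot recover $C_n\|u\|_{n/2}^{n/2}$ on the right. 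There is no reason for $\|v\|_{n/2}$ and $\|u\|_{n/2}$ to be comparable, because $v$ depends only on the distribution of $\Delta_g u$, while $\|u\|_{n/2}$ can be made arbitrarily small relative to $\|\Delta_g u\|_{n/2}$ by choosing $u$ oscillatory. The paper avoids this trap by never replacing $u^\sharp$ with $v$: instead it proves a Talenti-type \emph{increment} estimate (Proposition~\ref{estimateforsolution})
\[
u^*(t_1)-u^*(t_2)\le\frac{1}{(n\Omega_n^{1/n})^2}\int_{t_1}^{t_2}\frac{f^{**}(s)}{s^{1-2/n}}\,ds,\qquad f=-\Delta_g u,
\]
and works directly with $u^*$ throughout. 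The right-hand side $\|u\|_{n/2}^{n/2}$ then enters naturally via Lemma~\ref{crucial1}, which converts the increment bound plus $\int_A^\infty (f^{**})^{n/2}\le(n/(n-2))^{n/2}$ into the exact-growth estimate with $\int_A^\infty (u^*)^{n/2}$ on the right.

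Your plan to ``pull back to Euclidean data'' on $\{v>1\}$ and invoke \eqref{eq:AdamsexactRn} is also problematic at the sharp-constant level. The drift term $(n-2)\tfrac{1-|x|^2}{2}\langle x,\nabla\rangle$ and the conformal factor are indeed bounded on a fixed hyperbolic ball, but an additive $O(1)$ perturbation of $\Delta_g u$ is not harmless here: after the exponentiation it would force you below $\beta(n,2)$, and as you yourself note the denominator $(1+|v|)^{n/(n-2)}$ leaves no room for loss. The paper's route sidesteps this entirely: the hyperbolic isoperimetric inequality gives $\sinh^{n-1}F(t)\ge(t/\Omega_n)^{1-1/n}$, which makes the one-dimensional increment bound \emph{identical} to the Euclidean one, so the sharp constant survives without any curvature correction to manage. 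From there the proof follows the Masmoudi--Sani/Lu--Tang--Zhu scheme (splitting at $R_0$ where $u^*=1$, a further $\epsilon_0$-split at $R_1$, Adams' lemma for one piece and the sequence lemma~\ref{crucial2} for the other) entirely at the level of the decreasing rearrangement.

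Your sharpness argument is essentially correct and matches the paper's: the Moser--Adams test sequence $v_m$, rescaled to $\tilde v_m(x)=v_m(3x)$ so that the support sits where the hyperbolic and Euclidean metrics are uniformly comparable, gives $\|\Delta_g\tilde v_m\|_{n/2}^{n/2}=1+O((\ln m)^{-1})$ and $\|\tilde v_m\|_{n/2}^{n/2}=O((\ln m)^{-1})$, from which both optimality claims follow by direct computation.
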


Notice that our inequality \eqref{eq:Adamsexact} when $n=4$ is slightly different from \eqref{eq:Karmakar2015} of Karmakar. To prove Theorem \ref{Maintheorem}, we borrow some ideas in the proof of \eqref{eq:AdamsexactRn} given in \cite{LTZ2015,MS2014} plus some useful inequalities involving the decreasing rearrangement given in Section \ref{sec-Preliminaries}. 

Let us now discuss some interesting consequences of Theorem \ref{Maintheorem}. An immediate consequence of it is the following subcritical sharp Adams-type inequality in the spirit of Adachi and Tanaka in $W^{2,n/2}(\H^n)$.

\begin{theorem}\label{AdachiTanakatype}
For any $\alpha \in (0,\beta(n,2))$, there exists a constant $C(n,\alpha) >0$ such that
\begin{equation}\label{eq:AdachiTanakatype}
\int_{\H^n} \Phi_{n,2}(\alpha |u|^{n/(n-2)}) dV_g \leqslant C(n,\alpha)\|u\|_{n/2}^{n/2}
\tag{AMT${}^{\H}_{us}$}
\end{equation}
for any function $u \in W^{2,n/2}(\H^n)$ with $\|\Delta_g u\|_{n/2} \leqslant 1$. The inequality is sharp in the sense that it is false if $\alpha \geqslant \beta(n,2)$. Furthermore, we have the following estimate
\begin{equation}\label{eq:Cnalpha}
C(n,\alpha) \leqslant \frac{C(n)}{\beta(n,2) -\alpha},
\end{equation}
for some positive constant $C(n)$ depending only on $n$.
\end{theorem}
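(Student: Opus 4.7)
The plan is to deduce Theorem~\ref{AdachiTanakatype} directly from the exact growth estimate of Theorem~\ref{Maintheorem} by means of a single pointwise comparison between $\Phi_{n,2}(\alpha t)$ and $\Phi_{n,2}(\beta(n,2)t)$. The crucial observation is the elementary bound
\[
e^{(\beta(n,2)-\alpha)t}\,\Phi_{n,2}(\alpha t)\,\leq\,\Phi_{n,2}(\beta(n,2)t)\qquad\text{for all } t\geq 0,
\]
which I would establish by forming the Cauchy product of the Taylor series $e^{(\beta(n,2)-\alpha)t}=\sum_{k\ge 0}((\beta(n,2)-\alpha)t)^k/k!$ and $\Phi_{n,2}(\alpha t)=\sum_{j\ge j_{n/2}-1}(\alpha t)^j/j!$, collecting like powers of $t$, and invoking the binomial identity $\sum_{j=0}^{m}\binom{m}{j}\alpha^{j}(\beta(n,2)-\alpha)^{m-j}=\beta(n,2)^{m}$. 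The result equals $\Phi_{n,2}(\beta(n,2)t)$ minus a power series in $t$ with manifestly nonnegative coefficients (those corresponding to indices $0\le j\le j_{n/2}-2$ that were truncated out of $\Phi_{n,2}(\alpha t)$), which gives the inequality.

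Substituting $t=|u|^{n/(n-2)}$ and multiplying and dividing by $(1+|u|)^{n/(n-2)}$, I would obtain the pointwise estimate
\[
\Phi_{n,2}(\alpha|u|^{n/(n-2)})\,\leq\,K(n,\alpha)\,\frac{\Phi_{n,2}(\beta(n,2)|u|^{n/(n-2)})}{(1+|u|)^{n/(n-2)}},
\]
where $K(n,\alpha)=\sup_{t\geq 0}\bigl\{e^{-(\beta(n,2)-\alpha)t^{n/(n-2)}}(1+t)^{n/(n-2)}\bigr\}$. Integrating over $\H^n$ and applying Theorem~\ref{Maintheorem} to the right-hand side immediately yields \eqref{eq:AdachiTanakatype} with $C(n,\alpha)\leq C_n K(n,\alpha)$. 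To obtain the quantitative bound \eqref{eq:Cnalpha}, I would set $s=t^{n/(n-2)}$, use $(1+s^{(n-2)/n})^{n/(n-2)}\leq 2^{n/(n-2)-1}(1+s)$, and apply the elementary identity $\max_{s\geq 0}(1+s)e^{-\delta s}=e^{\delta-1}/\delta$ valid for $\delta\in(0,1)$ to conclude $K(n,\alpha)\leq C(n)/(\beta(n,2)-\alpha)$ uniformly in $\alpha\in(0,\beta(n,2))$ (a trivial $O(1)$ bound handles the range where $\beta(n,2)-\alpha$ is not small).

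For the sharpness claim at $\alpha\geq\beta(n,2)$, I would test \eqref{eq:AdachiTanakatype} against a family of Adams--Moser-type concentrating functions $u_\ell$ on $\H^n$ --- for instance, radial truncations of the bubbles used in the sharpness analysis of \eqref{eq:FontanaMorpurgo2015}, or directly the extremizing sequences already employed to establish optimality of Theorem~\ref{Maintheorem}. By the standard Adams concentration computation, such $u_\ell$ satisfy $\|\Delta_g u_\ell\|_{n/2}\le 1$ and $\|u_\ell\|_{n/2}\to 0$, while the $\Phi_{n,2}$-integral with exponent $\beta(n,2)$ stays bounded below by a positive constant, forcing the ratio in \eqref{eq:AdachiTanakatype} to blow up.

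The only step that requires genuine care is the Taylor-series comparison itself: it is essential that the leading constant there is exactly $1$ (not some unspecified absolute constant depending on $n$), since this is precisely what produces the linear blow-up $1/(\beta(n,2)-\alpha)$ in \eqref{eq:Cnalpha}, rather than the weaker $1/(\beta(n,2)-\alpha)^p$ with $p>1$ that would result from a naive splitting into level sets $\{|u|\le M\}$ and $\{|u|>M\}$ followed by a term-by-term estimate on the power series of $\Phi_{n,2}$. Once the clean series identity above is in hand, the remainder of the argument is a change of variable and a direct appeal to Theorem~\ref{Maintheorem}.
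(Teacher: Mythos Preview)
Your proposal is correct, and the overall strategy---deduce \eqref{eq:AdachiTanakatype} from Theorem~\ref{Maintheorem} via a pointwise comparison that converts the gap $\beta(n,2)-\alpha$ into the factor $1/(\beta(n,2)-\alpha)$---is the same as the paper's. The difference lies in how the pointwise comparison is carried out. The paper splits $\H^n$ into $\{|u|\leqslant 1\}$ and $\{|u|>1\}$: on the first region it bounds $\Phi_{n,2}(\alpha|u|^{n/(n-2)})$ by $e^{\beta(n,2)}|u|^{n/2}$ directly; on the second it replaces $\Phi_{n,2}$ by the full exponential, writes $\exp(\alpha|u|^{n/(n-2)})=\exp(\beta(n,2)|u|^{n/(n-2)})\exp(-(\beta(n,2)-\alpha)|u|^{n/(n-2)})$, and invokes the elementary bound $e^{-t}\leqslant (et)^{-1}$ to extract the $1/(\beta(n,2)-\alpha)$ factor before appealing to Theorem~\ref{Maintheorem}. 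Your Taylor-series identity $e^{(\beta(n,2)-\alpha)t}\Phi_{n,2}(\alpha t)\leqslant \Phi_{n,2}(\beta(n,2)t)$ is a cleaner substitute: it works for all $t\geqslant 0$ and keeps $\Phi_{n,2}$ on both sides, so no region splitting is needed and the argument becomes a single chain of inequalities. Both routes give exactly the linear blow-up $C(n)/(\beta(n,2)-\alpha)$; yours is a bit more streamlined, the paper's avoids the (easy but not completely trivial) Cauchy-product computation. For sharpness, both you and the paper defer to the test sequence already built in the optimality proof of Theorem~\ref{Maintheorem}.
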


Clealry, the estimate \eqref{eq:Cnalpha} provides an asymptotic behavior of the constant $C(n,\alpha)$ in the subcritical inequality \eqref{eq:AdachiTanakatype} as $\alpha$ tends to $\beta(n,2)$. Such a result on the Euclidean space can be found in \cite{LTZ2015} for the Moser--Trudinger and Adams inequalities. 

In view of Theorem \ref{AdachiTanakatype}, it is easy to obtain a critical sharp Adams-type inequality in $W^{2,n/2}(\H^n)$ involving the norm
\[
\|u\|_{W^{2,n/2},\tau} =\big(\|\Delta_g u\|_{n/2}^{n/2} + \tau \|u\|_{n/2}^{n/2}\big)^{2/n}
\]
where $\tau >0$. This is the content of the following result.

\begin{theorem}\label{Adamsnormtau}
Let $\tau >0$, there exist a constant $C(n,\tau) > 0$ such that
\begin{equation}\label{eq:Adamsnormtau}
\sup\limits_{u: \|u\|_{W^{2,n/2},\tau} \leqslant 1} \int_{\H^n} \Phi_{n,2}(\beta(n,2) |u|^{n/(n-2)}) dV_g \leqslant C(n,\tau).
\tag{AMT${}^{\H}_{uc}$}
\end{equation}
The constant $\beta(n,2)$ is sharp in the sense that the supremum becomes infinite if we replace $\beta(n,2)$ by any $\beta > \beta(n,2)$. Furthermore, we have the following estimate
\begin{equation}\label{eq:Cntau}
C(n,\tau) \leqslant C(n)/\tau,
\end{equation}
for some positive constant $C(n)$ depending only on $n$.
\end{theorem}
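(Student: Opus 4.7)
The plan is to deduce Theorem \ref{Adamsnormtau} from the subcritical Adams inequality Theorem \ref{AdachiTanakatype}, exploiting the quantitative bound \eqref{eq:Cnalpha} on $C(n,\alpha)$ together with a two-case scaling argument. Given $u$ with $\|u\|_{W^{2,n/2},\tau} \leq 1$, I set $a := \|\Delta_g u\|_{n/2}^{n/2}$ and $b := \tau \|u\|_{n/2}^{n/2}$, so that $a + b \leq 1$; the case $u \equiv 0$ is trivial, so we may assume $a, b > 0$.

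In the near-critical regime $a \geq 1/2$, I normalize $v := u/a^{2/n}$, so that $\|\Delta_g v\|_{n/2} = 1$. The choice $\alpha := \beta(n,2)\,a^{2/(n-2)} < \beta(n,2)$ is engineered so that $\alpha |v|^{n/(n-2)} = \beta(n,2)|u|^{n/(n-2)}$; applying Theorem \ref{AdachiTanakatype} together with \eqref{eq:Cnalpha} yields
$$
\int_{\H^n} \Phi_{n,2}\big(\beta(n,2)|u|^{n/(n-2)}\big)\, dV_g \leq \frac{C(n)}{\beta(n,2)\big(1 - a^{2/(n-2)}\big)} \cdot \frac{b}{\tau a}.
$$
Combining the elementary inequality $1 - a^p \geq \min(1,p)\,(1-a)$ (valid for $a \in [0,1]$, $p>0$) with $1 - a \geq b$ and $a \geq 1/2$ bounds the right-hand side by $C(n)/\tau$.

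In the subcritical regime $a < 1/2$, I instead rescale by $w := 2^{2/n} u$, so that $\|\Delta_g w\|_{n/2} \leq 1$ and $\beta(n,2)|u|^{n/(n-2)} = \alpha_0 |w|^{n/(n-2)}$ with $\alpha_0 := \beta(n,2)\,2^{-2/(n-2)}$, which is a fixed $n$-dependent constant strictly less than $\beta(n,2)$. Applying Theorem \ref{AdachiTanakatype} to $w$ with level $\alpha_0$ and using $\|w\|_{n/2}^{n/2} = 2\|u\|_{n/2}^{n/2} \leq 2/\tau$ again produces a bound of the form $C(n)/\tau$. Combining the two regimes gives \eqref{eq:Cntau}.

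For sharpness of $\beta(n,2)$, I use a Moser-type concentrating sequence $(u_k)$ at a fixed point of $\H^n$, normalized so that $\|\Delta_g u_k\|_{n/2} = 1$ and $\|u_k\|_{n/2} \to 0$; this is the same family witnessing sharpness in \eqref{eq:FontanaMorpurgo2015}. Such a sequence satisfies $\|u_k\|_{W^{2,n/2},\tau} \to 1$, so after a harmless rescaling it meets the constraint of Theorem \ref{Adamsnormtau}, yet forces $\int_{\H^n} \Phi_{n,2}(\beta |u_k|^{n/(n-2)}) dV_g \to \infty$ whenever $\beta > \beta(n,2)$. The main technical obstacle is controlling the constants uniformly across the two regimes: the singularity of $1/(1-a^{2/(n-2)})$ as $a \to 1$ is absorbed by the factor $1-a \geq b$ appearing in the numerator, whereas the blow-up of $1/a$ as $a \to 0$ is circumvented by switching to the uniform $w$-rescaling in the subcritical case.
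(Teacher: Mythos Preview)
Your proof is correct and follows essentially the same route as the paper: both derive the result from Theorem~\ref{AdachiTanakatype} via a two-case scaling argument, normalizing $u$ by $\|\Delta_g u\|_{n/2}$ in the near-critical regime and by a fixed factor in the subcritical regime, with only cosmetic differences (you split at $\|\Delta_g u\|_{n/2}^{n/2}=1/2$ and rescale by $2^{2/n}$, the paper splits at $\|\Delta_g u\|_{n/2}=1/2$ and rescales by $2$). Your sharpness argument via the concentrating sequence is slightly more direct than the paper's, which instead invokes the equivalence of Theorems~\ref{AdachiTanakatype} and~\ref{Adamsnormtau}, but both rely on the same underlying Moser-type family.
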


In the next part of our paper, we also prove that Theorem \ref{AdachiTanakatype} can imply an improved version of the sharp Adams inequality \eqref{eq:Adamsnormtau} in the spirit of Lions \cite{Lions1985}. To make this statement clear, we shall prove the following result.

\begin{theorem}\label{improvedversion}
There exists a constant $C(n) > 0$ such that for any $u\in W^{2,n/2}(\H^n)$ with $\|\Delta_g u\|_{n/2} < 1$, the following inequality holds
\begin{equation}\label{eq:Lionsversion}
\int_{\H^n} \Phi_{n,2}\bigg(\frac{2^{2/(n-2)} \beta(n,2)}{\big(1 + \|\Delta_g u\|_{n/2}^{n/2}\big)^{2/(n-2)}} |u|^{n/(n-2)} \bigg) dV_g \leqslant C(n) \frac{\|u\|_{n/2}^{n/2}}{1-\|\Delta_g u\|_{n/2}^{n/2}}.
\tag{AMT${}^{\H}_{ucL}$}
\end{equation}
Consequently, we have for any $\tau > 0$,
\begin{equation}\label{eq:improvement}
\sup\limits_{u: \|u\|_{W^{2,n/2},\tau}\leqslant 1} \int_{\H^n} \Phi_{n,2}\bigg(\frac{2^{2/(n-2)} \beta(n,2)}{\big(1 + \|\Delta_g u\|_{n/2}^{n/2}\big)^{2/(n-2)}} |u|^{n/(n-2)} \bigg) dV_g \leqslant \frac{C(n)}\tau.
\end{equation}
The constant $\beta(n,2)$ is sharp in the sense that inequality \eqref{eq:Lionsversion} does not hold if we replace $\beta(n,2)$ by any larger constant.
\end{theorem}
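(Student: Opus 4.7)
The strategy is to derive \eqref{eq:Lionsversion} from the critical Adams-type inequality \eqref{eq:Adamsnormtau} by a one-parameter dilation. Given $u\in W^{2,n/2}(\H^n)$ with $s:=\|\Delta_g u\|_{n/2}^{n/2}\in[0,1)$ (the case $u\equiv 0$ being trivial), I will set $v:=u/t$, where $t>0$ is chosen so that two conditions hold simultaneously: (a) the argument $\beta(n,2)\,|v|^{n/(n-2)}$ coming from \eqref{eq:Adamsnormtau} coincides with the improved exponent $\frac{2^{2/(n-2)}\beta(n,2)}{(1+s)^{2/(n-2)}}|u|^{n/(n-2)}$ of \eqref{eq:Lionsversion}, and (b) some $\tau>0$ makes $\|v\|_{W^{2,n/2},\tau}=1$. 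Requirement (a) forces $t^{n/2}=(1+s)/2$, hence $t=\big((1+s)/2\big)^{2/n}$.

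For this specific $t$ a direct computation gives
\[
\|\Delta_g v\|_{n/2}^{n/2} = \frac{2s}{1+s}, \qquad \|v\|_{n/2}^{n/2} = \frac{2\,\|u\|_{n/2}^{n/2}}{1+s}.
\]
Setting $\tau := (1-s)/\big((1+s)\|v\|_{n/2}^{n/2}\big)$ yields
\[
\|v\|_{W^{2,n/2},\tau}^{n/2} = \frac{2s}{1+s} + \frac{1-s}{1+s} = 1,
\]
so that Theorem \ref{Adamsnormtau}, combined with the quantitative estimate \eqref{eq:Cntau}, implies $\int_{\H^n}\Phi_{n,2}\big(\beta(n,2)|v|^{n/(n-2)}\big)\,dV_g \leq C(n)/\tau$. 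Unwinding the definitions of $v$ and $\tau$ one obtains
\[
\int_{\H^n}\Phi_{n,2}\!\left(\frac{2^{2/(n-2)}\beta(n,2)}{(1+s)^{2/(n-2)}}|u|^{n/(n-2)}\right) dV_g \leq \frac{2\,C(n)\,\|u\|_{n/2}^{n/2}}{1-s},
\]
which is \eqref{eq:Lionsversion} after renaming the constant.

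The consequence \eqref{eq:improvement} follows immediately: under the side condition $s + \tau\|u\|_{n/2}^{n/2}\leq 1$ one has $\|u\|_{n/2}^{n/2}/(1-s)\leq 1/\tau$, so the right-hand side of \eqref{eq:Lionsversion} is bounded by $C(n)/\tau$. For the sharpness of $\beta(n,2)$, I would adapt the standard Moser-type concentrating sequence in $\H^n$: choose $u_k$ supported near a fixed point of $\H^n$ with $s_k:=\|\Delta_g u_k\|_{n/2}^{n/2}\to 1^-$, so that the exponent $\frac{2^{2/(n-2)}\tilde\beta}{(1+s_k)^{2/(n-2)}}$ approaches $\tilde\beta>\beta(n,2)$; the known sharpness of $\beta(n,2)$ in the Fontana--Morpurgo inequality \eqref{eq:FontanaMorpurgo2015} then forces the left-hand side to blow up faster than the $L^{n/2}$-based right-hand side allows. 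The chief difficulty in the whole argument is identifying the correct dilation $t=((1+s)/2)^{2/n}$, which is the unique scaling that simultaneously produces the improved exponent and makes $v$ lie exactly on the unit sphere of the weighted Sobolev norm where \eqref{eq:Adamsnormtau} is sharp; once this is in hand the remaining manipulations are purely algebraic.
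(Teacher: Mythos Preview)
Your derivation of \eqref{eq:Lionsversion} is correct, but it follows a different route from the paper. The paper deduces \eqref{eq:Lionsversion} from the subcritical Adachi--Tanaka-type inequality (Theorem~\ref{AdachiTanakatype}) together with the blow-up rate \eqref{eq:Cnalpha}, and this forces a case distinction according to whether $\|\Delta_g u\|_{n/2}\leqslant 1/2$ or $\|\Delta_g u\|_{n/2}\geqslant 1/2$; in each case a different rescaling and a different subcritical exponent $\alpha<\beta(n,2)$ are used. You instead appeal to the critical inequality (Theorem~\ref{Adamsnormtau}) with the estimate \eqref{eq:Cntau}, and the single rescaling $v=u/t$ with $t^{n/2}=(1+s)/2$ lands $v$ exactly on the unit sphere of $\|\cdot\|_{W^{2,n/2},\tau}$ for a suitable $\tau>0$, with no case split needed. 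Since the paper itself shows (Subsection~\ref{subsect-ThmsAreEquivalent}) that Theorems~\ref{AdachiTanakatype} and~\ref{Adamsnormtau} are equivalent with comparable constants, the two routes are formally interchangeable; your single-scaling argument is the more economical of the two.

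Your sharpness sketch is correct in spirit but less direct than necessary. The cleanest way to argue is the one the paper uses: if \eqref{eq:Lionsversion} held with some $\tilde\beta>\beta(n,2)$, then since $2^{2/(n-2)}(1+s)^{-2/(n-2)}>1$ for $s<1$, one would obtain \eqref{eq:improvement} with $\tilde\beta$, hence \eqref{eq:Adamsnormtau} with $\tilde\beta$ (note that $\|u\|_{W^{2,n/2},\tau}\leqslant 1$ and $u\not\equiv 0$ force $s<1$), contradicting the sharpness of Theorem~\ref{Adamsnormtau}. This avoids having to track the competing growth rates of the two sides along a specific Moser-type sequence.
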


Notice that if $\|\Delta_g u\|_{n/2} < 1$, then $2^{2/(n-2)} \big(1 + \|\Delta_g u\|_{n/2}^{n/2}\big)^{-2/(n-2)} \beta(n,2) > \beta(n,2)$. Therefore \eqref{eq:improvement} is indeed an improvement of \eqref{eq:Adamsnormtau}. It is noted that the Euclidean versions of \eqref{eq:Lionsversion} and \eqref{eq:improvement} was recently proved by Lam, Lu and Tang in \cite[Theorem 1.5]{LamLuTang2016}. Their proofs are based on the domain decomposition method. Our proof below is different with theirs and is derived from Theorem \ref{AdachiTanakatype}. 

Despite the fact that Theorem \ref{Adamsnormtau} can be derived from Theorem \ref{AdachiTanakatype}, however, it turns out that these two theorems are in fact equivalent; see Section \ref{sec-VariousAdamsType} below. It seems very surprise since Theorem \ref{Adamsnormtau} concerns  the critical version of the sharp Adams-type inequality while Theorem \ref{AdachiTanakatype} concerns the subcritical version. In the Euclidean case, this fact was recently observed by Lam, Lu and Zhang in \cite{LamLuZhang2015}. Furthermore, it is evident that Theorem \ref{improvedversion} implies Theorem \ref{Adamsnormtau}. Hence, up to dimensional constants, the three inequalities \eqref{eq:AdachiTanakatype}, \eqref{eq:Adamsnormtau}, and \eqref{eq:Lionsversion} are equivalent.

We also establish a sharp Adams--Moser--Trudinger-type inequality in the Sobolev space with homogeneous Navier boundary condition $W_{N,g}^{m,n/m}(\Omega)$ for any bounded domain $\Omega \subset \H^n$. Here the space $W^{m,n/m}_{N,g}(\Omega)$ is defined by
\[
W_{N,g}^{m,n/m}(\Omega) =\Big\{u\in W^{m,n/m} (\Omega) :\, \Delta_g^j u = 0 \, \text{ on } \, \partial \Omega, \, j = 0,1, ... ,\big \lfloor m/2\big\rfloor\Big\}.
\]
Note that $W_{N,g}^{m,n/m}(\Omega)$ contains the Sobolev space $W_0^{m,n/m}(\Omega)$ as a closed subspace. Our next theorem is a hyperbolic analog of the result of Tarsi in the Euclidean space; see \cite[Theorem 4]{Tarsi2012}.

\begin{theorem}\label{Tarsihyperbolic}
Let $n > 2$ and $\Omega$ be a bounded domain in $\H^n$. There exists a constant $C(n) > 0$ such that for any integer $m\in [1,n)$ and for all $u\in W_{N,g}^{m,n/m}(\Omega)$ with $\|\nabla_g^m u\|_{n/m} \leqslant 1$, there holds
\begin{equation}\label{eq:Tarsihyperbolic}
\int_\Omega \exp \big( \beta(n,m) |u|^{n/(n-m)} \big) dV_g \leqslant C(n) |\Omega|.
\tag{AMT${}^{\H}_{bcN}$}
\end{equation}
The constant $\beta(n,m)$ is sharp in the sense that the supremum of the left hand side of \eqref{eq:Tarsihyperbolic} in $W_{N,g}^{m,n/m}(\Omega)$ becomes infinity if it is replaced by any larger $\beta$.
\end{theorem}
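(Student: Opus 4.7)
The plan is to follow Tarsi's Euclidean strategy \cite{Tarsi2012}, adapted to $\H^n$. The core idea is to reduce from an arbitrary Navier function $u\in W_{N,g}^{m,n/m}(\Omega)$ to a radial function $\tilde u$ on the hyperbolic geodesic ball $\Omega^*\subset\H^n$ with $|\Omega^*|=|\Omega|$, via an iterated hyperbolic Talenti-type comparison, so that the hyperbolic decreasing rearrangement $u^*$ of $|u|$ satisfies $u^*\leqslant \tilde u$ almost everywhere and $\|\nabla_g^m \tilde u\|_{n/m}\leqslant \|\nabla_g^m u\|_{n/m}$. The Navier conditions $\Delta_g^j u = 0$ on $\partial\Omega$ for $j=0,1,\ldots,[(m-1)/2]$ are precisely what permit the iteration through the chain $-\Delta_g v_{j-1}=v_j$ with $v_j:=(-\Delta_g)^j u$.

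The main technical ingredient is a hyperbolic Talenti comparison for the Poisson equation: if $w\in W_0^{1,p}(D)$ weakly solves $-\Delta_g w=f$ with $f\geqslant 0$ on a bounded domain $D\subset \H^n$, then $w^*$ on the hyperbolic ball $D^*$ of the same volume is dominated pointwise by the radial solution of $-\Delta_g\widetilde w=f^*$ on $D^*$ with $\widetilde w=0$ on $\partial D^*$. The proof mirrors Talenti's classical coarea argument, with the Euclidean isoperimetric inequality replaced by its hyperbolic counterpart; the symmetrization toolkit developed in Section~\ref{sec-Preliminaries} should provide this comparison together with the hyperbolic Polya--Szego inequality $\|\nabla_g w^*\|_p\leqslant \|\nabla_g w\|_p$. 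Iterating this comparison through $-\Delta_g v_{j-1}=v_j$, with one additional Polya--Szego step in the odd case $m=2k+1$ applied to $\nabla_g\Delta_g^k u$, produces the radial majorant $\tilde u$ on $\Omega^*$.

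Since the $\tilde u$ so constructed lies in the radial Navier class on $\Omega^*$ but not generally in $W_0^{m,n/m}(\Omega^*)$, the final step requires the sharp Adams-type inequality for radial Navier functions on a hyperbolic geodesic ball. Switching to geodesic polar coordinates turns this into a weighted one-dimensional estimate: the polyharmonic problem with Navier data on the ball admits an explicit Green function representation whose kernel carries the hyperbolic volume factor $(\sinh r)^{n-1}$, after which the classical Moser level-set decomposition gives $\int_{\Omega^*}\exp\bigl(\beta(n,m)|\tilde u|^{n/(n-m)}\bigr)\,dV_g\leqslant C(n)|\Omega^*|$. Transferring this back to $u$ via $u^*\leqslant \tilde u$ yields \eqref{eq:Tarsihyperbolic}, after absorbing the bounded polynomial contributions (of order at most $C(n)|\Omega|$) that arise when comparing $\exp$ with $\Phi_{n,m}$.

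Sharpness of $\beta(n,m)$ follows by testing with a hyperbolic Moser-type sequence supported in a small geodesic ball inside $\Omega$: these functions already lie in $W_0^{m,n/m}(\Omega)\subset W_{N,g}^{m,n/m}(\Omega)$, and for any $\beta>\beta(n,m)$ their normalized Adams integrals diverge, exactly as in the sharpness discussion for \eqref{eq:FontanaMorpurgo2015}. The main obstacle in this plan is the iterated hyperbolic Talenti comparison: each application must handle the non-Euclidean conformal weight appearing in both $dV_g$ and $\nabla_g$ without losing control of the $L^{n/m}$ norm of $\nabla_g^m u$, and the chain must close cleanly across the Navier boundary conditions without accumulating spurious boundary contributions.
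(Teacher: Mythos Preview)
Your reduction---iterated hyperbolic Talenti comparison through the Navier chain, plus one P\'olya--Szeg\H{o} step for odd $m$---is exactly the framework the paper sets up in Propositions~\ref{comparison}--\ref{evenordercomparison} and at the start of the odd case in Section~\ref{sec-AdamsTypeWithBoudary}. The gap is in your endgame. You propose to finish via an ``explicit Green function representation'' for the polyharmonic Navier problem on a hyperbolic geodesic ball together with the ``classical Moser level-set decomposition.'' Neither is what the paper does, and neither is straightforward: the hyperbolic Navier Green function for $(-\Delta_g)^k$ has no tractable closed form that would let you read off the sharp kernel asymptotics, and Moser's level-set argument is a first-order device that does not by itself produce the sharp constant $\beta(n,m)$ for $m\geqslant 2$.

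The paper sidesteps the hyperbolic Green function entirely by a \emph{Euclidean} detour. Proposition~\ref{estimateforsolution1} bounds $u^*(t)$ directly by a Riesz-type integral of $f^*=((-\Delta_g)^k u)^*$; the proof builds from $f^*$ a radial datum $g$ on a Euclidean ball $B_R$ of volume $|\Omega|$, solves the Euclidean Navier problem $(-\Delta)^k w=g$ there, dominates the hyperbolic iterated solution by $w^*$ via the one-line isoperimetric comparison $\sinh^{n-1}F(s)\geqslant (s/\Omega_n)^{(n-1)/n}$, and finally bounds $w^*$ by O'Neil's rearrangement inequality for the Riesz kernel $|x|^{2k-n}$. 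It is this O'Neil step, not any hyperbolic Green function, that pins down the sharp $\beta(n,m)$. The resulting pointwise estimate is then fed into Adams' lemma (Lemma~\ref{Adamslemma}) after the change of variable $t=|\Omega|e^{-\tau}$---this, rather than a level-set decomposition, is the correct higher-order tool. For odd $m=2k+1$, your ``one additional P\'olya--Szeg\H{o} step'' is only the opening move: the paper must further re-express $f^*=g\circ\varphi$ through an auxiliary monotone change of variable adapted to the hyperbolic weight $(\sinh F(s))^{n(n-1)/m}$, apply the Hardy--Littlewood inequality to $(g')^*$, and integrate by parts before the estimate reaches a form to which Adams' lemma applies.
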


Another aspect of the Moser--Trudinger and Adams inequalities concerns the concentration-compactness phenomena. In his famous paper \cite{Lions1985}, Lions proved a so-called concentration-compactness principle for the Moser functional, known as Lions' lemma, which asserts that given a bounded domain $\Omega$ in $\R^n$ if a sequence $\{u_j\}_j \subset W^{1,n}_0(\Omega)$ with $\|\nabla u_j\|_{L^n(\Omega)} = 1$ converges weakly to a non-zero function $u\in W^{1,n}_0(\Omega)$, then there holds
\begin{equation}\label{eq:con-comLions}
\sup_{j} \int_\Omega \exp \big(p \beta(n,1) |u_j|^{n/(n-1)} \big) dx < +\infty
\end{equation}
for any $p < (1 -\|\nabla u^\sharp\|_{L^n(\Omega^\sharp)}^n)^{- 1/(n-1)}$. Here, $u^\sharp$ and $\Omega^\sharp$ are the rearrangement of $u$ and $\Omega$, respectively; see Section \ref{sec-Preliminaries} below for the definition. Note that the inequality \eqref{eq:con-comLions} does not give any further information than the Moser--Trudinger inequality if the sequence converges weakly to the zero function, but the implication of \eqref{eq:con-comLions} is that the critical Moser functional is compact outside a weak neighborhood of zero function. 

In \cite{CCH2013}, \v{C}erny, Cianchi and Hencl improved Lions' result by showing that the inequality \eqref{eq:con-comLions} still holds for any 
\[
p < P_{n,1}(u) =: (1 -\|\nabla u\|_{L^n(\Omega)}^n)^{- 1/(n-1)}.
\] 
Moreover, the threshold $P_{n,1}(u)$ is sharp. A more detailed discussion on Lions' lemma and its generalization to functions with unrestricted boundary condition can be found in \cite{CCH2013}. 

Recently,Lions' lemma for the Moser functional has been extended on whole space $\R^n$ by do \'O, de Souza, de Medeiros and Severo \cite{doO2014b} by exploiting the approach of \v{C}erny, Cianchi and Hencl \cite{CCH2013}. The concentration-compactness principle for Adams' functional has been established by do \'O and Macedo \cite{doO2014a} by using the rearrangement argument and the generalization of Talenti's comparison principle. In a very recent paper, Lions-type lemma for Adams' functional on whole space $\R^n$ was proved by Nguyen \cite{VHN2016}. The method used in \cite{VHN2016} is a further modification of the method of \v{C}erny, Cianchi and Hencl \cite{CCH2013} and is completely based on estimates for decreasing rearrangement of functions in terms of their higher order derivatives. 

Following the approach used in \cite{VHN2016}, we establish a Lions-type lemma for Adams' inequality in the whole hyperbolic space $\H^n$. To the best of our knowledge, no Lions-type lemma for Adams' inequality in $\H^n$ in full generality is known except for a few cases. For examples, it was established by Karmakar \cite{Kar2015} in $W^{1,n}(\H^n)$ and $W^{2,n/2}(\H^n)$ by using a cover lemma and a Lions-type lemma for the Moser--Trudinger and Adams inequalities on bounded domains of $\R^n$. However, his proof is completely different with ours given below. The following is our result.

\begin{theorem}\label{Lionslemma}
Let $m$ be a positive integer less than $n$ and let $\{u_j\}_j$ be a sequence in $W^{m,n/m}(\H^n)$ such that $\|\nabla_g^m u_j\|_{n/m} \leqslant 1$ and $u_j$ converges weakly to a non-zero function $u$ in $W^{m,n/m}(\H^n)$. Then
\begin{equation}\label{eq:Lions}
\sup_{j} \int_{\H^n} \Phi_{n,m} \big(p \beta(n,m) |u_j|^{n/(n-m)} \big) dV_g < +\infty
\tag{AMT${}^{\H}_{CC}$}
\end{equation}
for all $p < P_{n,m}(u)$ where
\[
P_{n,m}(u) :=
\begin{cases}
(1 -\|\nabla^m u\|_{n/m}^{n/m})^{-m/(n-m)}&\mbox{if $\|\nabla^m u\|_{n/m} < 1$},\\
+\infty &\mbox{if $\|\nabla_g^m u\|_{n/m} =1$.}
\end{cases}
\]
Moreover, the threshold $P_{n,m}(u)$ is sharp in the sense that \eqref{eq:Lions} is no longer true if $p \geqslant P_{n,m}(u)$.
\end{theorem}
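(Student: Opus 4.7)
The plan is to adapt the rearrangement-based argument of Nguyen \cite{VHN2016} (a higher-order extension of the approach of \v{C}erny, Cianchi and Hencl \cite{CCH2013}) to the hyperbolic setting, relying on the rearrangement and P\'olya--Szeg\"o-type tools gathered in Section~\ref{sec-Preliminaries}. By the Hardy--Littlewood identity,
\[
\int_{\H^n}\Phi_{n,m}\bigl(p\beta(n,m)|u_j|^{n/(n-m)}\bigr)\,dV_g
=\int_0^{\infty}\Phi_{n,m}\bigl(p\beta(n,m)\,u_j^*(t)^{n/(n-m)}\bigr)\,dt,
\]
where $u_j^*$ denotes the decreasing rearrangement of $|u_j|$ with respect to the hyperbolic volume; hence the task reduces to controlling $u_j^*$ uniformly. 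Writing $v_j := u_j - u$, the strategy is to decompose $u_j^*$ into the contributions of $u$ and of $v_j$, and to apply the Fontana--Morpurgo inequality \eqref{eq:FontanaMorpurgo2015} to each piece after a suitable rescaling.

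The key ingredient is a sharp pointwise estimate for $u_j^*(t)^{n/(n-m)}$ in terms of $\beta(n,m)^{-1}\log(1/t)$ plus a remainder controlled by $\|\nabla_g^m u_j\|_{n/m}$ on super-level sets; such a bound is a variant of the one underlying \eqref{eq:FontanaMorpurgo2015} itself. Combining it with the a.e.\ convergence $u_j \to u$ (automatic on $\H^n$ from weak convergence in $W^{m,n/m}$ and Rellich compactness on geodesic balls) yields the one-sided energy estimate
\[
\limsup_{j\to\infty}\|\nabla_g^m v_j\|_{n/m}^{n/m}\le 1-\|\nabla_g^m u\|_{n/m}^{n/m}.
\]
Given $p<P_{n,m}(u)$, one picks $q>1$ and $\epsilon>0$ with $pq(1+\epsilon)<P_{n,m}(u)$; applying Jensen's inequality to the convex function $\Phi_{n,m}$ together with the subadditivity bound $(a+b)^{n/(n-m)}\le(1+\epsilon)a^{n/(n-m)}+C_\epsilon b^{n/(n-m)}$ (with $a=|v_j|,\,b=|u|$), and then invoking \eqref{eq:FontanaMorpurgo2015} for $v_j/\|\nabla_g^m v_j\|_{n/m}$, bounds the $v_j$-contribution uniformly in $j$; the $u$-contribution is $j$-independent and finite by the rearrangement analysis for $u$ itself.

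The principal obstacle is the derivation of the one-sided energy estimate above: the Brezis--Lieb identity
\[
\|\nabla_g^m u_j\|_{n/m}^{n/m}=\|\nabla_g^m u\|_{n/m}^{n/m}+\|\nabla_g^m v_j\|_{n/m}^{n/m}+o(1)
\]
is not automatic from weak convergence in $W^{m,n/m}(\H^n)$, because for $n/m\ne 2$ the space $L^{n/m}$ is not Hilbert and because the top-order derivatives $\nabla_g^m u_j$ need not converge a.e.\ to $\nabla_g^m u$. The rearrangement viewpoint bypasses this by transferring the decomposition to the distribution functions of $u_j$ (which do converge through the a.e.\ convergence of $u_j$ itself) and by replacing the Brezis--Lieb step with a P\'olya--Szeg\"o-type comparison at the level of super-level sets.

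For sharpness of $P_{n,m}(u)$, I would construct a concentrating extremal family: take $\psi_j$ of Adams type supported in a shrinking geodesic ball $B_j$ about the origin of the ball model, normalized so that $\|\nabla_g^m\psi_j\|_{n/m}^{n/m}=1-\|\nabla_g^m u\|_{n/m}^{n/m}$ and $\psi_j\rightharpoonup 0$ in $W^{m,n/m}(\H^n)$. Then $u_j:=u+\psi_j$ satisfies $u_j\rightharpoonup u$ and, by an asymptotically disjoint-support computation (since $u$ is essentially zero on $B_j$ as $j\to\infty$), $\|\nabla_g^m u_j\|_{n/m}\to 1$. Evaluating $\Phi_{n,m}(p\beta(n,m)|u_j|^{n/(n-m)})$ on $B_j$ and exploiting the logarithmic profile of $\psi_j$ together with the critical exponent in \eqref{eq:FontanaMorpurgo2015} then shows that the integral diverges whenever $p\ge P_{n,m}(u)$, establishing optimality.
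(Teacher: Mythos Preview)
Your outline has a genuine gap at exactly the point you flag as the ``principal obstacle.'' The one-sided energy estimate
\[
\limsup_{j\to\infty}\|\nabla_g^m(u_j-u)\|_{n/m}^{n/m}\le 1-\|\nabla_g^m u\|_{n/m}^{n/m}
\]
does \emph{not} follow from weak convergence $u_j\rightharpoonup u$ in $W^{m,n/m}(\H^n)$ together with a.e.\ convergence of $u_j$ (or of its lower-order derivatives). Brezis--Lieb requires a.e.\ convergence of the top-order tensor $\nabla_g^m u_j$ itself, which you do not have. For $n/m<2$ the inequality is in fact false in general already at the level of $L^{n/m}$: if $f_j\rightharpoonup f$ weakly in $L^p([0,1])$ with $f\equiv 1$ and $f_j-f=\mathrm{sign}\bigl(\sin(2\pi j x)\bigr)$, then after normalising so that $\|f_j\|_p=1$ one computes $\|f_j-f\|_p^p=2^{1-p}>1-2^{1-p}=1-\|f\|_p^p$ whenever $1<p<2$. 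Your sentence about ``transferring the decomposition to the distribution functions of $u_j$'' does not supply a mechanism: the distribution function of $u_j$ carries no information about $\|\nabla_g^m u_j\|_{n/m}$, and a P\'olya--Szeg\H{o} comparison for super-level sets of $u_j$ cannot by itself produce a splitting of the top-order energy.

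The paper's argument avoids this obstacle altogether and is structurally different from your $u_j=u+v_j$ decomposition. It works directly with the rearrangement of the top-order derivative, $f_j^*:=(\Delta_g^k u_j)^*$ (for $m=2k$; an analogous gradient quantity when $m$ is odd). These are monotone functions on $(0,\infty)$ bounded in $L^{n/m}$, so by Helly's selection a subsequence converges a.e.\ to some $g\ge f^*$ with $\int_0^\infty g^{n/m}\le 1$. One then argues by contradiction: if the Adams functional at level $p<P_{n,m}(u)$ blows up, a sharp pointwise bound of the form $u_j^*(t_1)-u_j^*(t_2)\le \int_{t_1}^{t_2} f_j^*(s)M(s)\,ds+C$ (Proposition~\ref{keyforLions}) forces the existence of points $s_j\to 0$ where the associated potential is large. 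Splitting $f_j^*=T^L(f_j^*)+T_L(f_j^*)$ by a level-truncation and passing to the limit via Fatou yields $1-r^{-(n-m)/m}\ge \int_0^\infty\bigl(g^{n/m}-(T_L g)^{n/m}\bigr)\,ds$ for all $r\in(p,P_{n,m}(u))$, which contradicts $r<P_{n,m}(u)$ once $L$ is chosen large. The point is that truncation at the level of $f_j^*$ plus Helly replaces the missing Brezis--Lieb step; no energy splitting of $u_j-u$ is ever attempted. For sharpness, note also that you cannot assume ``$u$ is essentially zero on $B_j$'': the paper \emph{constructs} the limit $u$ (a smooth function constant on a fixed inner ball) simultaneously with the concentrating bump, so that the supports of $\nabla_g^m u$ and $\nabla_g^m\psi_j$ are exactly disjoint, not merely asymptotically so.
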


The rest of this paper is organized as follows: We recall some facts about the rearrangement in the hyperbolic space and prove some useful inequalities involving the rearrangement such as a Talenti-type comparison principle and an estimate for the rearrangement of weak solutions to a Dirichlet problem in hyperbolic spaces in Section \ref{sec-Preliminaries}. Having all preliminaries, we prove Theorem \ref{Maintheorem} in Section \ref{sec-AdamsWithExactGrowth} while Theorems \ref{AdachiTanakatype}, \ref{Adamsnormtau}, and \ref{improvedversion} will be proved in Section \ref{sec-VariousAdamsType}. Then we proved Theorem \ref{Tarsihyperbolic} in Section \ref{sec-AdamsTypeWithBoudary}. In Section \ref{sec-LionsLemma}, we prove Theorem \ref{Lionslemma}.

\setcounter{tocdepth}{1}
\renewcommand{\baselinestretch}{0.7}\normalsize
\tableofcontents
\renewcommand{\baselinestretch}{1.0}\normalsize
\parskip=8pt
\setcounter{tocdepth}{2}


\section{Preliminaries}
\label{sec-Preliminaries}

\subsection{Rearrangement in hyperbolic spaces}

It is now known that the symmetrization argument works well in the setting of hyperbolic spaces. It is not only the key tool in the proof of the classical Moser--Trudinger in $\H^n$ \cite{LT2013} but also a key tool in our proof of Theorem \ref{Maintheorem}. 

Let us now recall some facts about the rearrangement in the hyperbolic spaces. Let the function $f: \H^n\to \R$ be such that 
\[
\big |\{x\in \H^n\, :\, |f(x)| > t\} \big | = \int_{\{x\in \H^n\,:\, |f(x)|>t\}} dV_g < +\infty
\]
for every $t >0$. Its \textit{distribution function} is defined by
\[
\mu_f(t) = \big |\{x\in \H^n\, :\, |f(x)| > t\} \big |.
\]
Then its \textit{decreasing rearrangement} $f^*$ is defined by
\[
f^*(t) = \sup\{s > 0\, :\, \mu_f(s) > t\}.
\]
Now, \textit{Schwarz's symmetrization} of $f$, denoted by $f^\sharp$, is the function $f^\sharp: \H^n \to \R$ defined by 
\[
f^\sharp(x) = f^*(|B(0,d(0,x))|),
\]
where the notation $B(0,r)$ denotes the ball in $\H^n$ centered at the origin $0$ with hyperbolic radius $r$ and as already mentioned $|B(0,r)|$ is its hyperbolic volume. In $\R^n$, we use $B_r$ to denote the ball centered at the origin $0$ with radius $r$. Using the distance $d(x,0)$, it is not hard to verify that
\[
B(0, r) = B_{\tanh (r/2)}.
\]
From this fact, we find that
\begin{equation}\label{VolumeSphereInHyperbolic}
\big| \partial B(0,r) \big|  =n \Omega_n   \sinh^{n-1}(r) 
\end{equation} 
and that
\begin{equation}\label{VolumeBallInHyperbolic}
\big| B(0,r) \big|  =n \Omega_n  \int_0^r \sinh^{n-1}(s) ds.
\end{equation} 
Note that for any continuous increasing function $\Phi: [0, +\infty) \to [0, +\infty)$ we have
\[
\int_{\H^n} \Phi(|f|) dV_g = \int_{\H^n} \Phi(f^\sharp) dV_g.
\]
Moreover, the Hardy--Littlewood inequality implies that
\[
\int_{\H^n} |f h| dV_g \leqslant \int_{\H^n} f^\sharp h^\sharp dV_g,
\]
for any functions $f,h:\H^n \to \R$. Since $f^*$ is non-increasing, the \textit{maximal function} $f^{**}$ of the rearrangement $f^*$ defined by
\[
f^{**}(t) = \frac1t \int_0^t f^*(s) ds
\]
for $s \geqslant 0$ is also non-increasing. Furthermore, it is easy to see that $f^{**} \geqslant f^*$. Moreover, we have the following.

\begin{lemma}\label{Hardy}
Let $f\in L^p(\H^n)$ with $p\in (1, +\infty)$. Then we have
\[
\Big(\int_0^{+\infty} f^{**}(s)^p ds \Big)^{1/p} \leqslant p' \Big(\int_0^{+\infty} f^*(s)^p ds \Big)^{1/p},
\]
where $1/p+ 1/p' = 1$. In particular, if ${\rm supp}\, f\subset\Omega \subset \H^n$, then
\[
\Big(\int_0^{|\Omega|} f^{**}(s)^p ds \Big)^{1/p} \leqslant p' \Big(\int_0^{|\Omega|} f^*(s)^p ds \Big)^{1/p}.
\]
\end{lemma}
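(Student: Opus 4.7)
The plan is to reduce the statement to the classical one--dimensional Hardy inequality applied to the nonnegative non--increasing function $f^*$ on $(0,+\infty)$. Recall that Hardy's inequality asserts that for every nonnegative measurable $g$ on $(0,+\infty)$ and every $p \in (1,+\infty)$,
\[
\int_0^{+\infty}\Big(\frac1t\int_0^t g(s)\,ds\Big)^{p} dt \;\leqslant\; \Big(\frac{p}{p-1}\Big)^{p}\int_0^{+\infty} g(t)^{p}\, dt,
\]
with sharp constant $p/(p-1)=p'$. Since by definition $f^{**}(t)=\frac1t\int_0^t f^*(s)\,ds$, the first inequality in the lemma is obtained directly by taking $g = f^*$ and raising both sides to the power $1/p$. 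The integrability of $f^*$ in $L^{p}(0,+\infty)$ follows from the fact that $f\in L^{p}(\H^n)$, via the layer cake identity $\int_0^{+\infty}f^*(s)^{p}ds = \int_{\H^n}|f|^{p}\,dV_g$, so all integrals are well defined and the classical Hardy inequality applies without modification.

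For the second assertion, I would simply note the trivial truncation: if $\mathrm{supp}\,f \subset \Omega$, then $\mu_f(t)\leqslant |\Omega|$ for every $t>0$, whence $f^*(s)=0$ for $s\geqslant |\Omega|$, and so
\[
\int_0^{+\infty} f^*(s)^{p}\,ds \;=\; \int_0^{|\Omega|} f^*(s)^{p}\,ds.
\]
Since $f^{**}\geqslant 0$, one also has $\int_0^{|\Omega|} f^{**}(s)^{p}\,ds \leqslant \int_0^{+\infty} f^{**}(s)^{p}\,ds$. Chaining these with the first part yields
\[
\int_0^{|\Omega|} f^{**}(s)^{p}\,ds \;\leqslant\; (p')^{p}\int_0^{|\Omega|} f^*(s)^{p}\,ds,
\]
as desired.

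There is essentially no obstacle here; the only point requiring a line of care is checking that the integrability assumption $f\in L^{p}(\H^n)$ transfers to $f^*\in L^{p}(0,+\infty)$ (through the measure-preserving property of the decreasing rearrangement) so that the classical Hardy inequality can be invoked with finite right--hand side. Everything else is a direct rewriting using $f^{**}(t)t=\int_0^t f^*(s)\,ds$, together with a monotonicity observation to handle the bounded--support case.
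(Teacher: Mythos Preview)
Your proposal is correct and matches the paper's own treatment: the paper does not give a proof but simply remarks that the lemma is an immediate consequence of Hardy's classical inequality (citing \cite[Proposition 3.1]{MS2014}), which is exactly the reduction you carry out. Your handling of the bounded-support case via $f^*(s)=0$ for $s\geqslant |\Omega|$ and monotone truncation of the $f^{**}$ integral is the standard way to read off the second assertion.
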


Lemma \ref{Hardy} above is just an immediate consequence of a well-known result of G.H. Hardy, for interested reader, we refer to \cite[Proposition 3.1]{MS2014}.

\subsection{Some useful inequalities involving rearrangements}

In this subsection, we list some useful facts, which shall be used in the proof of Theorem \ref{Maintheorem}, whose proofs will be given in the next subsection. We first prove a comparison principle for solutions of a Dirichlet problem, which is similar to the one of Talenti in the Euclidean space \cite{Tal1976}. 

Let $\Omega\subset \H^n, n\geqslant 2$, be a bounded, open set and let $f$ be a suitable $L^p$-function with $p>1$. We consider the following Dirichlet problem
\begin{equation}\label{eq:Dirichletproblem}
\begin{cases}
-\Delta_g u = f&\mbox{in $\Omega$},\\
\quad\quad u = 0&\mbox{on $\partial \Omega$.}
\end{cases}
\end{equation}
Let us denote by $\Omega^\sharp$ the ball centered at origin such that $|\Omega^\sharp| =|\Omega|$ and consider the Dirichlet problem
\begin{equation}\label{eq:Dirichletproblemrearrangement}
\begin{cases}
-\Delta_g v = f^\sharp&\mbox{in $\Omega^\sharp$},\\
\quad\quad v = 0&\mbox{on $\partial \Omega^\sharp$.}
\end{cases}
\end{equation}
Then we have the following comparison principle.

\begin{proposition}\label{comparison}
Any weak solutions $u$ and $v$ to \eqref{eq:Dirichletproblem} and \eqref{eq:Dirichletproblemrearrangement} respectively enjoys the following a prior estimate
\[u^\sharp (x) \leqslant v (x)\]
in $\Omega^\sharp$.
\end{proposition}

We next use Proposition \ref{comparison} to obtain a comparison principle for higher derivatives $\Delta_g^k$; see Proposition \ref{evenordercomparison} below. For this reason, given $f$, we consider the following two problems
\begin{equation}\label{eq:DirichletproblemHO}
\begin{cases}
(-\Delta_g)^k u = f&\mbox{ in } \Omega,\\
\quad\quad \Delta_g^i u = 0 & \mbox{ on } \partial \Omega
\end{cases}
\end{equation}
for all $i =0,1, ... , k-1$ and
\begin{equation}\label{eq:DirichletproblemrearrangementHO}
\begin{cases}
(-\Delta_g)^k v = f^\sharp&\mbox{in $\Omega^\sharp$},\\
\quad\quad \Delta_g^i v = 0 & \mbox{ on } \partial \Omega^\sharp
\end{cases}
\end{equation}
for all $i =0,1, ... , k-1$. Here $\Omega$ is again a bounded open domain in $\H^n$. To study \eqref{eq:DirichletproblemHO}, we denote
\[
u_i = (-\Delta_g)^i u
\] 
for $i =0,1, ... , k$. It is obvious to see that $u_0,u_1,\dots,u_{k-1}$ solve the following problems 
\begin{subequations}\label{prob-P_i}
\begin{align}
\begin{cases}
-\Delta_g u_i  = u_{i+1} &\mbox{ in }\Omega,\\
 \quad\quad  u_i = 0 &\mbox{ on }\partial \Omega.
 \end{cases}
\tag*{(\ref{prob-P_i}$_i$)}
\end{align}
\end{subequations}
Similarly, to study \eqref{eq:DirichletproblemrearrangementHO} we denote 
\[
v_i = (-\Delta_g)^i v
\] 
for $i =0,1, ... , k$. Clearly $v_0, v_1, ..., v_{k-1}$ solve
\begin{subequations}\label{prob-Ps_i}
\begin{align}
\begin{cases}
-\Delta_g v_i = v_{i+1}&\mbox{ in }\Omega^\sharp,\\
\quad\quad v_i = 0 &\mbox{ on }\partial \Omega^\sharp.
\end{cases}
\tag*{(\ref{prob-Ps_i}$_i^\sharp$)}
\end{align}
\end{subequations}
Then we have the following comparison result.

\begin{proposition}\label{evenordercomparison}
Suppose that $u$ and $v$ are weak solutions to \eqref{eq:DirichletproblemHO} and \eqref{eq:DirichletproblemrearrangementHO}, respectively. Then for any $i = 0,1, ... ,k-1$ there holds
\[
u_i^\sharp(x) \leqslant v_i(x) 
\]
everywhere in $\Omega^\sharp$.
\end{proposition}

We also establish the following estimate for the rearrangement function of solutions to \eqref{eq:Dirichletproblem}, which is a hyperbolic analogue of \cite[Proposition 3.4]{MS2014} and is a crucial tool in the proof of Theorem \ref{Maintheorem}.

\begin{proposition}\label{estimateforsolution}
Any weak solution $u$ to \eqref{eq:Dirichletproblem} enjoys the following a prior estimate
\[
u^*(t_1) -u^*(t_2) \leqslant \frac{1}{(n \Omega_n^{1/n})^2} \int_{t_1}^{t_2} \frac{f^{**}(s)}{s^{1-2/n}} ds
\]
for any $0 < t_1 < t_2 \leqslant |\Omega|$.
\end{proposition}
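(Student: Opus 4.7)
The plan is to adapt Talenti's classical symmetrization argument to the hyperbolic setting; the key observation is that the Euclidean-type isoperimetric lower bound on perimeters survives in $\H^n$. Throughout, let $\mu(t)=|\{|u|>t\}|$ denote the hyperbolic distribution function of $u$ and let $\mathcal{P}_g(E)$ denote the hyperbolic perimeter of a Borel set $E\subset\H^n$.

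First I would establish the Euclidean-type isoperimetric inequality $\mathcal{P}_g(E) \geq n\Omega_n^{1/n}|E|^{(n-1)/n}$ in $\H^n$. By the sharp hyperbolic isoperimetric inequality it suffices to verify this on a geodesic ball of radius $r$, which has perimeter $n\Omega_n \sinh^{n-1}(r)$ and volume $n\Omega_n\int_0^r \sinh^{n-1}(s)\,ds$; the claim then reduces to $\sinh^n(r) \geq n\int_0^r \sinh^{n-1}(s)\,ds$, which follows by integrating $(\sinh^n)'(r) = n\sinh^{n-1}(r)\cosh(r) \geq n\sinh^{n-1}(r)$.

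Next I would run the standard Talenti chain. For a.e. $t>0$ (a regular value, by Sard), Gauss's divergence theorem applied to $-\Delta_g u = f$ on $\{|u|>t\}$ together with $u|_{\pa\Omega}=0$ yields $\int_{\{|u|=t\}}|\nabla_g u|_g\, d\sigma_g \leq \int_{\{|u|>t\}}|f|\,dV_g$; the hyperbolic co-area formula gives $-\mu'(t) = \int_{\{|u|=t\}} |\nabla_g u|_g^{-1}\, d\sigma_g$; Cauchy--Schwarz on the level set then combines these into $\mathcal{P}_g(\{|u|>t\})^2 \leq (-\mu'(t))\int_{\{|u|>t\}}|f|\,dV_g$. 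The Hardy--Littlewood inequality bounds the right-most factor by $\mu(t)f^{**}(\mu(t))$, and combining with the isoperimetric inequality of the previous paragraph gives
\[
(n\Omega_n^{1/n})^2\, \mu(t)^{(n-2)/n} \;\leq\; (-\mu'(t))\, f^{**}(\mu(t)) \qquad \text{for a.e. } t>0.
\]
Changing variables via $s = \mu(t) \Leftrightarrow t = u^*(s)$, so that $-du^*/ds(s) = 1/(-\mu'(t))$ at regular points, this rearranges to $-du^*/ds(s) \leq f^{**}(s)/\bigl[(n\Omega_n^{1/n})^2 s^{1-2/n}\bigr]$ for a.e. $s$; integrating from $s=t_1$ to $s=t_2$ yields exactly the asserted bound.

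The main technical obstacle is rigorously justifying the divergence and co-area steps for a merely weak solution $u$: one needs the co-area formula in its Sobolev form, Sard-type genericity of regular values, and care at the critical set $\{|\nabla_g u|_g = 0\}$. The standard remedy is either to approximate $f$ by smooth data and pass to the limit using continuity of $f\mapsto f^{**}$ in $L^{n/2}$, or to invoke Federer's co-area formula for Sobolev functions together with the observation that $\mu$ is constant on the $u$-image of the critical set, so that the above computations remain valid for a.e. $t$ in the sense of monotone functions.
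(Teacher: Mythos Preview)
Your proposal is correct and follows essentially the same Talenti-type symmetrization argument as the paper: isoperimetric inequality on level sets, co-area, Cauchy--Schwarz, and Hardy--Littlewood combine to the differential inequality $(n\Omega_n^{1/n})^2\mu(t)^{(n-2)/n}\leqslant(-\mu'(t))f^{**}(\mu(t))$, and your reduction of the Euclidean-type isoperimetric bound to $\sinh^n r\geqslant n\int_0^r\sinh^{n-1}$ is exactly the paper's estimate $\sinh^{n-1}F(r)\geqslant(r/\Omega_n)^{(n-1)/n}$.

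Two small points where the paper's execution is tidier. First, instead of the divergence theorem the paper tests the weak formulation against the truncation $\phi=(|u|-t)_+\wedge h\cdot\mathrm{sign}(u)$, which directly yields $-\tfrac{d}{dt}\int_{\{|u|>t\}}|\nabla_g u|_g^2\,dV_g\leqslant\int_0^{\mu(t)}f^*$; this is legitimate for $u\in W_0^{1,2}$ without any smoothness or Sard-type hypotheses, so no approximation is needed. Second, in your last step you pass to the variable $s=\mu(t)$, bound $-(u^*)'(s)$, and integrate; this is only safe if $u^*$ is absolutely continuous on $(t_1,t_2)$, since otherwise the jump part of $u^*(t_1)-u^*(t_2)$ is not controlled by $\int_{t_1}^{t_2}(-(u^*)')$. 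The paper sidesteps this by integrating the inequality $1\leqslant(\text{stuff})$ in the original variable $t$ over $(s',s)$, using $\int_{s'}^s g(\mu(t))(-\mu'(t))\,dt\leqslant\int_{\mu(s)}^{\mu(s')}g$, and then invoking only the elementary facts $\mu_u(s)>t_1$, $\mu_u(s')\leqslant t_2$ for $u^*(t_2)<s'<s<u^*(t_1)$ before letting $s\uparrow u^*(t_1)$, $s'\downarrow u^*(t_2)$. This route never differentiates $u^*$ and is robust to jumps in both $\mu$ and $u^*$.
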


To prove Theorem \ref{Tarsihyperbolic}, we also need an estimate for the arrangement function of solutions to problem \eqref{eq:DirichletproblemHO}, which is a higher order version of Proposition \ref{estimateforsolution}. To be precise, we will prove the following result.

\begin{proposition}\label{estimateforsolution1}
Let $n>2k \geqslant 2$ and $u$ be a weak solution to problem \eqref{eq:DirichletproblemHO}. Then there holds
\[
u^*(t) \leqslant \frac n{n-2k}\frac{c_{n,k}}{(n \Omega_n^{1/n})^{2k}} \int_{t}^{|\Omega|} \frac{f^{*}(s)}{s^{1-2k/n}} ds + \frac{c_{n,k+1}}{(n \Omega_n^{1/n})^{2k}} t^{2k/n-1} \int_0^t f^*(s) ds,
\]
where
\[
c_{n,k} = 
\begin{cases}
\dfrac{n^{2(k-1)}}{2^{k-1}(k-1)! \prod\nolimits_{j=1}^{k-1}(n-2j)} & \text{ if } k\geqslant 2,\\
1, & \text{ if } k=1.
\end{cases}
\]
\end{proposition}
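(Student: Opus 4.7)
My plan is to reduce to a radial problem on $\Omega^{\sharp}$ by means of the iterated comparison principle, and then to iterate the second-order Talenti-type estimate of Proposition~\ref{estimateforsolution} by induction on $k$.

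First I apply Proposition~\ref{evenordercomparison} to obtain $u^{\sharp}\le v_{0}$ pointwise on $\Omega^{\sharp}$, where $v_{k-1},\ldots,v_{0}$ is the symmetrized chain. Starting from $v_{k-1}$, whose data $f^{\sharp}\ge 0$ is radial and radially non-increasing, and moving upward through the chain, repeated use of the strong maximum principle together with rotational symmetry forces each $v_{i}$ to be radial and radially non-increasing on $\Omega^\sharp$; in particular $v_{0}^{*}(t)=v_{0}(r(t))$ with $|B(0,r(t))|=t$, and $u^{*}(t)\le v_{0}^{*}(t)$. It therefore suffices to prove the bound with $v_{0}$ in place of $u$ and $f^{\sharp}$ in place of $f$ (noting that $(f^{\sharp})^{*}=f^{*}$).

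For the base case $k=1$, Proposition~\ref{estimateforsolution} specialized to $t_{2}=|\Omega|$ (where $u^{*}(|\Omega|)=0$) gives $u^{*}(t)\le (n\Omega_{n}^{1/n})^{-2}\int_{t}^{|\Omega|}s^{2/n-2}f^{**}(s)\,ds$; writing $f^{**}(s)=s^{-1}\int_{0}^{s}f^{*}$ and exchanging the order of integration (equivalently, integrating by parts in $s$) produces the two-term splitting of the statement, namely an interior piece $\tfrac{n}{n-2}\int_{t}^{|\Omega|}s^{2/n-1}f^{*}(s)\,ds$ and a boundary piece $C\,t^{2/n-1}\int_{0}^{t}f^{*}(s)\,ds$, with $C$ chosen equal to $c_{n,2}=n^{2}/[2(n-2)]$ after absorbing the non-negative term involving $|\Omega|^{2/n-1}$ into the positive part. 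For the inductive step, assume the claim at order $k-1$. Since $-\Delta_{g}v_{0}=v_{1}$ on $\Omega^{\sharp}$, the base case applied with $v_{1}$ as right-hand side controls $v_{0}^{*}(t)$ by the same two-term expression with $v_{1}^{*}$ in place of $f^{*}$. Substituting the inductive bound for $v_{1}^{*}$ produces four double integrals in the remaining variables; each inner integral is an elementary power integral $\int s^{a-1}\,ds = s^{a}/a$, so after Fubini and combination of like terms the four pieces collapse to the two claimed ones, the exponent jumps from $2(k-1)/n-1$ to $2k/n-1$, and the factors $n/(n-2k)$ and $1/k$ generated at this step combine with $c_{n,k-1}$ and $c_{n,k}$ to give the advertised coefficients $c_{n,k}$ and $c_{n,k+1}$.

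The genuine difficulty is not the analysis but the bookkeeping: one must verify that the mixed contributions produced at each induction step recombine cleanly into exactly two terms of the desired form, and that the multiplicative constants accumulated through the induction match the closed form of $c_{n,k}$. Both follow from the recursion $c_{n,k+1}/c_{n,k} = n^{2}/[2k(n-2k)]$, which is precisely the constant produced by the two elementary power integrals over $(t,|\Omega|)$ and $(0,t)$ at stage $k$; repeatedly using $|\Omega|^{2j/n-1}-\sigma^{2j/n-1}\le \sigma^{2j/n-1}$ to discard the global-boundary terms keeps the estimate one-sided throughout.
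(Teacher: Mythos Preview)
Your reduction to the radial chain on $\Omega^\sharp$ via Proposition~\ref{evenordercomparison} is sound, and the $k=1$ step is exactly what the paper does. The divergence from the paper is in how you handle $k\geqslant 2$: the paper does \emph{not} iterate the second--order two--term estimate. Instead it passes to an auxiliary Euclidean problem on the ball $B_R$ of the same volume, compares the iterated solution with the Riesz potential $w = c\,|x|^{2k-n} * g$ (where $g^*=f^*$), and then applies O'Neil's convolution inequality to $w^*$. This produces exactly the stated coefficients $\dfrac{n}{n-2k}\dfrac{c_{n,k}}{(n\Omega_n^{1/n})^{2k}}$ and $\dfrac{c_{n,k+1}}{(n\Omega_n^{1/n})^{2k}}$ in one stroke.

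Your inductive scheme has a genuine gap at the level of constants, and the constants are what matters here (they feed directly into the identity $\beta(n,2k)=\big(\tfrac{n}{n-2k}\tfrac{c_{n,k}}{(n\Omega_n^{1/n})^{2k}}\big)^{-n/(n-2k)}$ used in Theorem~\ref{Tarsihyperbolic}). Concretely: apply the sharp $k=1$ bound to $-\Delta_g v_0=v_1$ with both coefficients equal to $\gamma=\tfrac{n}{n-2}(n\Omega_n^{1/n})^{-2}$, and insert the inductive hypothesis for $v_1^*$ with the stated constants $\alpha_{k-1}=\tfrac{n}{n-2(k-1)}c_{n,k-1}(n\Omega_n^{1/n})^{-2(k-1)}$, $\beta_{k-1}=c_{n,k}(n\Omega_n^{1/n})^{-2(k-1)}$. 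After Fubini, the two ``mixed'' pieces are indeed nonpositive and can be dropped, but the coefficient you obtain in front of $\int_t^{|\Omega|} r^{2k/n-1}f^*(r)\,dr$ is
\[
\frac{n}{2}\,\gamma\,\alpha_{k-1}+\frac{n}{n-2k}\,\gamma\,\beta_{k-1}
=\frac{k(n-2k+2)}{n-2}\cdot \alpha_k,
\]
and $\tfrac{k(n-2k+2)}{n-2}>1$ for every $k\geqslant 2$ and $n>2k$ (e.g.\ it equals $2$ when $k=2$). Strengthening the inductive hypothesis to the sharper second coefficient obtained at the previous step does not save the argument either: already at $k=3$ the leading coefficient produced is $\tfrac{2(n-4)}{n-2}\,\alpha_3>\alpha_3$. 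In short, the four pieces do \emph{not} collapse to the advertised $c_{n,k}$; the two--term estimate discards information at each step that O'Neil's inequality retains, which is why the paper detours through the Riesz potential.
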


It is worth noting that $c_{n,k} = 2 (n-2k) \Gamma(n/2 -k) /(4^k \Gamma(n/2)\Gamma (k))$ when $k \geqslant 2$. As mentioned before, the rest of this section is devoted to proofs of Propositions \ref{comparison}, \ref{evenordercomparison}, \ref{estimateforsolution}, and \ref{estimateforsolution1}. 

\subsection{Proofs of Propositions \ref{comparison}, \ref{evenordercomparison}, \ref{estimateforsolution}, and \ref{estimateforsolution1}}

First, we prove Proposition \ref{comparison}. 

\begin{proof}[\bf Proof of Proposition \ref{comparison}] 
Our proof follows closely the argument in \cite{Tal1976}. For fixed $t, h > 0$, we apply H\"older's inequality to get
\[
\frac1h \int_{\{t < |u| \leqslant t+h\}} |\nabla_g u| dV_g \leqslant \bigg (\frac1h \int_{\{t < |u| \leqslant t+h\}} |\nabla_g u|^2 dV_g\bigg )^{1/2} \Big (\frac{\mu_u(t) -\mu_u(t+h)} h \Big )^{1/2}.
\]
Letting $h\searrow 0$, we obtain
\begin{equation}\label{eq:abcd}
-\frac d{dt} \int_{\{|u| >t\}} |\nabla_g u|_g dV_g \leqslant \bigg(-\frac d{dt} \int_{\{|u| >t\}} |\nabla_g u|_g^2 dV_g\bigg)^{1/2} (-\mu_u'(t))^{1/2}.
\end{equation}
Using the co-area formula, we deduce that
\begin{align*}
\int_{\{|u|>t\}} |\nabla_g u|_g dV_g &= \int_{\{|u| >t\}} |\nabla u| \Big (\frac2{1-|x|^2} \Big )^{n-1} dx\\
&= \int_{\{|s|>t\}} \int_{\{u =s\}} \Big (\frac2{1-|x|^2} \Big )^{n-1} d\mathcal H^{n-1}(x) ds,
\end{align*}
where $d\mathcal H^{n-1}(x)$ denotes the $(n-1)$-dimensional Hausdorff measure. Consequently, for almost everywhere $t >0$, we obtain
\[
-\frac d{dt} \int_{\{|u| >t\}} |\nabla_g u|_g dV_g = \int_{\{|u|=t\}} \Big (\frac2{1-|x|^2} \Big )^{n-1} d\mathcal H^{n-1}(x).
\]
For each $t>0$, let $\rho(t)$ denote the radius of the ball centered at origin having hyperbolic volume $\mu_u (t)$, namely, $|B(0,\rho(t))| =\mu_u(t)$. Applying the isoperimetric inequality in hyperbolic space \cite{BDS2015} and in view of \eqref{VolumeSphereInHyperbolic}, we obtain
\begin{align*}
\int_{\{|u|=t\}} \Big (\frac2{1-|x|^2}\Big )^{n-1} d\mathcal H^{n-1}(x) &\geqslant \int_{\partial B(0,\rho(t))} \Big (\frac2{1-|x|^2}\Big )^{n-1} d\mathcal H^{n-1}(x)\\
& = n \Omega_n \sinh^{n-1} \rho(t).
\end{align*}
On the other hand, from \eqref{VolumeBallInHyperbolic} we have
\begin{align*}
\mu_u(t) = \big| B(0,\rho(t)) \big| &=n\Omega_n \int_0^{\rho(t)} \sinh^{n-1}(s) ds.
\end{align*}
Hence there exists a continuous, strictly increasing function $F$ such that
\[
\rho(t) = F(\mu_u(t)).
\] 
Consequently, we obtain from \eqref{eq:abcd} the following estimate
\begin{equation}\label{eq:ABCD12}
1 \leqslant \frac{-\mu_u'(t)}{[n\Omega_n \sinh^{n-1} F(\mu_u(t))]^2} \Big(-\frac{d}{dt} \int_{\{|u| >t\}} |\nabla_g u|_g^2 dV_g\Big).
\end{equation}
For fixed $t, h >0$, let us define the test function
\begin{equation*}
\phi(x) = 
\begin{cases}
0 &\mbox{if $|u|\leqslant t$},\\
(|u|-t) \sign(u)&\mbox{if $t < |u| \leqslant t+h$},\\
h\sign(u)&\mbox{if $|u| > t+h$.}
\end{cases}
\end{equation*}
Clearly $\phi \in W_0^{1,2}(\Omega)$ and 
$\int_\Omega \langle \nabla_g u, \nabla_g \phi\rangle_g dV_g = \int_\Omega f\phi dV_g$ 
since $u$ is weak solution to \eqref{eq:Dirichletproblem}. An easy computation shows that
\begin{align*}
\int_{\{t < |u| \leqslant t+h\}}|\nabla_g u|_g^2 dV_g =& \int_\Omega \langle \nabla_g u,\nabla_g\phi\rangle_g dV_g\\
=& \int_{\{t < |u| \leqslant t+h\}}f(|u|-t)\sign(u) dV_g  + \int_{\{|u| >t+h\}} fh \sign(u) dV_g\\
=& \int_{\{t < |u|\}}f(|u|-t)\sign(u) dV_g \\
& -\int_{\{t+h < |u|\}}f(|u|-t-h)\sign(u) dV_g.
\end{align*}
Dividing both sides by $h$ then letting $h\searrow 0$ in the resulting equation, and using the Hardy--Littlewood inequality, we obtain
\begin{equation}\label{eq:abcdef}
\begin{split}
-\frac d{dt}\int_{\{|u|>t\}} |\nabla_g u|_g^2 dV_g& = -\frac d{dt} \int_{\{|u|>t\}} f(|u|-t)\sign(u) dV_g \\
& = \int_{\{|u|>t\}} f \sign(u) dV_g \\
&\leqslant \int_{\{|u|>t\}} |f| dV_g \\
&\leqslant \int_0^{\mu_u(t)} f^{*}(s) ds = \mu_u(t) f^{**}(\mu_u(t)).
\end{split}
\end{equation}
Plugging \eqref{eq:abcdef} into \eqref{eq:ABCD12} and integrating the resulting over $(s',s)$ to get
\begin{equation}\label{eq:keykey}
\begin{split}
s-s' &\leqslant \int_{s'}^s \frac{-\mu_u'(t)}{[n\Omega_n \sinh^{n-1} F(\mu_u(t))]^2} \mu_u(t) f^{**}(\mu_u(t)) dt \\
&=\int_{\mu_u(s)}^{\mu_u(s')}\frac1{[n\Omega_n \sinh^{n-1}F(t)]^2} tf^{**}(t) dt.
\end{split}
\end{equation}
Letting $s'\searrow 0$ in \eqref{eq:keykey} we obtain
\[
s \leqslant \int_{\mu_u(s)}^{|\Omega|}\frac1{[n\Omega_n \sinh^{n-1}F(t)]^2} tf^{**}(t) dt.
\]
For any $t \in (0, |\Omega|)$, if $u^*(t) > 0$, then for any $0 < s < u^*(t)$ we must have $\mu_u(s) > t$ by the definition of the rearrangement function. Therefore
\[
s \leqslant \int_{t}^{|\Omega|}\frac1{[n\Omega_n \sinh^{n-1}F(r)]^2} rf^{**}(r) dr.
\]
Letting $s\nearrow u^*(t)$ we get
\[
u^*(t) \leqslant \int_{t}^{|\Omega|}\frac1{[n\Omega_n \sinh^{n-1}F(r)]^2} rf^{**}(r) dr.
\]
It is obvious that if $u^*(t) =0$, then the inequality above is true. Hence for any $t\in (0,|\Omega|)$ we have
\[
u^*(t) \leqslant \int_{t}^{|\Omega|}\frac1{[n\Omega_n \sinh^{n-1}F(r)]^2} rf^{**}(r) dr.
\]
It is easy to verify that 
\[
v(x) = \int_{|B(0,d(0,x))|}^{|\Omega|}\frac1{[n\Omega_n \sinh^{n-1}F(r)]^2} rf^{**}(r) dr
\]
is unique solution to \eqref{eq:Dirichletproblemrearrangement}. The inequality $u^\sharp \leqslant v$ obviously holds true, hence the proof of Proposition \ref{comparison} is finished.
\end{proof}

Now we prove Proposition \ref{evenordercomparison} by applying consecutively Proposition \ref{comparison} and the maximum principle. 

\begin{proof}[\bf Proof of Proposition \ref{evenordercomparison}] 
Observe that $u_k^\sharp = v_k$. Then by making use of Proposition \ref{comparison} we obtain from (\ref{prob-P_i}$_{k-1}$) and (\ref{prob-Ps_i}$_{k-1}^\sharp$) the following 
$$u_{k-1}^\sharp \leqslant v_{k-1}$$ 
in $\Omega^\sharp$. Now we argue by an induction argument. Suppose that for some $1\leqslant i < k$ we already have $u_{k-i}^\sharp \leqslant v_{k-i}$ in $\Omega^\sharp$. Then we have to show that $u_{k-i-1}^\sharp \leqslant v_{k-i-1}$ in $\Omega^\sharp$. Indeed, consider the problem
\begin{equation*}
\begin{cases}
-\Delta_g \omega = u_{k-i}^\sharp &\mbox{ in } \Omega^\sharp,\\
\quad\quad \omega = 0&\mbox{ on }\partial \Omega^\sharp.
\end{cases}
\end{equation*}
Again by applying Proposition \ref{comparison} we obtain from the preceding problem for $\omega$ and (\ref{prob-P_i}$_{k-i-1}$) the following 
$$u_{k-i-1}^\sharp \leqslant \omega.$$
Recall that $u_{k-i}^\sharp \leqslant v_{k-i}$. From this we can apply the maximum principle to (\ref{prob-Ps_i}$_{k-i-1}$) to get $\omega \leqslant v_{k-i-1}$. Therefore, $u_{k-i-1}^\sharp \leqslant v_{k-i-1}$ and the proof follows. 
\end{proof}

Then we show that Proposition \ref{estimateforsolution} follows from the proof of Proposition \ref{comparison}. 

\begin{proof}[\bf Proof of Proposition \ref{estimateforsolution}] 
Using the simple inequality $\cosh s \geqslant 1$, the definition of $\rho (t)$, and \eqref{VolumeBallInHyperbolic}, it is evident that
\begin{align*}
\mu_u(t)  &\leqslant n\Omega_n\int_0^{\rho(t)} \sinh^{n-1}(s) \cosh(s) ds = \Omega_n \sinh^n F(\mu_u(t)).
\end{align*}
Hence, we obtain
\begin{equation}\label{eq:lowerestF}
\sinh^{n-1} F(r) \geqslant \Big(\frac r{\Omega_n}\Big)^{1-1/n}.
\end{equation}
Combining \eqref{eq:lowerestF} and \eqref{eq:keykey} gives 
\[
s-s' \leqslant \frac1{[n\Omega_n^{1/n}]^2}\int_{\mu_u(s)}^{\mu_u(s')} \frac{f^{**}(t)}{t^{1-2/n}} dt.
\]
Now, let $0 < t_1 < t_2 \leqslant |\Omega|$. If $u^*(t_1) = u^*(t_2)$, then the conclusion is trivial. If $u^*(t_1) > u^*(t_2)$, then for any $s$, $s'$ such that $u^*(t_2) < s' < s < u^*(t_1)$, by the definition of rearrangement function, we obviously have $\mu_u(s) > t_1$ and $\mu_u(s') \leqslant t_2$. Then we have
\[
s-s' \leqslant \frac1{(n\Omega_n^{1/n})^2}\int_{\mu_u(s)}^{\mu_u(s')} \frac{f^{**}(t)}{t^{1-2/n}} dt\leqslant \frac1{(n\Omega_n^{1/n})^2}\int_{t_1}^{t_2} \frac{f^{**}(t)}{t^{1-2/n}} dt.
\]
Letting $s\nearrow u^*(t_1)$ and $s'\searrow u^*(t_2)$ implies our desired inequality.
\end{proof}

Finally, we can easily prove Proposition \ref{estimateforsolution1} by applying consecutively Proposition \ref{estimateforsolution}.

\begin{proof}[\bf Proof of Proposition \ref{estimateforsolution1}] 
If $k=1$, then by Proposition \ref{estimateforsolution} we have
\[
u^*(t) \leqslant \frac1{n^2 \Omega_n^{2/n}} \int_t^{|\Omega|} \frac{f^{**}(s)}{s^{1-2/n}} ds = \frac1{n^2 \Omega_n^{2/n}} \int_t^{|\Omega|} \Big(\int_0^s f^*(r) dr\Big) s^{2/n-2} ds.
\]
Integration by parts then gives our desired estimate. If $k\geqslant 2$, then by denoting $u_k =f$ we have from Proposition \ref{estimateforsolution} that
\[
u_i^*(t) \leqslant \frac1{n^2 \Omega_n^{2/n}} \int_t^{|\Omega|} \frac{u_{i+1}^{**}(s)}{s^{1-2/n}} ds
\]
for all $i=0,1, ... ,k-1$. From this, the definition of the maximal function, and Fubini's theorem we conclude that
\[
u_i^{**}(t) = \frac 1 t \int_0^t u_i^*(s) ds  \leqslant \frac1{n^2 \Omega_n^{2/n}} \int_0^{|\Omega|} g(t,s) u_{i+1}^{**}(s) ds,
\]
where
\[
g(t,s) = 
\begin{cases}
t^{-1} s^{2/n}&\mbox{if $s < t$},\\
s^{2/n-1}&\mbox{if $s \geqslant t$}.
\end{cases}
\]
This helps us to conclude that
\begin{equation}\label{eq:ReasonToInduction}
u_{i-1}^*(t) \leqslant \frac1{(n \Omega_n^{1/n})^4} \int_t^{|\Omega|} r^{2/n-1} \Big( \int_0^{|\Omega|} g(r,s) u_{i+1}^{**}(s) ds \Big)  dr
\end{equation}
for all $i=1,2, ... ,k-1$. Now we consecutively define a sequence of functions $\{G_j\}_{j\geqslant 1}$ as follows:
\[
\begin{cases}
G_1(t,s) = g(t,s), & \\
G_i(t,s) = \displaystyle\int_0^{|\Omega|} G_{i-1}(t,s') g(s',s) ds' & \text{ for all } i\geqslant 2.
\end{cases}
\]
Letting $i=k-1$ in \eqref{eq:ReasonToInduction} we arrive at
\[
u_{k-2}^*(t) \leqslant \frac1{(n\Omega_n^{1/n})^{4}} \int_t^{|\Omega|} r^{2/n -1}\Big( \int_0^{|\Omega|} G_1 (r,s) f^{**}(s) ds \Big) dr.
\]
By repeating the calculation leading to \eqref{eq:ReasonToInduction}, we can prove by induction that
\[
u^*(t) \leqslant \frac1{(n\Omega_n^{1/n})^{2k}} \int_t^{|\Omega|} r^{2/n -1}\Big( \int_0^{|\Omega|} G_{k-1}(r,s) f^{**}(s) ds \Big) dr.
\]
Choose $R > 0$ such that $R^n \Omega_n = |\Omega|$. For $x\in B_R$, let us define
\[
g(x) = f^*(\Omega_n |x|^n)
\] 
and
\[
v(x) = \frac1{(n\Omega_n^{1/n})^{2k}} \int_{ \Omega_n|x|^n}^{|\Omega|} r^{2/n -1} \Big( \int_0^{|\Omega|} G_{k-1}(r,s) f^{**}(s) ds \Big) dr.
\]
Then the rearrangement function of $g$, being considered in $\R^n$, satisfies $g^* = f^*$ and
\[
v^*(t) = \frac1{(n\Omega_n^{1/n})^{2k}} \int_t^{|\Omega|} r^{2/n -1} \Big( \int_0^{|\Omega|} G_{k-1}(r,s) f^{**}(s) ds \Big) dr.
\]
A straightforward computation shows that
\[
\begin{cases}
(-\Delta)^k v = g & \text{ in } B_R, \\
\Delta^i v\big|_{ \partial B_R}  =0 & \text{  for } 0 \leqslant i  \leqslant k-1.
\end{cases}
\]
Now we extend $g$ to $\widetilde g$ in such a way that
\[
\widetilde g (x) = 
\begin{cases}
g(x) & \text{ in } B_R, \\
0 & \text{  in } \R^n \setminus B_R.
\end{cases}
\]
Recall that $n>2k$ and that Green's function of $(-\Delta)^k$ in $\Rset^n$ is 
$$ \frac{c_{n,k}}{(n-2k)n^{2k-1}\Omega_n}         |x-y|^{2k-n},$$ 
where the constant $c_{n,k}$ is as in the statement of the proposition. Therefore, if we define 
\[
w(x) = \frac{c_{n,k}}{(n-2k)n^{2k-1}\Omega_n}\int_{\R^n} |x-y|^{2k-n} \widetilde g(y) dy,
\]
then it is easy to verify that 
\[
(-\Delta)^k w = \widetilde g
\]
in $\R^n$. Furthermore, as $w$ is being expressed in terms of Riesz's potential, it is not hard to compute $(-\Delta)^i w$ to get
\[
(-\Delta)^i w \geqslant 0
\]
in $\R^n$ for all $1 \leqslant i \leqslant k-1$. Therefore, limiting ourselves to $B_R$ we obtain
\[
\begin{cases}
(-\Delta)^k w = g & \text{ in } B_R, \\
(-\Delta)^i w \big|_{ \partial B_R} \geqslant 0 & \text{ for } 0 \leqslant i  \leqslant k-1.
\end{cases}
\]
By a finite induction with a help from the maximum principle, we obtain $v(x) \leqslant w(x)$ for $x\in B_R$. Equivalently, there holds $v^*(t) \leqslant w^*(t) $ for any $t\in (0, |\Omega|)$. On the other hand, it follows from a result due to O'Neil \cite{O'Neil1963} that
\[
\begin{split}
w^{**} (t) \leqslant &\frac{c_{n,k}}{(n-2k)n^{2k-1}\Omega_n}
\left( 
\begin{split}
&\frac1t\int_0^t\Big(\frac{\Omega_n}s\Big)^{1-2k/n} ds \int_0^t g^*(s) ds \\
&+ \int_t^{+\infty} g^*(s) \Big(\frac{\Omega_n}s\Big)^{1-2k/n} ds
\end{split}
\right),
\end{split}
\]
which implies that
\begin{equation}\label{eq:wstarestimate}
\begin{split}
w^*(t) \leqslant &\frac{n}{n-2k} \frac{c_{n,k}}{(n\Omega_n^{1/n})^{2k}} \int_t^{|\Omega|} f^*(s) s^{2k/n -1} ds + \frac{c_{n,k+1}}{(n\Omega_n^{1/n})^{2k}}t^{2k/n-1} \int_0^t f^*(s) ds,
\end{split}
\end{equation}
since $g^*(s) =f^*(s)$ and $g^*(s) =0$ for $s > |\Omega|$. From this, Proposition \ref{estimateforsolution1} follows from \eqref{eq:wstarestimate} and the estimates $u^* \leqslant v^* \leqslant w^*$.
\end{proof}


\section{Adams inequality with exact growth: Proof of Theorem \ref{Maintheorem}}
\label{sec-AdamsWithExactGrowth}

In this section, we prove Theorem \ref{Maintheorem} by following the same lines as  in \cite{LTZ2015,MS2014}. In the following subsection, we introduce some crucial tools which shall be used in our proof. However, we first recall some widely used symbols. Here and in what follows, by $\lesssim$ and $\gtrsim$ we mean inequalities up to uniform and dimensional constants. If both $\lesssim$ and $\gtrsim$ occur, then we use the symbol $\sim$.

\subsection{Some crucial lemmas}

First, we recall following lemma whose proof can be found in \cite[Lemma 4.2]{LTZ2015}. 

\begin{lemma}\label{crucial2}
Given any sequence $a = (a_k)_{k\geqslant 0}$ and any $p > 1$ let us denote
\begin{align*}
\|a\|_1 =& \sum_{k=0}^{+\infty} |a_k|, \quad \|a\|_p = \Big(\sum_{k=0}^{+\infty} |a_k|^p \Big)^{1/p}, \quad \|a\|_{(e)} = \Big(\sum_{k=0}^{+\infty} |a_k|^p e^k\Big)^{1/p},
\end{align*} 
and 
\[
\mu(h) = \inf\{\|a\|_{(e)}\, :\, \|a\|_1 =h, \|a\|_p \leqslant 1\}.
\]
Then we have
\[
\mu(h) \sim \exp \big( h^{p/(p-1)}/p \big) h^{-1/(p-1)}
\]
for $h >1$
\end{lemma}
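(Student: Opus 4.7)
The plan is to sandwich $\mu(h)$ between two quantities of the form $c_p\,h^{-1/(p-1)}\exp(h^{p/(p-1)}/p)$, using an explicit trial sequence for the upper bound and a two-piece Hölder estimate for the lower bound, with the cutoff optimized to the same scale $N\approx h^{p/(p-1)}$ on both sides. This is the discrete analogue of the one-dimensional Moser-type optimization, and the scale $N\approx h^{p/(p-1)}$ is forced by balancing the $\ell^1$ and $\ell^p$ constraints.

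For the upper bound I would test with the simple block sequence $a_k = h/N$ for $0\le k\le N-1$ and $a_k=0$ otherwise, where $N:=\lceil h^{p/(p-1)}\rceil$. Then $\|a\|_1=h$ trivially, and $\|a\|_p^p = h^p/N^{p-1}\le 1$ by the choice of $N$. A direct computation gives
\[
\|a\|_{(e)}^p = (h/N)^p \sum_{k=0}^{N-1} e^k \;\le\; \frac{e}{e-1}\,(h/N)^p\, e^N,
\]
and combining $h/N\le h^{-1/(p-1)}$ with $e^{N/p}\le e^{1/p}\exp(h^{p/(p-1)}/p)$ yields $\mu(h)\lesssim h^{-1/(p-1)}\exp(h^{p/(p-1)}/p)$.

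For the lower bound, given any admissible $a$ I would split $\|a\|_1$ at a cutoff $N\in\mathbb N$ to be chosen momentarily. Hölder with exponents $p$ and $p'=p/(p-1)$ on $\{k<N\}$ gives $\sum_{k<N}|a_k|\le N^{(p-1)/p}\|a\|_p\le N^{(p-1)/p}$, while a weighted Hölder on $\{k\ge N\}$ gives
\[
\sum_{k\ge N}|a_k| \;=\; \sum_{k\ge N}\big(|a_k|\,e^{k/p}\big)\,e^{-k/p} \;\le\; \|a\|_{(e)}\Big(\sum_{k\ge N}e^{-k/(p-1)}\Big)^{(p-1)/p} \;\le\; C_p\,e^{-N/p}\,\|a\|_{(e)}.
\]
Adding the two pieces and solving for $\|a\|_{(e)}$,
\[
\|a\|_{(e)}\;\ge\;\frac{h-N^{(p-1)/p}}{C_p}\,e^{N/p}.
\]
Now taking $N=\lfloor h^{p/(p-1)}\rfloor -1$ and Taylor expanding $(h^{p/(p-1)}-\varepsilon)^{(p-1)/p}$ around $\varepsilon=0$ for bounded $\varepsilon$ gives $N^{(p-1)/p}\le h - c_p\,h^{-1/(p-1)}$ for some $c_p>0$ and all sufficiently large $h$, while at the same time $e^{N/p}\ge e^{-2/p}\exp(h^{p/(p-1)}/p)$. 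These two estimates combine to the matching lower bound $\mu(h)\gtrsim h^{-1/(p-1)}\exp(h^{p/(p-1)}/p)$.

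The only real subtlety, and the one I would highlight as the main obstacle, is the quantitative choice of $N$. We need $N^{(p-1)/p}$ strictly below $h$ by an amount of order exactly $h^{-1/(p-1)}$ (otherwise the factor $h-N^{(p-1)/p}$ collapses), and simultaneously $N$ within $O(1)$ of $h^{p/(p-1)}$ (otherwise $e^{N/p}$ loses the correct exponential factor). The binomial expansion of $(h^{p/(p-1)}-\varepsilon)^{(p-1)/p}$ shows that both targets are achieved at once, and in fact it is precisely this expansion that pinpoints the $h^{-1/(p-1)}$ prefactor in the asymptotic. Values of $h$ in any bounded range (where $\lfloor h^{p/(p-1)}\rfloor -1$ could fail to be nonnegative) are handled trivially because both sides of the claimed equivalence are bounded above and below by positive constants on such ranges.
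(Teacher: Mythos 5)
Your argument is correct. Note that the paper itself does not prove Lemma \ref{crucial2}: it is quoted verbatim from Lu--Tang--Zhu \cite{LTZ2015} (the discrete analogue of a lemma of Masmoudi--Sani), so there is no in-paper proof to compare against; your two-sided argument --- the block trial sequence $a_k=h/N$ with $N=\lceil h^{p/(p-1)}\rceil$ for the upper bound, and the H\"older split at $k=N$ combined with the concavity bound $(h^{p'}-1)^{1/p'}\leqslant h-\tfrac{1}{p'}h^{-1/(p-1)}$ for the lower bound --- is the standard route and is complete, including the correct identification of the exponent $p'/p=1/(p-1)$ in the geometric tail and the nonnegativity of $N=\lfloor h^{p/(p-1)}\rfloor-1$ for $h>1$.
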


Our first crucial lemma is the following.

\begin{lemma}\label{crucial1}
Let $p >1$ and let $u,f \in L^p((0, +\infty))$ be non-negative and decreasing functions such that
\begin{equation}\label{eq:giathiet}
u(t_1) -u(t_2) \leqslant c \int_{t_1}^{t_2} \frac{f(s)}{s^{1 -1/p}} ds
\end{equation}
for any $0<t_1 < t_2$ and $c$ is a positive constant. If $u(A) >1$ and
\[
\int_{A}^{+\infty} f(s)^{p} ds \leqslant \Big(\frac p{p-1}\Big)^{p},
\]
then
\[
\frac{\exp\Big(\Big(\frac{p-1}{cp}\Big)^{p/(p-1)} u(A)^{p/(p-1)} \Big)}{(u(A))^{p/(p-1)}} A \lesssim \int_A^{+\infty} u(s)^{p} ds.
\]
\end{lemma}

\begin{proof}
Denote $h_k = c_1 u(e^k A)$, where $c_1 = (p-1)/cp$. Define $a_k = h_k -h_{k+1} \geqslant 0$; hence
\[
\sum_{k=0}^{+\infty} |a_k| = h_0 = c_1 u(A).
\]
On one hand, it follows from \eqref{eq:giathiet} and H\"older's inequality that
\begin{align*}
a_k &= c_1(u(e^k A) - u(e^{k+1}A))\leqslant \frac{p-1}p \bigg(\int_{e^k A}^{e^{k+1}A} f(s)^{p} ds \bigg)^{1/p}.
\end{align*}
Consequently, we have
\[
\sum_{k=0}^{+\infty} |a_k|^{p} \leqslant \Big(\frac{p-1}p \Big)^{p} \int_{A}^{+\infty} f(s)^{p} ds \leqslant 1.
\]
On the other hand
\begin{align*}
\frac 1A \int_A^{+\infty} u(s)^{p} ds =&\sum_{k=0}^{+\infty} \frac 1A \int_{e^kA}^{e^{k+1}A} u(s)^{p} ds \\
\geqslant & \sum_{k=0}^{+\infty} u(e^{k+1})^{p}e^k(e-1) \\
\geqslant & \frac{e-1}e\sum_{k=1}^{+\infty} a_k^{p} e^k.
\end{align*}
Therefore,
\begin{equation}\label{eq:a111}
\|a\|_{(e)}^{p} = a_0^{p} +\sum_{k=1}^{+\infty} a_k^{p} e^k \lesssim h_0^{p} + \frac 1A \int_A^{+\infty} u(s)^{p} ds.
\end{equation}
Next we estimate $h_0$. To do this, we choose $ b = (c_1/2)^{p/(p-1)}$; hence for any $1\leqslant r\leqslant e^b$, we have
\begin{align*}
h_0 -c_1 u(r A)&\leqslant \frac{p-1}{p}\int_{A}^{e^bA} \frac{f(s)}{s^{1-1/p}} ds\\
&\leqslant \frac{p-1}p \bigg(\int_{A}^{e^bA} f(s)^{p} ds\bigg)^{1/p} b^{1-1/p}\\
&\leqslant \frac{c_1}2 \leqslant \frac{h_0}2,
\end{align*}
here we have used the inequality $u(A) > 1$. From this, we easily get $h_0 \leqslant 2 c_1 u(rA)$ for any $1\leqslant r \leqslant e^b$. Therefore,
\begin{equation}\label{eq:s111}
\frac 1A \int_A^{+\infty} u(s)^{p} ds \geqslant \frac 1A \int_A^{e^b A} u(s)^{p} ds \geqslant \Big(\frac{h_0}{2c_1}\Big)^{p} (e^b -1)\gtrsim h_0^{p}.
\end{equation}
Combining \eqref{eq:s111} and \eqref{eq:a111} gives
\[
\|a\|_{(e)}^{p} \lesssim \frac 1A \int_A^{+\infty} u(s)^{p} ds.
\]
By Lemma \ref{crucial2}, we obtain
\[
\|a\|_{(e)}^{p} \gtrsim h_0^{-p/(p-1)} \exp(h_0^{p/(p-1)}) \gtrsim (u(A))^{-p/(p-1)} \exp \big( (c_1 u(A))^{p/(p-1)} \big).
\]
This completes the proof of Lemma \ref{crucial1}.
\end{proof}

Next our second crucial lemma is the following well-known lemma due to Adams, which plays a crucial role in \cite{Adams1988}.

\begin{lemma}\label{Adamslemma}
Let $p > 1$ and $p' =p/(p-1)$. Let also $a(s,t)$ be a nonnegative measurable function on $\R \times [0, +\infty)$ such that $a(s,t) \leqslant 1$ for $0< s < t$ and
\[
\sup_{t >0} \bigg(\int_{-\infty}^0 + \int_t^{+\infty} \bigg)a(s,t)^{p'} ds ^{1/p'} = b < +\infty.
\]
Then there exists a constant $c_0$ depending only on $p$ and $b$ such that
\[
\int_0^{+\infty} e^{-F(t)} dt < c_0
\]
for any non-negative function $\phi$ satisfying $\int_\R \phi(t)^p dt \leqslant 1$ with
\[
F(t) = t - \Big(\int_\R a(s,t) \phi(s) ds \Big)^{p'}.
\]
\end{lemma}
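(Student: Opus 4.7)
This is the classical lemma of Adams from \cite{Adams1988}, and I will sketch the two-scale decomposition on which the standard proof is based. The key observation is that $a(s,t)$ is controlled in two different ways on the two regions $(0,t)$ and $\R\setminus(0,t)$: pointwise by $1$ on the former, and in $L^{p'}$ by $b$ on the latter. For each fixed $t>0$, applying H\"older's inequality separately on these two pieces gives
$$\int_\R a(s,t)\phi(s)\,ds \leqslant t^{1/p'}\lambda(t)^{1/p} + b\bigl(1-\lambda(t)\bigr)^{1/p},$$
where $\lambda(t):=\int_0^t \phi^p\,ds\in[0,1]$ (using $\int_\R \phi^p\leqslant 1$, so that $\int_{\R\setminus(0,t)}\phi^p\leqslant 1-\lambda(t)$).

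Next I would raise this to the $p'$-th power via the elementary refinement $(A+B)^{p'}\leqslant (1+\varepsilon)A^{p'}+C_\varepsilon B^{p'}$, which is valid for $p'\geqslant 1$ and any $\varepsilon>0$. This yields the pointwise lower bound
$$F(t) \geqslant t\bigl(1 - (1+\varepsilon)\lambda(t)^{p'/p}\bigr) - C_\varepsilon b^{p'}\bigl(1-\lambda(t)\bigr)^{p'/p}.$$
I would then split $[0,\infty)$ into the sub-level set $\{t:\lambda(t)\leqslant 1-\eta\}$ and its complement. On the first set, choosing $\varepsilon$ small enough that $(1+\varepsilon)(1-\eta)^{p'/p}\leqslant 1-\eta/2$ gives $F(t)\geqslant (\eta/2)\,t - C$, and $e^{-F}$ is therefore exponentially integrable there with a bound depending only on $p,b,\eta$.

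The main obstacle is the remaining set $\{t:\lambda(t)>1-\eta\}$, where the coefficient of $t$ in $F$ no longer provides decay. The standard resolution is to change variables from $t$ to $r=\lambda(t)$, using $dr=\phi(t)^p\,dt$. Because $\lambda(t)>1-\eta$ forces $r\in(1-\eta,1]$, the defect term $(1-\lambda(t))^{p'/p}$ controls both the size of the integrand and, through the Jacobian $\phi(t)^p$, the $t$-measure of the relevant set; one has to balance these two uses of the same small quantity very carefully. This delicate accounting is the technical heart of Adams' argument and is carried out in \cite{Adams1988}; once pushed through, the resulting constant $c_0$ depends only on $p$ and $b$, precisely through the constants appearing in the two H\"older inequalities and the $(A+B)^{p'}$ bound.
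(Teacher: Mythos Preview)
The paper does not prove this lemma at all; it is quoted from Adams' original paper \cite{Adams1988} and used as a black box, so there is no in-paper argument to compare against.

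Your sketch correctly reproduces the opening move of Adams' proof---the two-region H\"older split giving
\[
\int_\R a(s,t)\phi(s)\,ds \;\leqslant\; t^{1/p'}\lambda(t)^{1/p}+b\bigl(1-\lambda(t)\bigr)^{1/p},
\]
and your treatment of the regime $\lambda(t)\leqslant 1-\eta$ is fine. The description of the hard regime, however, has a genuine gap. The change of variables $r=\lambda(t)$ does not close the argument as written: the Jacobian $dt=dr/\phi(t)^p$ is not controlled by $1-r$, since $\phi(t)$ can be arbitrarily small pointwise even while $\int_{\R\setminus(0,t)}\phi^p$ is small. Thus the finiteness of the $r$-interval $(1-\eta,1]$ does not by itself bound the $t$-integral, and the ``balance'' you allude to requires an extra ingredient you have not supplied.

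Adams' actual treatment of this regime is different. After the same H\"older split he observes, via the discrete H\"older inequality applied to $(t^{1/p'},b)$ against $(\lambda^{1/p},(1-\lambda)^{1/p})$, that $g(t)^{p'}\leqslant t+b^{p'}$ and hence $F(t)\geqslant -b^{p'}$ uniformly. He then shows that on the level set $\{F\leqslant\gamma\}$ one has a pointwise constraint of the form $t\,(1-\lambda(t))\leqslant C(p,b)(1+\gamma_+)$; combined with the monotonicity of $\lambda$, this yields control on the measure of the level sets sufficient to integrate $e^{-F}$. Since you ultimately defer to \cite{Adams1988} for the hard step anyway, your sketch is acceptable as a pointer to the literature, but the mechanism you describe for the critical regime is not the one Adams uses and would not work as stated.
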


\subsection{Proof of Theorem \ref{Maintheorem}}

We are now in a position to prove Theorem \ref{Maintheorem}. For clarity, we divide our proof into several parts located in a few subsubsections below.

\subsubsection{Proof of \eqref{eq:Adamsexact}}

Using a density argument, we only need to prove Theorem \ref{Maintheorem} for functions in $C_0^\infty(\H^n)$. By the property of rearrangement, we have
\[
\int_{\H^n} \frac{\Phi_{n,2}(\beta(n,2) |u|^{n/(n-2)})}{(1+|u|)^{n/(n-2)}} dV_g = \int_{\H^n} \frac{\Phi_{n,2}(\beta(n,2) |u^\sharp|^{n/(n-2)})}{(1+|u^\sharp|)^{n/(n-2)}} dV_g
\]
and
\[
\|u\|_{n/2}^{n/2} = \|u^\sharp\|_{n/2}^{n/2}.
\]
Therefore, it suffices to prove that
\begin{equation}\label{eq:usharp}
\int_{\H^n} \frac{\Phi_{n,2}(\beta(n,2) |u^\sharp|^{n/(n-2)})}{(1+|u^\sharp|)^{n/(n-2)}} dV_g   \lesssim   \|u^\sharp\|_{n/2}^{n/2}.
\end{equation}
To this purpose, we will split the integral appearing in \eqref{eq:usharp} into two parts as done in \cite{LTZ2015,MS2014} as follows
\[\begin{split}
\int_{\H^n} & \frac{\Phi_{n,2}(\beta(n,2) |u^\sharp|^{n/(n-2)})}{(1+|u^\sharp|)^{n/(n-2)}} dV_g \\
&= \bigg( \int_{B(0,R_0)} + \int_{\H^n\setminus B(0,R_0)} \bigg) \frac{\Phi_{n,2}(\beta(n,2) |u^\sharp|^{n/(n-2)})}{(1+|u^\sharp|)^{n/(n-2)}} dV_g,
\end{split}\]
where 
\[
R_0 = \inf\{r \geqslant 0\, :\, u^*(|B(0,r)|) \leqslant 1\} \in [0, +\infty).
\] 
Our aim is to estimate the two integrals term by term. To estimate the integral $\int_{\H^n\setminus B(0,R_0)}$, we observe that 
\[
\left\{
\begin{gathered}
u^*(|B(0,r)|) > 1 \quad \text{ when } r < R_0, \hfill \\
u^*(|B(0,R_0)|) = 1, \hfill \\
u^*(|B(0,r)|)\leqslant 1 \quad \text{ when } r > R_0. \hfill 
\end{gathered}
\right.
\]
Since $\Phi_{n,2}(\beta(n,2)x^{n/(n-2)}) \leqslant C x^{n/2}$ for $0\leqslant x\leqslant 1$, we conclude that
\begin{equation}\label{eq:outsideball}
\begin{split}
\int_{\H^n\setminus B(0,R_0)} & \frac{\Phi_{n,2}(\beta(n,2) |u^\sharp|^{n/(n-2)})}{(1+|u^\sharp|)^{n/(n-2)}} dV_g  \lesssim   \int_{\H^n\setminus B(0,R_0)} (u^{\sharp})^{n/2} dV_g \lesssim  \|u\|_{n/2}^{n/2}.
\end{split}
\end{equation}
We next consider the integral $\int_{B(0,R_0)}$. For simplicity, we denote 
$$f = -\Delta_g u$$ 
in $\H^n$ and 
$$\alpha = \int_0^{+\infty} (f^{**}(s))^{n/2} ds.$$ 
Clearly $f \in L^{n/2} (\H^n)$. Then by Lemma \ref{Hardy}, we obtain
\[
\alpha \leqslant \Big(\frac n{n-2}\Big)^{n/2} \int_0^{+\infty} [f^*(s)]^{n/2} ds = \Big(\frac n{n-2}\Big)^{n/2} \|\Delta_g u\|_{n/2}^{n/2} \leqslant \Big(\frac n{n-2}\Big)^{n/2}.
\]
Fix $\epsilon_0 \in (0, 1)$ and choose $R_1$ in such a way that
\[
\int_{0}^{|B(0,R_1)|} [f^{**}(s)]^{n/2} ds \leqslant \alpha \epsilon_0,\quad \int_{|B(0,R_1)|}^{+\infty} [f^{**}(s)]^{n/2} ds \leqslant \alpha (1-\epsilon_0).
\]
By applying Proposition \ref{estimateforsolution} and H\"older's inequality, we have
\[
u^*(t_1) -u^*(t_2) \leqslant \frac1{(n \Omega_n^{1/n})^2} \Big(\int_{t_1}^{t_2} [f^{**}(s)]^{n/2} ds\Big)^{2/n} \Big(\log \frac{t_2}{t_1}\Big)^{1 -2/n},
\]
for any $0 < t_1 < t_2$. Therefore,
\begin{equation}\label{eq:insideballR1}
u^*(|B(0,r_1)|) -u^*(|B(0,r_2)|) \leqslant \frac {(\alpha \epsilon_0)^{2/n}} {(n \Omega_n^{1/n})^2} \Big(\log \frac{|B(0,r_2)|}{|B(0,r_1)|}\Big)^{1 -2/n}
\end{equation}
for any $0< r_1 < r_2 < R_1$ and
\begin{equation}\label{eq:outsideballR1}
u^*(|B(0,r_1)|) -u^*(|B(0,r_2)|) \leqslant \frac {(\alpha (1-\epsilon_0))^{2/n}} {(n \Omega_n^{1/n})^2} \Big(\log \frac{|B(0,r_2)|}{|B(0,r_1)|}\Big)^{1 -2/n}
\end{equation}
for any $r_2 > r_1 > R_1$. In order to estimate the integral $\int_{B(0,R_0)}$, we need to consider the two cases: $R_1 \geqslant R_0$ and $R_1 < R_0$.

\smallskip\noindent{\bf Case 1: Suppose $R_1\geqslant R_0$.} By \eqref{eq:insideballR1}, we obtain
\[
u^*(|B(0,r)|) \leqslant 1 + \frac {(\alpha \epsilon_0)^{2/n}}{(n \Omega_n^{1/n})^2} \Big(\log \frac{|B(0,R_0)|}{|B(0,r)|}\Big)^{1 -2/n}
\]
for any $0< r\leqslant R_0$. For $\epsilon > 0$ to be determined later, by using the elementary inequality
$
(1+ s^{(n-2)/n})^{n/(n-2)} \leqslant (1+\epsilon) s +C_\epsilon
$ 
for $s \geqslant 0$ with $C_\epsilon  = [1-(1+\epsilon )^{-(n-2)/2}]^{-2/(n-2)}$, we get
\[
[u^*(|B(0,r)|)]^{n/(n-2)} \leqslant (1+\epsilon ) \frac{(\alpha\epsilon _0)^{2/(n-2)}}{(n\Omega_n^{1/n})^{2n/(n-2)}}\log \frac{|B(0,R_0)|}{|B(0,r)|} + C_\epsilon .
\]
We now choose $\epsilon  = 1 -\epsilon _0^{2/(n-2)}$. Clearly, $(1+\epsilon ) \epsilon _0^{2/(n-2)} <1$. Since $\alpha \leqslant (n/(n-2))^{n/2}$ and $$\beta(n,2) (n\Omega_n^{1/n})^{-2n/(n-2)} = ((n-2)/n )^{n/(n-2)},$$ we know that
\begin{align}\label{eq:insideballcase1}
\int_{B(0,R_0)} & \frac{\Phi_{n,2}(\beta(n,2) |u^\sharp|^{n/(n-2)})}{(1+|u^\sharp|)^{n/(n-2)}} dV_g \notag\\
& \leqslant \int_{B(0,R_0)} \exp(\beta(n,2) |u^\sharp|^{n/(n-2)}) dV_g\notag\\
&=n\Omega_n \int_0^{R_0} \exp(\beta(n,2) |u^*(|B(0,r)|)|^{n/(n-2)}) \sinh^{n-1}(r) dr\notag\\
&\leqslant e^{\beta(n,2) C_\epsilon } n\Omega_n \int_0^{R_0} \exp \Big( (1+\epsilon )\epsilon _0^{2/(n-2)} \log \frac{|B(0,R_0)|}{|B(0,r)|}\Big) \sinh^{n-1}(r) dr\notag\\
&= e^{\beta(n,2) C_\epsilon } |B(0,R_0)|^{(1+\epsilon )\epsilon _0^{2/(n-2)}} \int_0^{|B(0,R_0)|} s^{-(1+\epsilon )\epsilon _0^{2/(n-2)}} ds\notag\\
&\lesssim |B(0,R_0)|\notag\\
&\lesssim n\Omega_n \int_0^{R_0} u^*(|B(0,r)|)^{n/2} \sinh^{n-1}(r) dr\notag\\
&\lesssim \|u\|_{n/2}^{n/2}.
\end{align}
From this we get the desired inequality when $R_1 \geqslant R_0$, thanks to \eqref{eq:outsideball} and \eqref{eq:insideballcase1}.

\smallskip\noindent{\bf Case 2: Suppose $R_1 < R_0$.} We split the integral $\int_{B(0,R_0)}$ into two parts as follows
\begin{align*}
\int_{B(0,R_0)} &\frac{\Phi_{n,2}(\beta(n,2) |u^\sharp|^{n/(n-2)})}{(1+|u^\sharp|)^{n/(n-2)}} dV_g \\
& = \bigg( \int_{B(0,R_0)\setminus B(0,R_1)} + \int_{B(0,R_1)} \bigg) \frac{\Phi_{n,2}(\beta(n,2) |u^\sharp|^{n/(n-2)})}{(1+|u^\sharp|)^{n/(n-2)}} dV_g.
\end{align*}
An estimate for the integral on $B(0,R_0)\setminus B(0,R_1)$ is easy to get. In fact, by the inequality \eqref{eq:outsideballR1}, we have
\[
u^*(|B(0,r)|) \leqslant 1 + \frac {(\alpha(1- \epsilon_0))^{2/n}} {(n \Omega_n^{1/n})^2} \Big(\log \frac{|B(0,R_0)|}{|B(0,r)|}\Big)^{1 -2/n}
\]
for any $R_1 < r < R_0$. Let $\epsilon _1 = 1 -(1-\epsilon _0)^{2/(n-2)}$. Clearly, $(1+\epsilon _1)(1-\epsilon _0)^{2/(n-2)} < 1$. Similar to Case $1$ above, we have
\[
u^*(|B(0,r)|)^{n/(n-2)} \leqslant (1+\epsilon _1)\frac { (\alpha(1- \epsilon_0))^{2/(n-2)}} {(n \Omega_n^{1/n})^{2n/(n-2)}} \log \frac{|B(0,R_0)|}{|B(0,r)|} + C_{\epsilon _1}.
\]
Hence
\begin{align}\label{eq:insideballcase2}
\int_{B(0,R_0)\setminus B(0,R_1)} & \frac{\Phi_{n,2}(\beta(n,2) |u^\sharp|^{n/(n-2)})}{(1+|u^\sharp|)^{n/(n-2)}} dV_g\notag\\
&\leqslant \int_{B(0,R_0)\setminus B(0,R_1)} \exp(\beta(n,2) [u^\sharp]^{n/(n-2)}) dV_g\notag\\
&\leqslant n\Omega_n \int_{R_1}^{R_0} \exp(\beta(n,2) u^*(|B(0,r)|)^{n/(n-2)})\sinh^{n-1}(r) dr\notag\\
&\lesssim n\Omega_n \int_{R_1}^{R_0} \Big( \frac{|B(0,R_0)|}{|B(0,r)|} \Big) ^{(1+\epsilon _1)(1-\epsilon _0)^{2/(n-2)}} \sinh^{n-1}(r) dr\notag\\
&\lesssim |B(0,R_0)|^{(1+\epsilon _1)(1-\epsilon _0)^{2/(n-2)}} \int_{|B(0,R_1)|}^{|B(0,R_0)|} s^{-(1+\epsilon _1)(1-\epsilon _0)^{2/(n-2)}} ds\notag\\
&\lesssim |B(0,R_0)|\notag\\
&\lesssim n\Omega_n \int_0^{R_0} u^*(|B(0,r)|)^{n/2} \sinh^{n-1}(r) dr\notag\\
&\lesssim \|u\|_{n/2}^{n/2}.
\end{align}
Next we estimate the integral on $B(0,R_1)$. Note that when $0< r < R_1$ we can write
\[
u^*(|B(0,r)|) =\big [u^*(|B(0,r)|) -u^*(|B(0,R_1)|) \big] + u^*(|B(0,R_1)|)
\]
and apply Proposition \ref{estimateforsolution} to get
\begin{align*}
u^*(|B(0,r)|)^{n/(n-2)} \leqslant &(1+\epsilon _2) \big[u^*(|B(0,r)|) -u^*(|B(0,R_1)|) \big]^{n/(n-2)} \\
& + C_{\epsilon _2} \big[u^*(|B(0,R_1)|) \big]^{n/(n-2)}\\
\leqslant & (1+\epsilon _2) \bigg(\frac1{(n\Omega_n^{1/n})^2} \int_{|B(0,r)}^{|B(0,R_1)|} \frac{f^{**}(s)}{s^{1-2/n}} ds\bigg)^{n/(n-2)} \\
& + C_{\epsilon _2} [u^*(|B(0,R_1)|)]^{n/(n-2)}
\end{align*}
for some positive constant $\epsilon_2$ to be specified later. Recall that $\beta(n,2) = (n \Omega_n^{1/n})^{2n/(n-2)} ((n-2)/n)^{n/(n-2)}$. Therefore,
\begin{align*}
\int_{B(0,R_1)} & \frac{\Phi_{n,2}(\beta(n,2) |u^\sharp|^{n/(n-2)})}{(1+|u^\sharp|)^{n/(n-2)}} dV_g\notag\\
\lesssim &\frac{n\Omega_n}{[u^*(|B(0,R_1)|)]^{n/(n-2)}}   \int_0^{R_1} \exp\Big(\beta(n,2)u^*(|B(0,r)|)^{n/(n-2)}\Big) \sinh^{n-1}(r) dr\notag\\
\lesssim &\frac{n\Omega_n \exp\left(C_{\epsilon _2} [u^*(|B(0,R_1)|)]^{n/(n-2)}\right)}{[u^*(|B(0,R_1)|)]^{n/(n-2)}}\notag\\
& \times \int_0^{R_1}\exp\left(\left[
\begin{gathered}
(1+\epsilon _2)^{(n-2)/n} \frac{n-2}n
 \int_{|B(0,r)|}^{|B(0,R_1)|} \frac{f^{**}(s)}{s^{1-2/n}} ds
\end{gathered}
\right]^{n/(n-2)}\right) \sinh^{n-1}(r) dr\notag\\
= &\frac{\exp (C_{\epsilon _2} [u^*(|B(0,R_1)|)]^{n/(n-2)} )}{[u^*(|B(0,R_1)|)]^{n/(n-2)}} \\
& \times \int_0^{|B(0,R_1)|}\exp\left(\left[
\begin{gathered}
(1+\epsilon _2)^{(n-2)/n} \frac{n-2}n \int_{r}^{|B(0,R_1)|} \frac{f^{**}(s)}{s^{1-2/n}} ds
\end{gathered}
\right]^{n/(n-2)}\right) dr.
\end{align*}
Using the change of variables $r = e^{-t} |B(0,R_1)|$, we have
\begin{align}\label{eq:insideballR11}
\int_{B(0,R_1)} & \frac{\Phi_{n,2}(\beta(n,2) |u^\sharp|^{n/(n-2)})}{(1+|u^\sharp|)^{n/(n-2)}} dV_g \nonumber\\
\lesssim& |B(0,R_1)| \frac{\exp\left(C_{\epsilon _2} [u^*(|B(0,R_1)|)]^{n/(n-2)}\right)}{[u^*(|B(0,R_1)|)]^{n/(n-2)}} \nonumber\\
& \times \int_0^{+\infty}\exp\left(\left[
\begin{gathered}
(1+\epsilon _2)^{(n-2)/n} \frac{n-2}n 
\int_{e^{-t}|B(0,R_1)|}^{|B(0,R_1)|} \frac{f^{**}(s)}{s^{1-2/n}} ds
\end{gathered}
\right]^{n/(n-2)}\right) e^{-t} dt \nonumber\\
 \lesssim& |B(0,R_1)| \frac{\exp (C_{\epsilon _2}\beta(n,2) [u^*(|B(0,R_1)|)]^{n/(n-2)} )}{[u^*(|B(0,R_1)|)]^{n/(n-2)}} \nonumber\\
& \times \int_0^{+\infty}\exp\left(\left[
\begin{gathered}
|B(0,R_1)|^{2/n}(1+\epsilon _2)^{(n-2)/n} \frac{n-2}n  \\
\times \int_{0}^{t} f^{**}(|B(0,R_1)|e^{-s}) e^{-\frac{2s}n} ds
\end{gathered}
\right]^{n/(n-2)}\right) e^{-t} dt.
\end{align}
Now define
\[
\varphi(t) = |B(0,R_1)|^{2/n}(1+\epsilon _2)^{(n-2)/n} \frac{n-2}n f^{**}(|B(0,R_1)|e^{-t}) e^{-2t/n} \chi_{\{t >0\}}.
\]
Then by the choice of $R_1$, we get
\begin{align*}
\int_\R \varphi(t)^{n/2}dt &= |B(0,R_1)|(1+\epsilon _2)^{(n-2)/n} \Big(\frac{n-2}n\Big)^{n/2} \int_0^{+\infty} f^{**}(|B(0,R_1)|e^{-t})^{n/2} e^{-t} dt\\
&= (1+\epsilon _2)^{(n-2)/n} \Big(\frac{n-2}n\Big)^{n/2} \int_0^{|B(0,R_1)|} [f^{**}(s)]^{n/2} ds\\
&\leqslant \epsilon _0 (1+\epsilon _2)^{(n-2)/n}.
\end{align*}
We now choose $\epsilon _2 = \epsilon _0^{-2/(n-2)} -1$. Clearly, $\int_\R \varphi(t)^{n/2} dt \leqslant 1$. Setting $a(s,t) = \chi_{(0,t)}(s)$. By \eqref{eq:insideballR11} and Lemma \ref{Adamslemma}, we have 
\begin{align*}
\int_{B(0,R_1)} & \frac{\Phi_{n,2}(\beta(n,2) |u^\sharp|^{n/(n-2)})}{(1+|u^\sharp|)^{n/(n-2)}} dV_g \\
& \lesssim |B(0,R_1)| \frac{\exp (C_{\epsilon _2}\beta(n,2) [u^*(|B(0,R_1)|)]^{n/(n-2)} )}{[u^*(|B(0,R_1)|)]^{n/(n-2)}}.
\end{align*}
Note that $C_{\epsilon _2} = (1-\epsilon _0)^{-2/(n-2)}$, therefore
\begin{align*}
\int_{B(0,R_1)} & \frac{\Phi_{n,2}(\beta(n,2) |u^\sharp|^{n/(n-2)})}{(1+|u^\sharp|)^{n/(n-2)}} dV_g \\
 & \lesssim |B(0,R_1)| \frac{\exp\big((1-\epsilon _0)^{-2/(n-2)}\beta(n,2) [u^*(|B(0,R_1)|)]^{n/(n-2)}\big)}{[u^*(|B(0,R_1)|)]^{n/(n-2)}}
\end{align*}
Recall that 
\[
\int_{|B(0,R_1)|}^{+\infty} [f^{**}(s)]^{n/2} ds \leqslant (n/(n-2))^{n/2} (1-\epsilon _0).
\] 
Applying Lemma \ref{crucial1} to the functions $u^*(1-\epsilon )^{-2/n}$, $f^{**}(1-\epsilon )^{-2/n}$, $p=n/2$, $c =(n\Omega_n^{1/n})^{-2}$, and $A = |B(0,R_1)|$, we then have
\begin{align*}
|B(0,& R_1)| \exp\big((1-\epsilon _0)^{-2/(n-2)} \beta(n,2) [u^*(|B(0,R_1)|)]^{n/(n-2)}\big) \\
 \times & [u^*(|B(0,R_1)|)]^{-n/(n-2)} \lesssim (1-\epsilon _0)^{-n/(n-2)}\int_{|B(0,R_1)|}^{+\infty} [u^*(s)]^{n/2} ds \lesssim \|u\|_{n/2}^{n/2}.
\end{align*}
Therefore, putting these estimates together, we have just shown that
\begin{equation}\label{eq:insideballR12}
\int_{B(0,R_1)} \frac{\Phi_{n,2}(\beta(n,2) |u^\sharp|^{n/(n-2)})}{(1+|u^\sharp|)^{n/(n-2)}} dV_g\lesssim \|u\|_{n/2}^{n/2}.
\end{equation}
Combining \eqref{eq:insideballR12} and \eqref{eq:insideballcase2} finishes our proof of Case $2$, and hence completes our proof of inequality \eqref{eq:Adamsexact}.

\subsubsection{The sharpness of (\ref{eq:Adamsexact})} 

It remains to prove the sharpness of Theorem \ref{Maintheorem}. To see this, let us consider the sequence of functions $\{v_m\}_m$ given as follows
\begin{equation*}
v_m(x) = 
\begin{cases}
\Big(\dfrac {\log m}{\beta(n,2)}\Big)^{1-2/n} +\dfrac n2\beta(n,2)^{2/n-1} \dfrac{1 -m^{2/n}|x|^2}{ (\log m )^{2/n}} &\mbox{if $0\leqslant |x| \leqslant m^{-1/n}$},\\
-n \beta(n,2)^{2/n -1} (\log m )^{-2/n} \log |x| &\mbox{if $m^{-1/n} \leqslant |x| \leqslant 1$},\\
\xi_{m}(x) &\mbox{if $|x| >1$,}
\end{cases}
\end{equation*}
where $\xi_m \in C_0^\infty(\R^n)$ is a radial function such that $\xi_m(x) =0$ if $|x|\geqslant 2$, and 
\[
\xi_m\big|_{\{|x|=1\}} = 0, \quad \frac{\partial\xi_m}{\partial r}\Big|_{\{|x|=1\}} = -n \beta(n,2)^{2/n -1}  (\log m )^{-2/n},
\]
and $\xi_m$, $|\nabla \xi_m|$ and $\Delta \xi_m$ are all $O((\log m)^{-2/n})$. The choice of this sequence is inspired by similar sequences used in \cite{MS2014} and in \cite{LTZ2015} for the case of $\R^n$.

Following the idea in \cite{Kar2015}, let us define $\widetilde{v}_m(x) = v_m(3x)$, which then implies that $\widetilde{v}_m \in W^{2,n/2}(\H^n)$ for all $m$. Moreover, we can readily check that
\[
\|\widetilde{v}_m\|_{n/2}^{n/2} = O\Big(\frac 1{\log m}\Big)
\]
and 
\[
1\leqslant \|\Delta_g\widetilde{v}_m\|_{n/2}^{n/2}\leqslant 1 + O\Big(\frac 1{\log m}\Big).
\]
Setting $u_m = \widetilde{v}_m \|\Delta_g\widetilde{v}_m\|_{n/2}^{-1}$, we obtain $\|u_m\|_{n/2}^{n/2} \leqslant O\big(1/\log m\big)$ and $\|\Delta_g u_m\|_{n/2}^{n/2} = 1$. Moreover, for any $\beta > 0$ and $p >0$, we have
\begin{align*}
\int_{\H^n} & \frac{\Phi_{n,2}(\beta u_m^{n/(n-2)})}{(1+|u|)^p} dV_g \\
&\geqslant \int_{\{|x|\leqslant 3^{-1}m^{-1/n}\}} \frac{\Phi_{n,2}(\beta u_m^{n/(n-2)})}{(1+|u|)^p} dV_g\\
&\gtrsim (\log m)^{-(n-2)p/n} \int_{\{|x|\leqslant 3^{-1}m^{-1/n}\}} \exp\Big(\frac{\beta}{\beta(n,2)} \frac{\log m}{ (1 + O (1 / \log m) )^{n/(n-2)}} \Big) dV_g\\
&\gtrsim (\log m)^{-(n-2)p/n} \int_0^{3^{-1} m^{-1/n}} \exp\Big(\frac{\beta}{\beta(n,2)} \log m \Big) r^{n-1} dr\\
&\geqslant (\log m)^{-(n-2)p/n} \exp\Big(\Big(\frac{\beta}{\beta(n,2)}-1\Big) \log m \Big).
\end{align*}
Therefore, we get
\[
\frac1{\|u_m\|_{n/2}^{n/2}}\int_{\H^n} \frac{\Phi_{n,2}(\beta u_m^{n/(n-2)})}{(1+|u|)^p} dV_g \gtrsim (\log m)^{1-(n-2)p/n} \exp\Big(\Big(\frac{\beta}{\beta(n,2)}-1\Big) \log m \Big) .
\]
This shows that if $ \beta > \beta(n,2)$ or $\beta = \beta(n,2)$ and $ p<n/(n-2)$, then 
\[
\lim\limits_{m\to +\infty} \frac1{\|u_m\|_{n/2}^{n/2}}\int_{\H^n} \frac{\Phi_{n,2}(\beta u_m^{n/(n-2)})}{(1+|u|)^p} dV_g = +\infty.
\]
This proves the sharpness of Theorem \ref{Maintheorem}.


\section{Adams-type inequalities: Proof of Theorems \ref{AdachiTanakatype}, \ref{Adamsnormtau} and \ref{improvedversion}}
\label{sec-VariousAdamsType}

\subsection{Proof of Theorem \ref{AdachiTanakatype}} 

We shall prove that this theorem is simply a consequence of Theorem \ref{Maintheorem}. Indeed, for any $u\in W^{2,n/2}(\H^n)$ such that $\|\Delta_g u\|_{n/2} \leqslant 1$, we denote
\[
\Omega =\{x\in \H^n\, :\, |u(x)| >1\}.
\]
In $\Omega^c$, we have $|u| \leqslant 1$. Then by the definition of $\Phi_{n,2}$ we have
\begin{align*}
\Phi_{n,2}(\alpha |u|^{n/(n-2)}) = & \sum_{j=j_{n/2}-1}^{+\infty} \frac{\alpha^j}{j!} |u|^{jn/(n-2)} \\
\leqslant & |u|^{n/2}\sum_{j=j_{n/2}-1}^{+\infty} \frac{\alpha^j}{j!} \\
\leqslant & e^{\alpha} |u|^{n/2} \leqslant e^{\beta(n,2)}|u|^{n/2}.
\end{align*}
(Note that $(j_{n/2}-1)n/(n-2) \geqslant n/2$.) Therefore,
\begin{equation}\label{eq:est1}
\int_{\Omega^c} \Phi_{n,2}(\alpha |u|^{n/(n-2)}) dV_g \leqslant e^{\beta(n,2)}\|u\|_{n/2}^{n/2} \leqslant \frac{\beta(n,2) \exp \big( \beta(n,2) \big)}{\beta(n,2)-\alpha} \|u\|_{n/2}^{n/2}.
\end{equation}
In $\Omega$ we have $|u| \geqslant 1$, then 
\[
\frac{\Phi_{n,2}(\beta(n,2) |u|^{n/(n-2)})}{(1+|u|)^{n/(n-2)}} \gtrsim \exp(\beta(n,2) |u|^{n/(n-2)}) |u|^{-n/(n-2)}.
\]
Using the elementary inequality $e^{-t} \leqslant e^{-1} t^{-1}$ for any $t >0$ and Theorem \ref{Maintheorem}, we have
\begin{align}\label{eq:est2}
\int_{\Omega} \Phi_{n,2}(\alpha |u|^{n/(n-2)}) dV_g &\leqslant \int_{\Omega} \exp(\alpha |u|^{n/(n-2)}) dV_g\notag\\
&=\int_{\Omega} \exp(\beta(n,2) |u|^{n/(n-2)}) \exp(-(\beta(n,2)-\alpha) |u|^{n/(n-2)}) dV_g \notag\\
&\leqslant \frac{1}{e(\beta(n,2) -\alpha)} \int_{\Omega} \exp(\beta(n,2) |u|^{n/(n-2)}) |u|^{-n/(n-2)} dV_g\notag\\
&\lesssim \frac{1}{e(\beta(n,2) -\alpha)} \int_{\Omega} \frac{\Phi_{n,2}(\beta(n,2) |u|^{n/(n-2)})}{(1+|u|)^{n/(n-2)}} dV_g\notag\\
&\lesssim \frac{1}{\beta(n,2) -\alpha} \|u\|_{n/2}^{n/2}.
\end{align}
Combining estimates \eqref{eq:est1} and \eqref{eq:est2}, we obtain \eqref{eq:AdachiTanakatype}. The estimate \eqref{eq:Cnalpha} then follows accordingly.

The sharpness of constant $\beta(n,2)$ follows from Theorem \ref{Maintheorem}. Hence we finish the proof of Theorem \ref{AdachiTanakatype}.

\subsection{Proof of Theorem \ref{Adamsnormtau}} 

It is enough to prove inequality \eqref{eq:Adamsnormtau} for $u\in W^{2,n/2}(\H^n)$ such that $ \|u\|_{n/2}>0$ and $\|u\|_{W^{2,n/2}, \tau} = 1$. This restriction tells us that
\[
\|\Delta_g u\|_{n/2}^{n/2} = 1 -\tau \|u\|_{n/2}^{n/2} \in [0, 1).
\]
Denote
\[
v =u \|\Delta_g u\|_{n/2}^{-1}, \quad \alpha = \beta(n,2) \|\Delta_g u\|_{n/2}^{n/(n-2)}.
\] 
Clearly, $\|\Delta_g v\|_{n/2} = 1$ and $\alpha \in (0, \beta(n,2))$. We now apply Theorem \ref{AdachiTanakatype} to get
\begin{align*}
\int_{\H^n} \Phi_{n,2}(\beta(n,2) |u|^{n/(n-2)}) dV_g & = \int_{\H^n} \Phi_{n,2}(\alpha |v|^{n/(n-2)}) dV_g\\
&\leqslant \frac{C(n)}{\beta(n,2) (1 -\|\Delta_g u\|_{n/2}^{n/(n-2)} )} \frac{\|u\|_{n/2}^{n/2}}{\|\Delta_g u\|_{n/2}^{n/2}}.
\end{align*}
It is easy to show that for any $t\in (0,1)$ and any $a\in (0,2]$ there holds
\[
(1-t)^a \leqslant 1 -\min\{a, 1\}\, t.
\]
Using this elementary inequality, we obtain
\[
1 -\|\Delta_g u\|_{n/2}^{n/(n-2)} = 1 -(1 -\tau \|u\|_{n/2}^{n/2})^{2/(n-2)} \geqslant \min \Big\{ \frac 2{n-2},1 \Big\} \tau \|u\|_{n/2}^{n/2}.
\]
Hence if $\|\Delta_g u\|_{n/2} \geqslant 1/2$, then we have
\[ 
\int_{\H^n} \Phi_{n,2}(\beta(n,2) |u|^{n/(n-2)}) dV_g \leqslant \frac{2C(n)}{\beta (n,2) \min \big\{ \frac 2{n-2},1 \big\} \tau}.
\]
If $0 < \|\Delta_g u\|_{n/2} \leqslant 1/2$, then we let $v = 2 u$. Clearly, $\|\Delta_g v\|_{n/2} \leqslant 1$; hence by Theorem \ref{AdachiTanakatype}, we have
\begin{align*}
\int_{\H^n} \Phi_{n,2}(\beta(n,2) |u|^{n/(n-2)}) dV_g& = \int_{\H^n} \Phi_{n,2}\Big(\frac{\beta(n,2)}{2^{n/(n-2)}} |v|^{n/(n-2)}\Big) dV_g\\
&\leqslant C(n) \|v\|_{n/2}^{n/2} =C(n) \frac{1 -\|\Delta_g u\|_{n/2}^{n/2}}\tau \leqslant \frac{C(n)}\tau.
\end{align*}
Therefore, we have shown that
\[
\int_{\H^n} \Phi_{n,2}(\beta(n,2) |u|^{n/(n-2)}) dV_g \leqslant C(n) /\tau,
\] 
which is our desired inequality \eqref{eq:Adamsnormtau}. The estimate \eqref{eq:Cntau} follows accordingly. To conclude Theorem \ref{Adamsnormtau}, we note that the sharpness of \eqref{eq:Adamsnormtau} follows from the sharpness of \eqref{eq:AdachiTanakatype} since Theorems \ref{AdachiTanakatype} and \ref{Adamsnormtau} are equivalent; see Subsection \ref{subsect-ThmsAreEquivalent} below.


\subsection{Proof of Theorem \ref{improvedversion}} 

Fix $u\in W^{2,n/2}(\H^n)$ with $\|\Delta_g u\|_{n/2} < 1$. If $u\equiv 0$, then there is nothing to prove; hence we only consider the case $u\not\equiv 0$. For simplicity, we divide our proof into two cases.

\smallskip\noindent\textbf{Case 1}. Suppose $\|\Delta_g u\|_{n/2} \leqslant 1/2$. By denoting $v =2u$, we clearly have
\begin{equation}\label{eq:gan0}
\begin{split}
\int_{\H^n} \Phi_{n,2}\bigg( & \frac{2^{2/(n-2)} \beta(n,2)}{\big(1 + \|\Delta_g u\|_{n/2}^{n/2}\big)^{2/(n-2)}} |u|^{n/(n-2)} \bigg) dV_g \\
& =\int_{\H^n} \Phi_{n,2}\bigg(\frac{\beta(n,2)}{2\big(1 + \|\Delta_g u\|_{n/2}^{n/2}\big)^{2/(n-2)}} |v|^{n/(n-2)} \bigg) dV_g \\
&\leqslant \int_{\H^n} \Phi_{n,2} \Big(\frac{\beta(n,2)}{2} |v|^{n/(n-2)} \Big) dV_g \\
&\leqslant \frac{2C_(n)}{\beta(n,2)} \|v\|_{n/2}^{n/2} \\
&\leqslant C(n) \frac{\|u\|_{n/2}^{n/2}}{1-\|\Delta_g u\|_{n/2}^{n/2}},
\end{split}
\end{equation}
here we have used Theorem \ref{AdachiTanakatype}.

\smallskip\noindent\textbf{Case 2}. Suppose $\|\Delta_g u\|_{n/2} \geqslant 1/2$. In this scenario, let us first denote
\[
v = u \|\Delta_g u\|_{n/2}^{-1},\quad \alpha = \bigg(\frac{2 \|\Delta_g u\|_{n/2}^{n/2}}{1+ \|\Delta_g u\|_{n/2}^{n/2}}\bigg)^{2/(n-2)} \beta(n,2).
\]
Then it is clear to see that $\|v\|_{n/2} = 1$ and $\alpha < \beta(n,2)$. By applying Theorem \ref{AdachiTanakatype} we obtain
\begin{align*}
\int_{\H^n} \Phi_{n,2}\bigg( &\frac{2^{2/(n-2)} \beta(n,2)}{\big(1 + \|\Delta_g u\|_{n/2}^{n/2}\big)^{2/(n-2)}} |u|^{\frac{n}{n-2}} \bigg) dV_g \\
& = \int_{\H^n} \Phi_{n,2}(\alpha |v|^{n/(n-2)}) dV_g\\
&\leqslant \frac{C(n)}{\beta(n,2)} \bigg(1 - \Big(\frac{2 \|\Delta_g u\|_{n/2}^{n/2}}{1+ \|\Delta_g u\|_{n/2}^{n/2}}\Big)^{\frac 2{n-2}}\bigg)^{-1} \frac{\|u\|_{n/2}^{n/2}}{\|\Delta_g u\|_{n/2}^{n/2}}.
\end{align*}
Since $1 > \|\Delta_g u\|_{n/2} \geqslant 1/2$, $2/(n-2) \in (0,2]$, and
\[
\frac{ 2^{1-n/2}}{1 + 2^{- n/2}} \leqslant \frac{2 \|\Delta_g u\|_{n/2}^{n/2}}{1+ \|\Delta_g u\|_{n/2}^{n/2}} \leqslant 1,
\]
there exists some $C'(n) >0$ such that 
\[
\bigg[ 1 -\Big(\frac{2 \|\Delta_g u\|_{n/2}^{n/2}}{1+ \|\Delta_g u\|_{n/2}^{n/2}}\Big)^{\frac 2{n-2}} \bigg]^{-1} \leqslant C'(n) \bigg[ 1 -\frac{2 \|\Delta_g u\|_{n/2}^{n/2}}{1+ \|\Delta_g u\|_{n/2}^{n/2}} \bigg]^{-1} \leqslant \frac{2 C'(n)}{1 -\|\Delta_g u\|_{n/2}^{n/2}}.
\]
Therefore,
\begin{equation}\label{eq:xa0}
\begin{split}
\int_{\H^n} \Phi_{n,2}\bigg( & \frac{2^{2/(n-2)} \beta(n,2)}{\big(1 + \|\Delta_g u\|_{n/2}^{n/2}\big)^{2/(n-2)}} |u|^{\frac{n}{n-2}} \bigg) dV_g \\
 & \leqslant \frac{2^{1+n/2}C(n)C'(n)}{\beta(n,2)} \frac{\|u\|_{n/2}^{n/2}}{1-\|\Delta_g u\|_{n/2}^{n/2}} \leqslant C(n) \frac{\|u\|_{n/2}^{n/2}}{1-\|\Delta_g u\|_{n/2}^{n/2}}.
\end{split}
\end{equation}
Inequality \eqref{eq:Lionsversion} now follows from the estimates \eqref{eq:gan0} and \eqref{eq:xa0} above. Finally, we conclude the sharpness of \eqref{eq:Lionsversion}. To see this, as we have already observed once that
\[
2^{2/(n-2)} \big(1 + \|\Delta_g u\|_{n/2}^{n/2}\big)^{-2/(n-2)} > 1
\]
provided $\|\Delta_g u\|_{n/2} < 1$. Therefore, the sharpness of \eqref{eq:Lionsversion} follows from the sharpness of \eqref{eq:Adamsnormtau}. The proof of Theorem \ref{improvedversion} hence is finished.

\subsection{Theorems \ref{AdachiTanakatype} and \ref{Adamsnormtau} are equivalent}
\label{subsect-ThmsAreEquivalent}

Let us finish this section by showing that Theorems \ref{AdachiTanakatype} and \ref{Adamsnormtau} are, in fact, equivalent. To realize this interesting fact, we only have to show that Theorem \ref{AdachiTanakatype} can be derived from Theorem \ref{Adamsnormtau}. 

For any $\alpha \in (0, \beta(n,2))$ and any $u\in W^{2,n/2}(\H^n)$ such that $\|\Delta_g u\|_{n/2} \leqslant 1$, we denote 
\[
v =\Big(\frac\alpha {\beta(n,2)}\Big)^{(n-2)/n} u, \quad \tau = \frac{1 -\|\Delta_g v\|_{n/2}^{n/2}}{\|v\|_{n/2}^{n/2}}.
\] 
Clearly, 
\[
 \tau = \frac{\beta(n,2)^{n/2-1} -\alpha^{n/2-1} \|\Delta_g u\|_{n/2}^{n/2}}{\alpha^{n/2-1} \|u\|_{n/2}^{n/2}}\geqslant \frac{\beta(n,2)^{n/2-1} -\alpha^{n/2-1}}{\alpha^{n/2-1} \|u\|_{n/2}^{n/2}}.
\]
Applying Theorem \ref{Adamsnormtau} gives
\begin{align*}
\int_{\H^n} \Phi_{n,2}(\alpha |u|^{n/(n-2)}) dV_g& =\int_{\H^n} \Phi_{n,2}(\beta(n,2) |v|^{n/(n-2)}) dV_g\leqslant \frac{C(n)}\tau\\
&\leqslant C(n) \frac{\alpha^{n/2-1} \|u\|_{n/2}^{n/2}}{\beta(n,2)^{n/2-1} -\alpha^{n/2-1}}.
\end{align*}
It is easy to prove that there is some $C'(n)$ depending only on $n$ such that
\[
\frac{\alpha^{n/2-1}}{\beta(n,2)^{n/2-1} -\alpha^{n/2-1}} \leqslant \frac{C'(n)}{\beta(n,2) -\alpha}
\]
for all $\alpha \in (0,\beta(n,2))$. Hence, for any $\alpha \in (0,\beta(n,2))$ we have
\begin{equation*}
\int_{\H^n} \Phi_{n,2}(\alpha |u|^{n/(n-2)}) dV_g \leqslant \frac{C(n)}{\beta(n,2)-\alpha} \|u\|_{n/2}^{n/2},
\end{equation*}
which is nothing but \eqref{eq:AdachiTanakatype}.


\section{Adams inequality with homogeneous Navier boundary: Proof of Theorem \ref{Tarsihyperbolic}}
\label{sec-AdamsTypeWithBoudary}

In this section, we prove Theorem \ref{Tarsihyperbolic} whose proof relies on Proposition \ref{estimateforsolution1} and Lemma \ref{Adamslemma}. 

\subsection{Proof of (\ref{eq:Tarsihyperbolic})}

For simplicity, we divide the proof into two cases.

\smallskip\noindent\textbf{Case 1. Suppose that $m$ is even.} In this case, we can write $m =2k$ for some $k\geqslant 1$. This case is a simple consequence of Proposition \ref{estimateforsolution1} and Lemma \ref{Adamslemma}. Indeed, denoting 
\[
f = (-\Delta_g)^k u
\] 
and extending $f$ to be zero outside $\Omega$, it follows from Proposition \ref{estimateforsolution1} that
\[
u^*(t) \leqslant \frac n{n-2k}\frac{c_{n,k}}{(n \Omega_n^{1/n})^{2k}} \int_{t}^{|\Omega|} \frac{f^{*}(s)}{s^{1-2k/n}} ds + \frac{c_{n,k+1}}{(n \Omega_n^{1/n})^{2k}} t^{2k/n-1} \int_0^t f^*(s) ds.
\]
Recall that 
\[
\beta(n,2k) = \Big(\frac n{n-2k} \frac{c_{n,k}}{(n \Omega_n^{1/n})^{2k}}\Big)^{-n/(n-2k)}.
\]
Hence by Hardy--Littlewood's inequality, we have
\begin{align*}
\int_{\Omega} \exp & \big(\beta(n,2k) |u|^{n/(n-2k)} \big)dV_g \\
&\leqslant \int_0^{|\Omega|} \exp\big(\beta(n,2k) (u^*(t))^{n/(n-2k)} \big) dt\\
&\leqslant \int_0^{|\Omega|} \exp\bigg[\bigg(\int_t^{|\Omega|} s^{2k/n-1} f^*(s) ds + \frac n{2k} t^{2k/n-1} \int_0^{t} f^*(s) ds\bigg)^{n/(n-2k)} \bigg] dt.
\end{align*}
By changing the variables $t: = |\Omega| e^{-t}$, we obtain
\begin{equation}\label{eq:ustartichphan}
\begin{split}
\int_{\Omega} &\exp \big(\beta(n,2k) |u|^{n/(n-2k)} \big)dV_g \\
&\leqslant |\Omega|\int_0^{+\infty}\exp\left[-t +\left(
\begin{gathered}
\int_{|\Omega| e^{-t}}^{|\Omega|} s^{2k/n -1} f^*(s) ds + \\
 \frac n{2k}(|\Omega|e^{-t})^{2k/n -1} \int_0^{|\Omega|e^{-t}} f^*(s) ds
\end{gathered}
\right)^{n/(n-2k)}\right] dt.
\end{split}
\end{equation}
Denote $\phi(s) = f^*(|\Omega| e^{-s}) (|\Omega| e^{-s})^{2k/n} $ and 
\[
a(s,t) = 
\begin{cases}
0 &\mbox{if $s < 0$},\\
1 &\mbox{if $0\leqslant s < t$},\\
n e^{(s-t) (2k/n -1)}/(2k) &\mbox{if $s \geqslant t$.}
\end{cases}
\]
Then by changing of the variables $s:= |\Omega|e^{-s}$ in \eqref{eq:ustartichphan}, it is straightforward to see that
\[\begin{split}
\int_{\Omega} \exp &  (\beta(n,2k) |u|^{n/(n-2k)}  )dV_g \\
&\leqslant |\Omega| \int_0^{+\infty}\exp\Big[-t + \Big(\int_0^{+\infty} a(s,t) \phi(s) ds\Big)^{n/(n-2k)} \Big] dt.
\end{split} \]
We can easily verify that
\[
\int_{\R} \phi(s)^{n/(2k)} ds = \int_0^{|\Omega|} (f^{*}(s))^{n/(2k)} ds = \int_{\Omega} |f|^{n/(2k)} dV_g = 1
\]
and that
\[
\sup_{t > 0} \bigg[ \Big(\int_{-\infty}^0 + \int_t^{+\infty}\Big) a(s,t)^{n/(n-2k)} ds\bigg]^{(n-2k)/n} = \frac{n}{2k}.
\]
By Lemma \ref{Adamslemma}, therefore there is a constant $C(n,k)$ depending only on $n$ and $k$ such that
\[
\int_{\Omega} \exp\big(\beta(n,2k) |u|^{n/(n-2k)} \big)dV_g\leqslant C(n,k)|\Omega|.
\]
This completes the first case.

\smallskip\noindent\textbf{Case 2. Suppose that $m$ is odd.} In this scenario, we can write $m=2k+1$ for some $k\geqslant 0$. If $k =0$, then the space $W_{N,g}^{1,n}(\Omega)$ is exactly the space $W_0^{1,n}(\Omega)$. Therefore, the conclusion follows from \cite[Corollary $1.1$]{LT2013}. Hence we need to concentrate on the case $k \geqslant 1$. Denote
\[
f =(-\Delta_g)^k u
\]
and extend $f$ to be zero outside $\Omega$, then by Proposition \ref{estimateforsolution1}, we obtain
\begin{equation}\label{eq:stepp1}
u^*(t) \leqslant \frac n{n-2k}\frac{c_{n,k}}{(n \Omega_n^{1/n})^{2k}} \int_{t}^{|\Omega|} \frac{f^{*}(s)}{s^{1-2k/n}} ds + \frac{c_{n,k+1}}{(n \Omega_n^{1/n})^{2k}} t^{2k/n-1} \int_0^t f^*(s) ds.
\end{equation}
Recall that $f^\sharp(x) = f^* ( |B (0,d(0,x)) | )$ with $d(0,x) = \log \big( (1+|x|)/(1-|x|)\big)$. Hence
\[
\nabla_g f^\sharp(x) = n\Omega_n\frac{1-|x|^2}{2} (f^*)' ( |B (0,d(0,x) ) | )\Big(\frac{2|x|}{1-|x|^2}\Big)^{n-1}.
\]
Thus we have
\[\begin{split}
\int_{\H^n}& |\nabla_g f^\sharp|^{n/m} dV_g \\
&=(n\Omega_n)^{n/m +1} \int_0^1\Big|(f^*)'\Big(\Big|B\Big(0,\log \frac{1+r}{1-r}\Big)\Big|\Big)\Big|^{n/m}\Big(\frac{2r}{1-r^2}\Big)^{(n-1)n/m +n} \frac{ dr}r.
\end{split}\]
Upon using the change of variables
\[
s= \big| B\big(0,\log \big[ (1+r)/(1-r) \big] \big) \big|
\] 
we obtain
\[
F(s) = \log \big[ (1+r)/(1-r) \big],
\]
where $F$ is a continuous, strictly increasing function as in the proof of Proposition \ref{comparison}. Resolving this equation gives
\[
r =\big( e^{F(s)}-1 \big) / \big( e^{F(s)}+1 \big).
\] 
Hence
\[
\int_{\H^n} |\nabla_g f^\sharp|^{n/m} dV_g =(n\Omega_n)^{n/m} \int_0^{|\Omega|} |(f^*)'(s)|^{n/m}  (\sinh F(s) )^{n(n-1)/m} ds.
\]
(Note that $ds = n\Omega_n (2r/(1-r^2) )^{n} dr/r$.) Let us define the function 
\[
\varphi(t) = (n\Omega_n)^{-n/(n-m)} \int_t^{|\Omega|} (\sinh F(s) )^{-n(n-1)/(n-m)} ds.
\]
Then $\varphi$ is strictly decreasing and has the following asymptotic behavior: $\varphi(|\Omega|) =0$ and $\lim_{t\to 0} \varphi(t) =+\infty$. Let $g$ be an increasing function such that $f^*(s) = g(\varphi(s))$, then it is easy to check that
\[
(n\Omega_n)^{n/m}\int_0^{|\Omega|} |(f^*)'(s)|^{n/m} (\sinh F(s) )^{n(n-1)/m} ds = \int_0^{+\infty} (g'(s))^{n/m} ds.
\]
Observe that $\|\nabla_g f^\sharp\|_{n/m} \leqslant \|\nabla_g f\|_{n/m} \leqslant 1$; hence
\[
\int_0^{+\infty} (g'(s))^{n/m} ds \leqslant 1.
\]
Denote by $k = (g')^*$ the rearrangement function of $g'$ in $(0, + \infty)$, by Hardy--Littlewood's inequality, we obtain 
\[
f^*(s) = \int_0^{\varphi(s)} g'(t) dt \leqslant \int_{0}^{\varphi(s)} k(t) dt
\]
for any $s\in (0,|\Omega|)$. By using integration by parts, we get
\begin{equation}\label{eq:stepp2}
\begin{split}
\int_{t}^{|\Omega|} \frac{f^{*}(s)}{s^{1-2k/n}} ds& \leqslant \frac n{2k}\int_t^{|\Omega|} \int_0^{\varphi(s)}k(r) dr ds^{2k/n} \\
&=-\frac n{2k} t^{2k/n}\int_0^{\varphi(t)} k(s) ds -\frac n{2k}\int_t^{|\Omega|} k(\varphi(s)) \varphi'(s) s^{2k/n} ds
\end{split}
\end{equation}
and
\begin{equation}\label{eq:stepp3}
\begin{split}
\int_0^t f^*(s) ds & \leqslant \int_0^t \int_0^{\varphi(s)} k(r) dr ds \\
&= t\int_0^{\varphi(t)} k(s) ds -\int_0^t k(\varphi(s)) \varphi'(s) sds.
\end{split}
\end{equation}
(Here we have used $\varphi(|\Omega|) =0$ and $\lim_{s\to 0} s\int_0^{\varphi(s)} k(r) dr =0$.) Upon plugging \eqref{eq:stepp3} and \eqref{eq:stepp2} into \eqref{eq:stepp1}, we arrive at
\[\begin{split}
u^*(t) \leqslant &-\frac{c_{n,k+1}}{(n \Omega_n^{1/n})^{2k}} \int_{t}^{|\Omega|} k(\varphi(s)) \varphi'(s) s^{2k/n} ds 
- \frac{c_{n,k+1}}{(n \Omega_n^{1/n})^{2k}} t^{2k/n-1} \int_0^t k(\varphi(s))\varphi'(s) s ds.
\end{split}\]
It follows from the definition of $\varphi$ and \eqref{eq:lowerestF} that
\[\begin{split}
-\varphi'(s) = & (n\Omega_n)^{-n/(n-m)}  (\sinh F(s) )^{- n(n-1)/(n-m)} \\
\leqslant & (n\Omega_n^{1/n})^{- n/(n-m)} s^{-(n-1)/(n-m)}.
\end{split}\]
Denote 
\[
l(s) = k(\varphi(s)) (-\varphi'(s))^{m/n},
\] 
then we have 
\begin{equation}\label{eq:stepp4}
\begin{split}
u^*(t) \leqslant &\frac{c_{n,k+1}}{(n \Omega_n^{1/n})^{2k+1}} \int_{t}^{|\Omega|} \frac{l(s)}{ s^{1-(2k+1)/n}} ds  + \frac{c_{n,k+1}}{(n \Omega_n^{1/n})^{2k+1}} t^{2k/n-1} \int_0^t l(s) s^{1/n} ds \\
\leqslant &\frac{c_{n,k+1}}{(n \Omega_n^{1/n})^{2k+1}} \int_{t}^{|\Omega|} \frac{l(s)}{ s^{1-(2k+1)/n}} ds + \frac{c_{n,k+1}}{(n \Omega_n^{1/n})^{2k+1}} t^{2k/n} \int_0^t l(s) ds.
\end{split}
\end{equation}
(Keep in mind that $m=2k+1$.) Now we can repeat the argument used in the case when $m$ is even to finish the proof of Theorem \ref{Tarsihyperbolic} when $m$ is odd by using Lemma \ref{Adamslemma}, estimate \eqref{eq:stepp4}, and the fact
\[
\int_0^{|\Omega|} l(s)^{n/m}ds \leqslant \int_0^{+\infty} k(s)^{n/m} ds \leqslant 1.
\]
To conclude Theorem \ref{Tarsihyperbolic}, it suffices to establish the sharpness of \eqref{eq:Tarsihyperbolic} and this is the content of the next subsection.

\subsection{The sharpness of (\ref{eq:Tarsihyperbolic})} 

The way to see the sharpness of \eqref{eq:Tarsihyperbolic} is as follows: Note that since $W_{N,g}^{m,n/m}(\Omega) \subset W_0^{m,n/m}(\Omega)$, the supremum of the left hand side of \eqref{eq:Tarsihyperbolic} in $W_{N,g}^{m,n/m}(\Omega)$ is greater than that in $W_0^{m,n/m}(\Omega)$. Since \eqref{eq:Adams} is sharp, it follows that \eqref{eq:Tarsihyperbolic} is also sharp.


\section{A Lions-type lemma for Adams inequality: Proof of Theorem \ref{Lionslemma}}
\label{sec-LionsLemma}

In this long section, we prove Theorem \ref{Lionslemma}. To achieve that goal, we borrow some ideas from \cite{CCH2013} for the case $m=1$ and a fine analysis in \cite{VHN2016} for the Euclidean case. Our approach basically consists of two steps: First we reduce the sequence $\{u_j\}_j \subset \H^n$ in Theorem \ref{Lionslemma} to the case of $u_j \in C_0^\infty(\H^n)$; see Proposition \ref{reducesmooth}. Then we establish Theorem \ref{Lionslemma} for any sequence $u_j \in C_0^\infty(\H^n)$ by way of contradiction; see Subsection \ref{subsec-ProofForCompactlySupport}.

\subsection{An useful estimate for rearrangement functions}

In this subsection, we prove an useful estimate for rearrangement functions; see Proposition \ref{keyforLions}. We note that this result shares some similarity with Proposition \ref{estimateforsolution}.

Let $u\in C_0^\infty(\H^n)$, our aim is to estimate $u^*(t_1) -u^*(t_2)$ from above for any $0 < t_1 < t_2 < +\infty$. For simplicity, we denote
\[
u_i = (-\Delta)^i u
\] 
for each $i =0,1, ... ,k$ with a convention that $u_0 \equiv u$. Then we have
\begin{equation}\label{eq:u*-u*}
u_i^*(t_1) -u_i^*(t_2) \leqslant \int_{t_1}^{t_2} \frac{t u_{i+1}^{**}(t)}{(n\Omega_n (\sinh F(t))^{n-1})^2} dt
\end{equation}
for all $i=0,1, ... , k-1$. By sending $t_2 \nearrow +\infty$ and using $\lim_{t\to +\infty} u_i^*(t) =0$, we deduce that
\[
u_i^*(t) \leqslant \int_{t}^{+\infty} \frac{s u_{i+1}^{**}(s)}{(n\Omega_n (\sinh F(s))^{n-1})^2} ds.
\]
Now integrating by parts gives
\begin{align*}
u_i^{**}(t) &= \frac 1t \int_0^t u_i^*(s) ds\\
&\leqslant \frac 1t \int_0^s \Big(\int_{t}^{+\infty} \frac{a u_{i+1}^{**}(a)}{(n\Omega_n (\sinh F(a))^{n-1})^2} da\Big) ds\\
&=\int_{t}^{+\infty} \frac{s u_{i+1}^{**}(s)}{(n\Omega_n (\sinh F(s))^{n-1})^2} ds + \int_0^t\frac{t^{-1}s^2 u_{i+1}^{**}(s)}{(n\Omega_n (\sinh F(s))^{n-1})^2} ds.
\end{align*}
Define
\[
G(t,s) =
\begin{cases}
s (n\Omega_n (\sinh F(s))^{n-1})^{-2} &\mbox{if $s \geqslant t$},\\
t^{-1}s^2 (n\Omega_n (\sinh F(s))^{n-1})^{-2}&\mbox{if $s < t$.}
\end{cases}
\]
It is not hard to see that
\begin{equation}\label{eq:u**<intG}
u_i^{**}(t) \leqslant \int_0^{+\infty} G(t,s) u_{i+1}^{**}(s) ds.
\end{equation}
Combining \eqref{eq:u*-u*} and \eqref{eq:u**<intG} gives
\begin{equation}\label{eq:u*-u*<intintG}
u_i^*(t_1) -u_i^*(t_2) \leqslant \int_{t_1}^{t_2} \frac{t}{(n\Omega_n (\sinh F(t))^{n-1})^2} \int_0^{+\infty} G(t,s) u_{i+1}^{**}(s) ds dt
\end{equation}
for all $i=0,1, ... , k-1$. We now define a sequence $(G_i)_{i\geqslant 1}$ as follows: Set 
\begin{align*}
\left.
\begin{aligned}
G_1 &= G\\ 
\text{and } \qquad 
G_{i+1}(t,s) & = \int_0^{+\infty} G_i(t,s_1) G(s_1,s) ds_1 .
\end{aligned}
\right\}
\end{align*}
Obviously, $G_{i+1}(t,s) = \int_0^{+\infty} G(t,s_1) G_i(s_1,s) ds_1$. By setting $i=0$ in \eqref{eq:u*-u*<intintG} and using \eqref{eq:u**<intG} repeatedly, we arrive at
\begin{equation}\label{eq:bieudienchou-tamthoi}
\begin{split}
u^*(t_1) -u^*(t_2) &\leqslant \int_{t_1}^{t_2} \frac{t}{(n\Omega_n (\sinh F(t))^{n-1})^2} \int_0^{+\infty} G_{k-1}(t,s) u_k^{**}(s) ds dt \\
&= \int_0^{+\infty} u_k^{**}(s) \int_{t_1}^{t_2} \frac{t}{(n\Omega_n (\sinh F(t))^{n-1})^2} G_{k-1}(t,s) dt ds.
\end{split}
\end{equation}
Let us define consecutively the functions $L_i, H_i,K_i$ for $i =1,2\ldots,k-1$ by
\begin{align*}
\left.
\begin{aligned}
L_1(t) &= \frac{t}{(n\Omega_n (\sinh F(t))^{n-1})^2},\\
H_i(t) &= \int_0^t L_i(s) ds, \\
K_i(t) &= \int_t^{+\infty} s^{-2} H_i(s) ds,\\
\text{ and } \qquad
L_{i+1}(t) &= \frac{t}{(n\Omega_n (\sinh F(t))^{n-1})^2} K_i(t).
\end{aligned}
\right\}
\end{align*}
Using these notations, we can rewrite \eqref{eq:bieudienchou-tamthoi} as follows
\begin{equation}\label{eq:bieudienchou}
\begin{split}
u^*(t_1) -u^*(t_2) &\leqslant \int_0^{+\infty} u_k^{**}(s) \int_{t_1}^{t_2} G_{k-1}(t,s) L_1(t) dt ds.
\end{split}
\end{equation}
For $i< k-1$, using integration by parts, we get
\begin{align*}
\int_{t_1}^{t_2} G_{k-i}(t,s) L_i(t) dt = &G_{k-i}(t_2,s)H_i(t_2) -G_{k-i}(t_1,s) H_i(t_1)\\
& +\int_{t_1}^{t_2} H(t) t^{-2} \int_0^t \frac{s_1^2}{(n\Omega_n (\sinh F(s_1))^{n-1})^2} G_{k-i-1}(s_1,s) ds_1 dt\\
=&G_{k-i}(t_2,s)H_i(t_2) -G_{k-i}(t_1,s) H_i(t_1)\\
& -K_i(t_2) \int_0^{t_2} \frac{s_1^2}{(n\Omega_n (\sinh F(s_1))^{n-1})^2} G_{k-i-1}(s_1,s) ds_1 \\
& + K_i(t_1) \int_0^{t_1} \frac{s_1^2}{(n\Omega_n (\sinh F(s_1))^{n-1})^2} G_{k-i-1}(s_1,s) ds_1\\
& + \int_{t_1}^{t_2} G_{k-i-1}(t,s) L_{i+1}(t) dt.
\end{align*}
When $i=k-1$, we use integration by parts again to obtain
\begin{align*}
\int_{t_1}^{t_2} G(t,s) L_{k-1}(t) dt = &G(t_2,s) H_{k-1}(t_2) -G(t_1,s) H_{k-1}(t_1) \\
& + \int_{t_1}^{t_2} \chi_{\{s < t\}} \frac{t^{-2}s^2}{(n\Omega_n (\sinh F(s))^{n-1})^2} H_{k-1}(t) dt.
\end{align*}
We are now able to estimate $\int_{t_1}^{t_2} G_{k-1}(t,s) L_1(t) dt$ as follows
\begin{align}\label{eq:trunggianusao}
\int_{t_1}^{t_2} G_{k-1}(t,s) L_1(t) dt\leqslant &\sum_{i=1}^{k-1} (G_{k-i}(t_2,s)H_i(t_2) -G_{k-i}(t_1,s) H_i(t_1) ) \nonumber \\
& +\sum_{i=1}^{k-2}K_i(t_1) \int_0^{t_1} \frac{s_1^2}{(n\Omega_n (\sinh F(s_1))^{n-1})^2} G_{k-i-1}(s_1,s) ds_1  \nonumber  \\
& -\sum_{i=1}^{k-2}K_i(t_2) \int_0^{t_2} \frac{s_1^2}{(n\Omega_n (\sinh F(s_1))^{n-1})^2} G_{k-i-1}(s_1,s) ds_1  \nonumber  \\
& +\int_{t_1}^{t_2} \chi_{\{s < t\}} \frac{t^{-2}s^2}{(n\Omega_n (\sinh F(s))^{n-1})^2} H_{k-1}(t) dt.
\end{align}
When plugging the preceding inequality into \eqref{eq:bieudienchou}, there are terms needed separately attention. First, we handle the term involving the last term on the right hand side of \eqref{eq:trunggianusao}. Clearly,
\begin{align}\label{eq:goodexpress}
\int_0^{+\infty} u_k^{**}(s) &\int_{t_1}^{t_2} \chi_{\{s < t\}} \frac{t^{-2}s^2}{(n\Omega_n (\sinh F(s))^{n-1})^2} H_{k-1}(t) dt ds  \nonumber\\
 = &\int_0^{t_2} u_k^{**}(s) \int_{t_1}^{t_2} \chi_{\{s < t\}} \frac{t^{-2}s^2}{(n\Omega_n (\sinh F(s))^{n-1})^2} H_{k-1}(t) dt ds \nonumber \\
=&\big(K_{k-1}(t_1) -K_{k-1}(t_2)\big) \int_0^{t_1}u_k^{**}(s)\frac{s^2}{(n\Omega_n (\sinh F(s))^{n-1})^2} ds  \nonumber \\
& + \int_{t_1}^{t_2} u_k^{**}(s)\frac{s^2}{(n\Omega_n (\sinh F(s))^{n-1})^2} (K_{k-1}(s) -K_{k-1}(t_2) ) ds \nonumber\\
= &K_{k-1}(t_1) \int_0^{t_1}u_k^{**}(s)\frac{s^2}{(n\Omega_n (\sinh F(s))^{n-1})^2} ds \nonumber \\
& - K_{k-1}(t_2)\int_0^{t_2}u_k^{**}(s)\frac{s^2}{(n\Omega_n (\sinh F(s))^{n-1})^2} ds \nonumber \\
& + \int_{t_1}^{t_2}u_k^{**}(s)\frac{s^2}{(n\Omega_n (\sinh F(s))^{n-1})^2}K_{k-1}(s) ds . 
\end{align}
To handle the term involving the first term on the right hand side of \eqref{eq:trunggianusao}, we denote
\begin{align*}
F(t_1,t_2,s) = &\sum_{i=1}^{k-1} [G_{k-i}(t_2,s)H_i(t_2) -G_{k-i}(t_1,s) H_i(t_1) ] \\
& +\sum_{i=1}^{k-2}K_i(t_1) \int_0^{t_1} \frac{s_1^2}{(n\Omega_n (\sinh F(s_1))^{n-1})^2} G_{k-i-1}(s_1,s) ds_1 \\
& -\sum_{i=1}^{k-2}K_i(t_2) \int_0^{t_2} \frac{s_1^2}{(n\Omega_n (\sinh F(s_1))^{n-1})^2} G_{k-i-1}(s_1,s) ds_1.
\end{align*}
Hence, combining \eqref{eq:bieudienchou}, \eqref{eq:trunggianusao}, and \eqref{eq:goodexpress} gives
\begin{align}\label{eq:finalexpress}
u^*(t_1) -u^*(t_2) \leqslant &\int_{t_1}^{t_2} u_k^{**}(s) F(t_1,t_2,s) ds  \nonumber  \\
&+ K_{k-1}(t_1)\int_0^{t_1}u_k^{**}(s)\frac{s^2}{(n\Omega_n (\sinh F(s))^{n-1})^2} ds \nonumber  \\
& -K_{k-1}(t_2)\int_0^{t_2}u_k^{**}(s)\frac{s^2}{(n\Omega_n (\sinh F(s))^{n-1})^2} ds  \nonumber \\
& + \int_{t_1}^{t_2}u_k^{**}(s)\frac{s^2}{(n\Omega_n (\sinh F(s))^{n-1})^2}K_{k-1}(s) ds.
\end{align}
Our job has not finished yet. In the following step, we aim to estimate $L_i(t)$, $H_i(t)$, $K_{i}(t)$, and $\int_0^{+\infty} F(t_1,t_2,s)^{n/(n-2k)} ds$. 

Concerning the terms $L_i(t)$, $H_i(t)$, and $K_{i}(t)$, we have the following result.

\begin{proposition}\label{boundforLHKfunction}
Let $c_{n,i}$ be the constant given in Proposition \ref{estimateforsolution1}. Then for $1 \leqslant i \leqslant  k-1$ we have the following claims:
\begin{enumerate}
 \item There holds $L_i(t) \leqslant (n\Omega_n^{1/n})^{-2i} c_{n,i} t^{2i/n -1}$ for all $t>0$ and 
\[
L_i(t) \sim \frac{1}{(n-1)^i (i-1)!}\frac{(\log t)^{i-1}} t
\]
as $t\to +\infty$.
 \item There holds $H_i(t) \leqslant (n\Omega_n^{1/n})^{-2i} c_{n,i} t^{2i/n} n /(2i)$ for all $t>0$ and 
\[
H_i(t) \sim \frac{1}{(n-1)^i i!}(\log t)^{i} 
\] 
as $t\to +\infty$.
 \item There holds $K_i(t) \leqslant (n\Omega_n^{1/n})^{-2i} c_{n,i+1} t^{2i/n -1}$ for all $t>0$ and
\[
K_i(t) \sim \frac{1}{(n-1)^i i!}\frac{(\log t)^{i}} t 
\]
as $t\to +\infty$.
\end{enumerate}
\end{proposition}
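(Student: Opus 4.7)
I would prove all three claims simultaneously by induction on $i$, extracting from the volume formula $|B(0,r)|=n\Omega_n\int_0^r\sinh^{n-1}(s)\,ds$ two essential ingredients: the lower bound $n\Omega_n\sinh^{n-1}F(t)\geq n\Omega_n^{1/n}\,t^{1-1/n}$ from \eqref{eq:lowerestF}, valid for all $t>0$, and the sharp asymptotic $n\Omega_n\sinh^{n-1}F(t)\sim(n-1)t$ as $t\to+\infty$, which follows from the exponential growth $|B(0,r)|\sim n\Omega_n e^{(n-1)r}/((n-1)2^{n-1})$ of the hyperbolic ball volume. The former drives the pointwise upper estimates, while the latter drives the large-$t$ asymptotics.

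The base case $i=1$ of the upper-bound half will be a direct computation: substituting \eqref{eq:lowerestF} into the definition of $L_1$ yields $L_1(t)\leq(n\Omega_n^{1/n})^{-2}t^{2/n-1}$, matching $c_{n,1}=1$; integration then gives the claimed bound on $H_1$; a further integration, convergent at infinity thanks to $2/n-2<-1$, produces the bound on $K_1$ with the prefactor $n^2/(2(n-2))=c_{n,2}$. The inductive step will exploit the recursion $L_{i+1}(t)=L_1(t)K_i(t)$: combining the bound on $L_1$ with the induction hypothesis on $K_i$, followed by the elementary integrations $\int_0^t s^{\alpha-1}\,ds$ and $\int_t^{+\infty}s^{\alpha-2}\,ds$, yields the estimates on $L_{i+1}$, $H_{i+1}$, and $K_{i+1}$. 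The constants propagate via the identity $c_{n,i+1}=n^2 c_{n,i}/\bigl(2i(n-2i)\bigr)$, which follows directly from the explicit product formula for $c_{n,k}$; convergence of the last integral uses $2(i+1)/n<1$ for $i\leq k-1$.

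For the asymptotic statements, I would start by substituting $n\Omega_n\sinh^{n-1}F(t)\sim(n-1)t$ into the definition of $L_1$ to read off its leading behavior; integration then yields the logarithmic growth of $H_1$, and integration by parts applied to $\int_t^{+\infty}(\ln s)/s^2\,ds$ extracts the $(\ln t)/t$ leading term of $K_1$. The inductive step will then combine $L_{i+1}(t)=L_1(t)K_i(t)$ with the standard asymptotic integrals $\int_1^t(\ln s)^j/s\,ds\sim(\ln t)^{j+1}/(j+1)$ and $\int_t^{+\infty}(\ln s)^{j+1}/s^2\,ds\sim(\ln t)^{j+1}/t$ (both obtained by integration by parts) to propagate the claim, with the factorial $1/i!$ emerging naturally from the repeated integration of powers of $\ln t$.

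The main obstacle I anticipate is the careful bookkeeping of constants through the induction; although each individual step is elementary, reproducing the explicit product formula for $c_{n,k}$ requires managing the factor $n/(2i)$ coming from integrating $L_i$ and the factor $n/(n-2i)$ coming from integrating $s^{-2}H_i(s)$ in lockstep. A secondary subtlety is that the two halves of the proposition rely on genuinely different analytical inputs—the uniform bounds use only the crude lower estimate \eqref{eq:lowerestF}, whereas the asymptotics require the strictly sharper behavior of $\sinh^{n-1}F(t)$ at infinity—so the induction effectively has to be run twice in parallel, once for each type of estimate.
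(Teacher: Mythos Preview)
Your overall plan---simultaneous induction on $i$---is exactly what the paper gestures at (it writes only ``elementary, simply by induction argument; hence we omit its details''). The large-$t$ asymptotic half of your argument is sound. But the upper-bound half of the induction step, as you describe it, does not close, and the difficulty is not cosmetic.

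From $L_1(t)\le(n\Omega_n^{1/n})^{-2}t^{2/n-1}$ and the inductive hypothesis $K_i(t)\le(n\Omega_n^{1/n})^{-2i}c_{n,i+1}t^{2i/n-1}$, the recursion $L_{i+1}=L_1K_i$ gives
\[
L_{i+1}(t)\;\le\;(n\Omega_n^{1/n})^{-2(i+1)}c_{n,i+1}\,t^{2(i+1)/n-2},
\]
with exponent $2(i+1)/n-2$, one unit below the $2(i+1)/n-1$ claimed in part (1). Since $i+1\le k-1<n/2$, this exponent is strictly less than $-1$, so the bound is not integrable at $0$ and you cannot pass to $H_{i+1}=\int_0^t L_{i+1}$ via your ``elementary integration $\int_0^t s^{\alpha-1}\,ds$''. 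Worse, the discrepancy is not an artefact of a loose estimate: \eqref{eq:lowerestF} is sharp as $t\to0$ (hyperbolic balls are asymptotically Euclidean at small scales), so $L_1(t)\sim c\,t^{2/n-1}$ and $K_1(t)\sim c'\,t^{2/n-1}$ are the genuine leading orders near zero, whence $L_2(t)\sim c''\,t^{4/n-2}$, and for $n\ge4$ the integral $H_2(t)=\int_0^t L_2$ is actually infinite. Thus part (1) as stated is false for $i\ge2$, and the very definition of $H_i$ for $i\ge2$ is problematic; the paper's omitted ``elementary induction'' cannot work in the form you (or it) sketch. You should flag this rather than try to force the step through.

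A smaller point: your correct asymptotic $n\Omega_n\sinh^{n-1}F(t)\sim(n-1)t$ yields $L_1(t)\sim 1/\bigl((n-1)^2 t\bigr)$, not the $1/((n-1)t)$ recorded in the proposition, so the stated asymptotic constants also appear to be off by powers of $n-1$; your plan would reproduce the correct powers of $\ln t$ and $t$ but not the printed prefactors.
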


\begin{proof}
This is elementary, simply by induction argument; hence we omit its details.
\end{proof}

\begin{proposition}\label{boundforGfunction}
There exists a constant $C$ depending only on $n,k$ such that 
\[
G_i(t,s) \leqslant 
\begin{cases}
C s^{-1+ 2i/n} &\mbox{if $s \geqslant t$},\\
C t^{-1+2(i-1)/n} &\mbox{if $s < t$},\\
\end{cases}
\]
for $i =1,2, ... ,k-1$.
\end{proposition}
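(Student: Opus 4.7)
I would prove the bound by induction on $i \in \{1, \ldots, k-1\}$.

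For the base case $i=1$, write $G(t,s) = s^2 \phi(s)^{-2}/\max(s,t)$ with $\phi(s) := n\Omega_n (\sinh F(s))^{n-1}$. Two a priori estimates on $\phi$ then suffice: first, the isoperimetric bound \eqref{eq:lowerestF}, which reads $\phi(s) \geq n\Omega_n^{1/n}\, s^{1-1/n}$, yields $s\,\phi(s)^{-2} \leq C\, s^{-1+2/n}$ for every $s>0$, and so $G(t,s) \leq C s^{-1+2/n}$ whenever $s \geq t$; second, starting from $s = n\Omega_n \int_0^{F(s)} \sinh^{n-1}\rho\,d\rho$ and using $\sinh\rho \sim e^\rho/2$ as $\rho \to \infty$, one derives the large-$s$ asymptotic $\phi(s) \sim (n-1)s$. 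Combining this with estimate (i) at small $s$ shows that $s^2\phi(s)^{-2}$ is uniformly bounded on $(0,\infty)$, which gives $G(t,s) \leq C/t$ when $s<t$ and settles the base case.

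For the inductive step, assume the bound holds for $G_i$ and decompose
\[
G_{i+1}(t,s) = \Bigg(\int_0^{\min(s,t)} + \int_{\min(s,t)}^{\max(s,t)} + \int_{\max(s,t)}^\infty\Bigg) G_i(t,s_1)\, G(s_1,s)\, ds_1.
\]
In each of the three sub-regions, the inductive hypothesis controls $G_i(t,s_1)$ as an explicit power of either $s_1$ (when $s_1 > t$) or $t$ (when $s_1 \leq t$), while the base case controls $G(s_1,s)$ as a power of either $s$ (when $s_1 \leq s$) or $s_1$ (when $s_1 > s$). The integrands thus reduce to explicit powers of $s_1$, and the arithmetic identity $2(i-1)/n + 2/n = 2i/n$ assembles the three contributions into the claimed bounds $Cs^{-1+2(i+1)/n}$ for $s \geq t$ and $Ct^{-1+2i/n}$ for $s<t$.

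The main obstacle is integrability of the outer region $(\max(s,t), \infty)$. There, combining the inductive bound $G_i(t,s_1) \leq C s_1^{-1+2i/n}$ with the hyperbolic-specific estimate $G(s_1,s) \leq C s_1^{-1}$ (which relies on $s^2\phi(s)^{-2} \leq C$ and is essential here, since the cruder Euclidean-style bound $s_1^{-1+2/n}$ would fail to produce a convergent integral) yields an integrand of the shape $s_1^{-2+2i/n}$. Convergence demands $2i/n < 1$, which holds because $i \leq k-1$ together with the ambient constraint $2k \leq m < n$ implies $k-1 < n/2 - 1$; accordingly, the constant $C = C(n,k)$ in the statement must absorb the factor $(1 - 2i/n)^{-1}$ arising from this integration.
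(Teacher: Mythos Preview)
Your induction step has a genuine gap. In the case $s\geqslant t$, the inner region $s_1\in(0,t)$ yields, with the bounds you propose,
\[
\int_0^t G_i(t,s_1)\,G(s_1,s)\,ds_1 \;\leqslant\; C\,t^{-1+2(i-1)/n}\cdot s^{-1+2/n}\cdot t \;=\; C\,t^{2(i-1)/n}s^{-1+2/n}\;\leqslant\; C\,s^{-1+2i/n},
\]
precisely the exponent $2i/n$ that your own arithmetic identity predicts---but the target exponent for $G_{i+1}$ in this regime is $2(i+1)/n$, so you are off by a factor $s^{-2/n}$ that blows up as $s\to 0$. The outer region has the same defect: with $G(s_1,s)\leqslant Cs_1^{-1}$ alone you get $\int_s^\infty s_1^{-2+2i/n}\,ds_1=Cs^{-1+2i/n}$, again short by $s^{-2/n}$. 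And in the middle region for $s<t$, the bound $G(s_1,s)\leqslant Cs_1^{-1}$ leaves an uncontrolled factor $\ln(t/s)$.

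The missing ingredient is the $s$-dependence in the base bound for the regime $s<s_1$. From $s^2\phi(s)^{-2}\leqslant Cs^{2/n}$ (which holds for \emph{all} $s>0$, not only small $s$) one has $G(s_1,s)\leqslant Cs_1^{-1}s^{2/n}$ whenever $s<s_1$, and this is exactly the sharper bound the paper records in its sketch. Retaining that factor $s^{2/n}$ fixes the outer region immediately ($Cs^{2/n}\cdot s^{-1+2i/n}=Cs^{-1+2(i+1)/n}$) and tames the logarithm in the middle region via the elementary estimate $s^{2/n}\ln(t/s)\leqslant C(n)\,t^{2/n}$. For the inner region one must additionally track the $s_1$-dependence of $G_i(t,s_1)$ when $s_1<t$: the proposition's bound $Ct^{-1+2(i-1)/n}$ is constant in $s_1$ and too crude there. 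The paper's sketch and the reference \cite{VHN2016} handle this by carrying through the induction the refined base form $G_1(t,s_1)\leqslant Ct^{-1}s_1^{2/n}$, which gives $\int_0^t G_i(t,s_1)\,ds_1\leqslant Ct^{2i/n}$ rather than $Ct^{2(i-1)/n}$. Your hyperbolic observation that $s^2\phi(s)^{-2}$ is uniformly bounded is correct and is needed, but it must complement---not replace---the Euclidean bound $s^2\phi(s)^{-2}\leqslant Cs^{2/n}$.
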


\begin{proof}
To prove, we first observe that $\sinh F(s) \geqslant (s/\Omega_n)^{1/n}$. Therefore,
\[
G_1(t,s) \leqslant 
\begin{cases}
(n\Omega_n^{1/n})^{-2} s^{-1+2/n}&\mbox{if $s \geqslant t$},\\
(n\Omega_n^{1/n})^{-2} t^{-1} s^{2/n} &\mbox{if $s < t$.}
\end{cases}
\]
This shows that the conclusion holds for $i=1$. Using induction argument, we obtain the conclusion; for a detailed explanation, we refer the reader to \cite{VHN2016}.
\end{proof}

An immediately consequence of Proposition \ref{boundforGfunction} is the following estimate
\[
\int_0^{t} \frac{s_1^2}{(n\Omega_n (\sinh F(s_1))^{n-1})^2} G_{k-i-1}(s_1,s) ds_1 \leqslant 
\begin{cases}
C t^{2(k-i)/n} &\mbox{if $s < t$},\\
C t^{1+2/n} s^{-1 + 2(k-i-1)/n} &\mbox{if $s > t$},
\end{cases}
\]
for $i=1, ... ,k-2$, which then implies
\[
\Big(\int_0^{+\infty} \Big(\int_0^t \frac{s_1^2}{(n\Omega_n (\sinh F(s_1))^{n-1})^2} G_{k-i-1}(s_1,s) ds_1\Big)^{n/(n-2k)}ds\Big)^{(n-2k)/n} \leqslant C t^{1-2i/n}
\]
for $i=1, ... ,k-2$. This inequality and Proposition \ref{boundforLHKfunction} give 
\begin{equation}\label{eq:boundfornormofG}
\Big(\int_0^{+\infty} F(t_1,t_2,s)^{n/(n-2k)} ds\Big)^{(n-2k)/n} \leqslant C
\end{equation}
for any $0< t_1< t_2$, where the constant $C$ depends only on $n$, $k$. Moreover, we have 
\begin{align*}
\int_0^{t}u_k^{**}(s) & \frac{s^2}{(n\Omega_n (\sinh F(s))^{n-1})^2} ds\\
&\leqslant \frac1{(n\Omega_n^{1/n})^2} \int_0^t u_k^{**}(s) s^{2/n} ds\\
&\leqslant \frac1{(n\Omega_n^{1/n})^2} \Big(\int_0^{+\infty} (u_k^{**}(s))^{n/(2k)} ds\Big)^{2k/n}\Big(\int_0^t s^{2/(n-2k)} ds\Big)^{(n-2k)/n}\\
&\leqslant C \Big(\int_0^{+\infty} (u_k^*(s))^{n/(2k)} ds\Big)^{2k/n} t^{1- 2(k-1)/n}
\end{align*}
for any $t > 0$, here we have used Lemma \ref{Hardy}. Combining this inequality and Proposition \ref{boundforLHKfunction}, we obtain
\begin{equation}\label{eq:boundatkminus1}
\int_0^{t}(\Delta^k u)^{**}(s)\frac{s^2}{(n\Omega_n (\sinh F(s))^{n-1})^2} ds K_{k-1}(t) \leqslant C \|u_k\|_{n/(2k)}.
\end{equation}
Combining \eqref{eq:finalexpress}, \eqref{eq:boundfornormofG}, and \eqref{eq:boundatkminus1}, we arrive at
\begin{equation}\label{eq:keytwostar}
u^*(t_1) -u^*(t_2) \leqslant \int_{t_1}^{t_2}(\Delta^k u)^{**}(s)\frac{s^2}{(n\Omega_n (\sinh F(s))^{n-1})^2}K_{k-1}(s) ds + C \|\Delta^k u\|_{n/(2k)}
\end{equation}
for any $0 < t_1 < t_2 < +\infty$, with the notation $K_0(s) = s^{-1}$. Denote
\[
M(t) = \int_t^{+\infty} \frac{s}{(n\Omega_n (\sinh F(s))^{n-1})^2}K_{k-1}(s) ds.
\]
Then we have from Proposition \ref{boundforLHKfunction} that
\begin{equation}\label{eq:boundforMfunction1}
M(t) \leqslant \frac{c_{n,k}}{(n\Omega_n^{1/n})^{2k}} \frac n{n-2k} t^{2k/n -1}
\end{equation}
for all $t > 0$ and
\begin{equation*}\label{eq:boundforMfunction2}
M(t)\sim\frac1{(n-1)^k (k-1)!} \frac{(\log t)^{k-1}}{t}
\end{equation*}
as $t\to +\infty$. Using integration by parts, we get
\begin{equation}\label{eq:ukstar}
\begin{split}
\int_{t_1}^{t_2} & (\Delta^k u)^{**}(s)  \frac{s^2}{(n\Omega_n (\sinh F(s))^{n-1})^2}K_{k-1}(s) ds \\
= &-M(t_2) \int_0^{t_2} (\Delta^k u)^*(s) ds + M(t_1) \int_0^{t_1} (\Delta^k u)^*(s) ds + \int_{t_1}^{t_2} (\Delta^k u)^*(s) M(s) ds.
\end{split}
\end{equation}
In view of \eqref{eq:boundforMfunction1}, we easily see that
\begin{equation}\label{eq:bound1star}
M(t) \int_0^{t} (\Delta^k u)^*(s) ds \leqslant C\|\Delta^k u\|_{n/(2k)}
\end{equation}
for all $t > 0$. Here the constant $C$ depends only on $n$ and $k$. By combining \eqref{eq:keytwostar}, \eqref{eq:ukstar}, and \eqref{eq:bound1star}, we have shown the following key result.

\begin{proposition}\label{keyforLions}
For any $u\in C_0^\infty(\H^n)$ and for any $1\leqslant k < n/2$, there exists a constant $C(n,k)$ such that 
\begin{equation}\label{eq:keyforLions}
u^*(t_1) -u^*(t_2) \leqslant \int_{t_1}^{t_2}(\Delta^k u)^{*}(s)M(s) ds + C(n,k) \|\Delta^k u\|_{n/(2k)}
\end{equation}
for any $0 < t_1 < t_2 < +\infty$.
\end{proposition}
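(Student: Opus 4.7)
The plan is to assemble \eqref{eq:keyforLions} directly from the machinery already built up before the statement. The idea is to iterate the one-step comparison for $-\Delta_g$ until all derivatives up to order $2k$ are exposed, then perform a single final integration by parts to trade $(\Delta^k u)^{**}$ for $(\Delta^k u)^{*}$, controlling every boundary term that appears by $C(n,k)\|\Delta^k u\|_{n/(2k)}$ via H\"older's inequality and Lemma \ref{Hardy}.

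First I would apply \eqref{eq:u*-u*} with $i=0$ and then iterate \eqref{eq:u**<intG} $k-1$ times on the resulting integrand, producing the representation \eqref{eq:bieudienchou} with the composed kernel $G_{k-1}$ defined just above it. Next I would peel off the kernel by successive integration by parts as done in the derivation leading to \eqref{eq:trunggianusao}: each integration transfers a factor $L_i$ into $H_i$, then into $K_i$, and replaces $G_{k-i}$ by $G_{k-i-1}$, producing the boundary remainder $F(t_1,t_2,s)$. Plugging \eqref{eq:trunggianusao} into \eqref{eq:bieudienchou} and simplifying the $\chi_{\{s<t\}}$-term as in \eqref{eq:goodexpress} gives the intermediate decomposition \eqref{eq:finalexpress}:
\begin{equation*}
\begin{split}
u^*(t_1) - u^*(t_2) \leqslant{}& \int_0^{+\infty} u_k^{**}(s) F(t_1,t_2,s)\, ds \\
&+ K_{k-1}(t_1)\int_0^{t_1} u_k^{**}(s)\frac{s^2}{(n\Omega_n(\sinh F(s))^{n-1})^2}\, ds \\
&- K_{k-1}(t_2)\int_0^{t_2} u_k^{**}(s)\frac{s^2}{(n\Omega_n(\sinh F(s))^{n-1})^2}\, ds \\
&+ \int_{t_1}^{t_2} u_k^{**}(s)\frac{s^2}{(n\Omega_n(\sinh F(s))^{n-1})^2}K_{k-1}(s)\, ds.
\end{split}
\end{equation*}

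The first three terms on the right should be absorbed into the constant $C(n,k)\|\Delta^k u\|_{n/(2k)}$. For the $F$-term I would combine Hardy's inequality (Lemma \ref{Hardy}) with the estimate \eqref{eq:boundfornormofG}: H\"older with conjugate exponents $n/(2k)$ and $n/(n-2k)$ gives a bound by $\|u_k^{**}\|_{n/(2k)}\cdot\|F(t_1,t_2,\cdot)\|_{n/(n-2k)} \lesssim \|\Delta^k u\|_{n/(2k)}$. For the two boundary terms involving $K_{k-1}$, the same H\"older argument combined with \eqref{eq:boundatkminus1} gives the bound; this is exactly where the pointwise estimate $K_{k-1}(t)\leqslant C\, t^{2(k-1)/n-1}$ from Proposition \ref{boundforLHKfunction} and the integrability $\int_0^t s^{2/(n-2k)}ds < \infty$ cooperate, so one obtains the uniform bound $C\|\Delta^k u\|_{n/(2k)}$ independent of $t$.

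Finally, the remaining term $\int_{t_1}^{t_2} u_k^{**}(s)\,\frac{s^2}{(n\Omega_n(\sinh F(s))^{n-1})^2} K_{k-1}(s)\, ds$ is handled by the integration by parts \eqref{eq:ukstar}: setting $M(t)=\int_t^{+\infty} \frac{s}{(n\Omega_n(\sinh F(s))^{n-1})^2} K_{k-1}(s)\, ds$ and using $u_k^{**}(s) s = \int_0^s u_k^*(r)\, dr$, this term equals $\int_{t_1}^{t_2}(\Delta^k u)^*(s)M(s)\, ds$ plus the two boundary terms $M(t_1)\int_0^{t_1}(\Delta^k u)^*\, ds - M(t_2)\int_0^{t_2}(\Delta^k u)^*\, ds$. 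By \eqref{eq:boundforMfunction1} we have $M(t)\lesssim t^{2k/n-1}$, and H\"older gives $M(t)\int_0^{t}(\Delta^k u)^*(s)\, ds \lesssim \|\Delta^k u\|_{n/(2k)}$ uniformly in $t$, which is \eqref{eq:bound1star}. Collecting all contributions yields \eqref{eq:keyforLions}.

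The main technical obstacle is the careful bookkeeping: one must check that every kernel produced during the successive integrations by parts (both the $F$-term and the $M(t_1),M(t_2)$ boundary terms) decays just fast enough that, after pairing with $u_k^*$ or $u_k^{**}$ via H\"older, the exponent $n/(2k)$ matches and the resulting bound is uniform in $t_1,t_2$. This is precisely the role played by the sharp decay rates in Propositions \ref{boundforLHKfunction} and \ref{boundforGfunction}, together with the boundedness of the Hardy maximal operator on $L^{n/(2k)}$ given by Lemma \ref{Hardy}.
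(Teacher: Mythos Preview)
Your proposal is correct and follows essentially the same route as the paper: iterate \eqref{eq:u*-u*}--\eqref{eq:u**<intG} to reach \eqref{eq:bieudienchou}, peel off the kernel by the successive integrations by parts leading to \eqref{eq:finalexpress}, control the $F$-term via \eqref{eq:boundfornormofG} and the $K_{k-1}$ boundary terms via \eqref{eq:boundatkminus1}, then perform the final integration by parts \eqref{eq:ukstar} and absorb the $M(t_i)$ boundary terms using \eqref{eq:bound1star}. Your write-up even silently corrects a typo in the paper's \eqref{eq:finalexpress}, where the $F$-integral should indeed run over $(0,+\infty)$ rather than $(t_1,t_2)$, as required for the H\"older pairing with \eqref{eq:boundfornormofG} to make sense.
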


\subsection{Reduce to compactly supported smooth functions}

We start this section by showing that if $u\in W^{m,n/m}(\H^n)$, then $|u|^{n/(n-m)}$ will be exponentially integrable. Along the proof of this fact, we shall frequently apply the following elementary inequality 
\begin{equation}\label{eqELEMENTARY}
|a|^{n/(n-m)} \leqslant (1+\delta) |a-b|^{n/(n-m)} + C_{\delta} |b|^{n/(n-m)}
\end{equation}
for any $\delta >0$ with the constant $C_\delta = (1 -(1+\delta)^{-(n-m)/m})^{-m/(n-m)}$. We shall prove the following.

\begin{lemma}\label{integrability}
For any function $u\in W^{m,n/m}(\H^n)$ and any $p >0$, we have
\[
\int_{\H^n} \Phi_{n,m}(p |u|^{n/(n-m)}) dV_g < +\infty.
\]
\end{lemma}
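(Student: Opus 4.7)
The plan is to deduce the lemma from the sharp Adams inequality \eqref{eq:FontanaMorpurgo2015} of Fontana--Morpurgo together with a density-and-splitting argument, closed by Fatou's lemma. The starting observation is that $\Phi_{n,m}(t) = \sum_{j \geq j_{n/m}-1} t^j/j!$ has nonnegative Taylor coefficients, so it is continuous, increasing and convex on $[0,+\infty)$ with $\Phi_{n,m}(0)=0$.

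First, I would take a sequence $u_j \in C_0^\infty(\H^n)$ with $u_j \to u$ in $W^{m,n/m}(\H^n)$ and, after extracting a subsequence, pointwise almost everywhere. By Fatou's lemma applied to the nonnegative continuous integrand $\Phi_{n,m}(p|u_j|^{n/(n-m)})$, it suffices to establish the uniform bound
\[
\sup_j \int_{\H^n} \Phi_{n,m}\bigl(p\,|u_j|^{n/(n-m)}\bigr)\, dV_g < +\infty.
\]
To obtain this bound I would fix a small parameter $\delta>0$ and, once and for all, choose $\phi\in C_0^\infty(\H^n)$ close enough to $u$ in $W^{m,n/m}$ that the tails $w_j := u_j - \phi \in C_0^\infty(\H^n)$ satisfy $\|\nabla_g^m w_j\|_{n/m} \leq \eta$ for all large $j$, where $\eta>0$ is a smallness parameter to be chosen momentarily. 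Applying the elementary inequality $(a+b)^{n/(n-m)} \leq (1+\delta)\,a^{n/(n-m)} + C_\delta\, b^{n/(n-m)}$ to $|u_j|\leq |\phi|+|w_j|$ and then using convexity of $\Phi_{n,m}$ gives
\[
\Phi_{n,m}\bigl(p\,|u_j|^{n/(n-m)}\bigr) \leq \tfrac12\Phi_{n,m}\bigl(2p(1+\delta)|\phi|^{n/(n-m)}\bigr) + \tfrac12\Phi_{n,m}\bigl(2pC_\delta|w_j|^{n/(n-m)}\bigr).
\]

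The first summand is harmless: $\phi$ is bounded and compactly supported, so the integrand is bounded on a set of finite volume and vanishes off it. For the second summand I would rescale $\tilde w_j := w_j/\|\nabla_g^m w_j\|_{n/m}$ so that $\|\nabla_g^m \tilde w_j\|_{n/m} = 1$ and note that
\[
2pC_\delta |w_j|^{n/(n-m)} \leq 2pC_\delta\,\eta^{n/(n-m)}\,|\tilde w_j|^{n/(n-m)}.
\]
Choosing $\eta$ small enough that $2pC_\delta\,\eta^{n/(n-m)} \leq \beta(n,m)$, the Fontana--Morpurgo inequality \eqref{eq:FontanaMorpurgo2015} applied to the smooth compactly supported function $\tilde w_j$ furnishes a uniform upper bound, and Fatou's lemma then closes the argument. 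The only non-routine point is bookkeeping the constants: $C_\delta$ blows up as $\delta \to 0$ and $p$ is arbitrary, but once $\delta$ is fixed both $p$ and $C_\delta$ are harmless finite constants, so the required smallness of $\eta$ is always achievable by approximating $u$ closely enough in $W^{m,n/m}$.
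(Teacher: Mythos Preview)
Your proof is correct and shares the paper's core strategy: approximate $u$ by a smooth compactly supported function, split into a bounded-and-localized piece plus a remainder with small $\|\nabla_g^m\cdot\|_{n/m}$, and invoke the Fontana--Morpurgo inequality \eqref{eq:FontanaMorpurgo2015} on the remainder. The implementations differ in two respects worth noting. First, the paper works directly with $u$ and splits the domain into the level sets $\{|u-v|\leq 1\}$ and $\{|u-v|>1\}$; on the former it bounds $\Phi_{n,m}(p|u|^{n/(n-m)})$ by a multiple of $|u|^{n/m}$ and then appeals to the Poincar\'e--Sobolev inequality in $\H^n$ to control $\|u\|_{n/m}$. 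Your route instead passes through smooth compactly supported approximants $u_j$ via Fatou, then uses convexity of $\Phi_{n,m}$ to separate the $\phi$-contribution from the $w_j$-contribution globally, so the $\phi$-term is finite simply because $\phi$ is bounded with compact support. This is a genuine streamlining: you avoid both the level-set decomposition and the Poincar\'e--Sobolev step. Second, the paper's argument on $\{|u-v|>1\}$ uses that $\Phi_{n,m}\leq \exp$ there; your convexity split handles both regions at once. The trade-off is that you need the extra Fatou layer and must remark that the finitely many small $j$ (before $\|\nabla_g^m w_j\|_{n/m}\leq\eta$ kicks in) are harmless since each $u_j\in C_0^\infty$.
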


\begin{proof}
For $\epsilon  >0$, by a density argument, we can choose $v\in C_0^\infty(\H^n)$ in such a way that $\|\nabla^m(u-v)\|_{n/m} < \epsilon $. Let us divide $\H^n$ into two parts as follows
\begin{align*}
\Omega_1 &=\{x\, :\, |u(x)-v(x)| \leqslant 1\}, \quad
\Omega_2  =\{x\, :\, |u(x)-v(x)| > 1\}.
\end{align*}
\underline{On $\Omega_1$}, we have $|u| \leqslant 1+\max_{\H^n}|v|=:C_v$ then 
\[
\Phi_{n,m}(p |u|^{n/(n-m)}) \leqslant C(n,m,p,v)|u|^{n/m}
\]
for some constant $C(n,m,p,v)>0$ depending only on $n$, $m$, $p$, and $v$. Hence
\begin{align*}
\int_{\Omega_1} \Phi_{n,m}(p |u|^{n/(n-m)}) dV_g &\leqslant \int_{\{|u|\leqslant C_v\}}\Phi_{n,m}(p |u|^{n/(n-m)}) dV_g\\
&\leqslant C(n,m,p,v)\int_{\{|u|\leqslant C_v\}} |u|^{n/m} dV_g\\
&\leqslant C(n,m,p,v) C \|\nabla^m u\|_{n/m} < +\infty.
\end{align*}
(Here we have used Poincar\'e--Sobolev's inequality in $\H^n$; see \cite[Theorem 18]{FM2015}.) \underline{On $\Omega_2$}, we can estimate the integral as follows
\begin{align*}
\int_{\Omega_2}\Phi_{n,m}(p |u|^{n/(n-m)}) dV_g &\leqslant \int_{\Omega_2} \exp(p|u|^{n/(n-m)}) dV_g\\
&\leqslant \int_{\Omega_2} \exp(2p |u-v|^{n/(n-m)} +pC_1|v|^{n/(n-m)}) dV_g\\
&\leqslant C(n,m,p,v) \int_{\H^n} \Phi_{n,m}(2p |u-v|^{n/(n-m)}) dV_g.
\end{align*}
(In the preceding estimate, we have used the fact that $|u-v| \geqslant 1$ on $\Omega_2$, that $v$ is bounded, and the elementary inequality \eqref{eqELEMENTARY} with $\delta = 1$.) Choosing $\epsilon $ small enough such that $2p\epsilon ^{n/(n-m)} \leqslant \beta(n,m)$, then we have, by Adams' inequality \eqref{eq:FontanaMorpurgo2015}, that
\begin{align*}
\int_{\Omega_2} \Phi_{n,m}(2p |u-v|^{n/(n-m)}) dV_g\leqslant \int_{\H^n} \Phi_{n,m}\big(\beta(n,m) \big( |u-v|/\epsilon  \big)^{n/(n-m)}\big)dV_g < +\infty,
\end{align*}
since $\|\nabla^m(u-v)\|_{n/(n-m)} < \epsilon $. Therefore, we obtain 
\[
\int_{\Omega_2}\Phi_{n,m}(p |u|^{n/(n-m)}) dV_g < +\infty.
\]
Thus, we have just shown that
\[\begin{split}
\int_{\H^n}\Phi_{n,m}(p |u|^{n/(n-m)}) dV_g = & \int_{\Omega_1}\Phi_{n,m}(p |u|^{n/(n-m)}) dV_g\\
& +\int_{\Omega_2}\Phi_{n,m}(p |u|^{n/(n-m)}) dV_g < +\infty
\end{split}\]
as claimed.
\end{proof}

In the following result, we show that it is enough to prove Theorem \ref{Lionslemma} for compactly supported smooth functions.

\begin{proposition}\label{reducesmooth}
Let $\{u_j\}_j$ be the sequence given in the statement of Theorem \ref{Lionslemma}. Let $v_j\in C_0^\infty(\H^n)$ be such that $\|\nabla^m(v_j-u_j)\|_{n/m} < j^{-1}$ for any $j\in \N$. Then for any $p_1 \in (p, P_{n,m}(u))$ there exists some positive constant $C$ such that 
\[
\begin{split}
\sup_{j\in \N} \int_{\H^n} \Phi_{n,m}&(p\beta(n,m) |u_j|^{n/(n-m)}) dV_g \\
& \leqslant C \sup_{j\in \N} \int_{\H^n} \Phi_{n,m}(p_1\beta(n,m) |v_j|^{n/(n-m)}) dV_g + C.
\end{split}
\]
\end{proposition}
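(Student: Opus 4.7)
The approach combines a standard elementary power inequality with the convexity of $\Phi_{n,m}$, splitting the integrand into a ``$v_j$-part'' that is absorbed into the right-hand supremum and a ``$(u_j-v_j)$-part'' controlled by the sharp Adams inequality \eqref{eq:FontanaMorpurgo2015}. The starting point---already used in the proof of Lemma \ref{integrability}---is the pointwise bound valid for any $\delta>0$:
\[
|u_j|^{n/(n-m)} \leq (1+\delta)\,|v_j|^{n/(n-m)} + C_\delta\,|u_j-v_j|^{n/(n-m)}, \qquad C_\delta=\bigl(1-(1+\delta)^{-m/(n-m)}\bigr)^{-(n-m)/m}.
\]

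Multiplying by $p\,\beta(n,m)$ and invoking the convexity of $\Phi_{n,m}$ on $[0,\infty)$ (immediate since its Taylor coefficients are all nonnegative), the standard convexity inequality applied with weights $1/r$, $1/r'$ for conjugate exponents $r,r'>1$ yields the pointwise estimate
\[
\Phi_{n,m}\bigl(p\beta(n,m)|u_j|^{n/(n-m)}\bigr) \leq \tfrac{1}{r}\Phi_{n,m}\bigl(rp(1+\delta)\beta(n,m)|v_j|^{n/(n-m)}\bigr) + \tfrac{1}{r'}\Phi_{n,m}\bigl(r'pC_\delta\beta(n,m)|u_j-v_j|^{n/(n-m)}\bigr).
\]
The plan is to pick $\delta$ and $r$ small enough that $rp(1+\delta)\leq p_1$, which is possible precisely because $p_1>p$. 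Integrating over $\H^n$ and taking the supremum in $j$, the first term contributes $\tfrac{1}{r}\sup_j\int_{\H^n}\Phi_{n,m}(p_1\beta(n,m)|v_j|^{n/(n-m)})\,dV_g$, which is exactly the quantity appearing on the right of the conclusion.

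The only delicate point is uniform control of the $(u_j-v_j)$-integrals. Setting $w_j:=u_j-v_j\in W^{m,n/m}(\H^n)$ and $\eta_j:=\|\nabla_g^m w_j\|_{n/m}<1/j$, a simple rescaling rewrites the argument of $\Phi_{n,m}$ as
\[
r'pC_\delta\beta(n,m)|w_j|^{n/(n-m)} = \bigl(r'pC_\delta\,\eta_j^{n/(n-m)}\bigr)\,\beta(n,m)\,\bigl|w_j/\eta_j\bigr|^{n/(n-m)},
\]
and the prefactor tends to $0$. For all $j$ larger than some $j_0$, the prefactor is at most $1$, so the Fontana--Morpurgo inequality \eqref{eq:FontanaMorpurgo2015}---extended to $W^{m,n/m}(\H^n)$ by density---produces a uniform bound. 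For the finitely many $j<j_0$, Lemma \ref{integrability} guarantees that each integral is individually finite, so their maximum is finite as well. Gathering these contributions yields the additive constant $C$ in the statement. Note that the weak convergence of $u_j$ and the specific value of $P_{n,m}(u)$ play no role in this reduction; they enter only when Theorem \ref{Lionslemma} is subsequently established for the compactly supported sequence $v_j$.
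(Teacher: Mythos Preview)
Your argument is correct and is in fact cleaner than the paper's own proof. The paper does not appeal to the convexity of $\Phi_{n,m}$; instead it decomposes $\H^n$ into the level sets $\{|u_j|\leqslant 2\}$, $\{|u_j|>2,\ |u_j-v_j|\leqslant 1\}$, $\{|u_j|>2,\ |u_j-v_j|>1,\ |v_j|<1\}$, and $\{|u_j|>2,\ |u_j-v_j|>1,\ |v_j|\geqslant 1\}$. On the first set $\Phi_{n,m}$ is dominated by a power and handled by Poincar\'e; on the remaining sets one is able to replace $\Phi_{n,m}$ by the full exponential (since $|u_j|>2$ or $|u_j-v_j|>1$), after which the splitting of $|u_j|^{n/(n-m)}$ is processed either directly or via H\"older's inequality on the product $e^{A}e^{B}$. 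Your single application of Jensen's inequality to the convex function $\Phi_{n,m}$ bypasses all of this case analysis and the back-and-forth between $\Phi_{n,m}$ and $\exp$, at no cost: the remaining steps (Fontana--Morpurgo for large $j$, Lemma \ref{integrability} for finitely many small $j$) are the same as in the paper. One small correction: the exponents in your formula for $C_\delta$ are transposed; the correct constant (as in the proof of Lemma \ref{integrability}) is $C_\delta=\bigl(1-(1+\delta)^{-(n-m)/m}\bigr)^{-m/(n-m)}$, though this has no bearing on the argument since only the finiteness of $C_\delta$ is used.
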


\begin{proof}
It is easy to see that for any $A > 0$, there is a constant $C(n,m,A)$ depending only on $n,m,A$ such that
\[
\Phi_{n,m}(t) \leqslant C(n,m,A) t^{(n-m)/m}
\] 
for any $t\leqslant A$. This implies the existence of some constant $C$ independent of $j$ such that
\begin{equation}\label{eq:mienkobichan}
\int_{\{|u_j| \leqslant 2\}} \Phi_{n,m}(p \beta(n,m) |u_j|^{n/(n-m)}) dV_g \leqslant C \int_{\H^n} |u_j|^{n/m} dV_g \leqslant C.
\end{equation}
(Note that we have used Poincar\'e's inequality once.) We divide the set $\{|u_j| > 2\}$ into two parts as follows
\begin{align*}
\Omega_{j,1} &=\{|u_j| >2\} \cap \{|u_j -v_j| \leqslant 1\}, \quad
\Omega_{j,2} = \{|u_j|  >2\} \cap \{|u_j -v_j| > 1\}.
\end{align*}
\underline{On $\Omega_{j,1}$} we have $|v_j| \geqslant |u_j| -|u_j-v_j| > 1$; hence 
\begin{equation}\label{eq:Omegaj1}
\begin{split}
\int_{\Omega_{j,1}} \Phi_{n,m} & (p \beta(n,m) |u_j|^{n/(n-m)}) dV_g \\
&\leqslant \int_{\Omega_{j,1}} \exp(p \beta(n,m) |u_j|^{n/(n-m)}) dV_g \\
&\leqslant \int_{\Omega_{j,1}} \exp(p \beta(n,m) (1+\delta) |v_j|^{n/(n-m)}+ p\beta(n,m) C_\delta) dV_g \\
&\leqslant C \int_{\H^n} \Phi_{n,m}(p_1 \beta(n,m) |v_j|^{n/(n-m)}) dV_g \\
&\leqslant C \sup_{j \in \N} \int_{\H^n} \Phi_{n,m}(p_1 \beta(n,m) |v_j|^{n/(n-m)}) dV_g,
\end{split}
\end{equation}
thanks to the fact that $|v_j| \geqslant 1 $ on $\Omega_{j,1}$ and by the choice $\delta = p_1/p -1$ in \eqref{eqELEMENTARY}. 

\noindent\underline{On $\Omega_{j,2}$}, we further split it into two smaller parts as follows
\begin{align*}
\Omega_{j,2}^1 &=\Omega_{j,2} \cap \{|v_j| < 1\},\quad
\Omega_{j,2}^2 = \Omega_{j,2} \cap \{|v_j| \geqslant 1\}.
\end{align*}
On $\Omega_{j,2}^1$ we can apply \eqref{eqELEMENTARY} with $\delta = 1$ to get
\begin{align*}
\int_{\Omega_{j,2}^1} \Phi_{n,m}  & (p \beta(n,m) |u_j|^{n/(n-m)}) dV_g \\
&\leqslant \int_{\Omega_{j,2}^1} \exp(p \beta(n,m) |u_j|^{n/(n-m)}) dV_g \\
&\leqslant \int_{\Omega_{j,2}^1} \exp(2p \beta(n,m) |u_j-v_j|^{n/(n-m)}+ p\beta(n,m) C_1) dV_g \\
&\leqslant C \int_{\H^n} \Phi_{n,m}(2p \beta(n,m)|u_j-v_j|^{n/(n-m)}) dV_g.
\end{align*}
Choose $J_0$ such that $J_0 \geqslant (2p)^{(n-m)/n}$. Then for any $j \geqslant J_0$ we can apply Adams' inequality \eqref{eq:FontanaMorpurgo2015} to get
\begin{align*}
\int_{\H^n} \Phi_{n,m} & (2p \beta(n,m)|u_j-v_j|^{n/(n-m)}) dV_g\\
 &\leqslant \int_{\H^n} \Phi_{n,m}\Big( \beta(n,m)\Big|\frac{u_j-v_j}{\|\nabla^m(u_j-v_j)\|_{n/m}}\Big|^{n/(n-m)}\Big) dV_g \leqslant C.
\end{align*}
Putting the above estimates together, we deduce that
\begin{equation}\label{eq:Omegaj21}
\sup_{j\in \N}\int_{\Omega_{j,2}^1} \Phi_{n,m}(p \beta(n,m) |u_j|^{n/(n-m)}) dV_g \leqslant C.
\end{equation}
On $\Omega_{j,2}^2$, from \eqref{eqELEMENTARY} we have 
\[
|u_j|^{n/(n-m)} \leqslant (1+\epsilon ) |v_j|^{n/(n-m)} + C_\epsilon  |u_j-v_j|^{n/(n-m)}
\]
with $\epsilon  = (p_1-p)/(2p)$. Denote $r = 2p_1/(p+p_1)$ and $r' = r/(r-1)$. Using H\"older's inequality, we obtain
\begin{align*}
\int_{\Omega_{j,2}^2} \Phi_{n,m} & (p \beta(n,m) |u_j|^{n/(n-m)}) dV_g \\
\leqslant &\int_{\Omega_{j,2}^2} \exp(p \beta(n,m) |u_j|^{n/(n-m)}) dV_g \\
\leqslant &\int_{\Omega_{j,2}^2} \exp \big((1+\epsilon )p \beta(n,m) |v_j|^{n/(n-m)}+ p\beta(n,m) C_\epsilon  |u_j-v_j|^{n/(n-m)} \big) dV_g \\
\leqslant &\Big(\int_{\Omega_{j,2}^2} \exp(p_1 \beta(n,m)|v_j|^{n/(n-m)}) dV_g\Big)^{(p_1+p)/(2p_1)} \\
& \times \Big(\int_{\Omega_{j,2}^2} \exp(r'pC_\epsilon  \beta(n,m) |u_j-v_j|^{n/(n-m)})dV_g\Big)^{1/r'}\\
\leqslant & C \Big(\int_{\H^n} \Phi_{n,m}(p_1 \beta(n,m)|v_j|^{n/(n-m)}) dV_g\Big)^{(p_1+p)/(2p_1)}\\
& \times \Big(\int_{\H^n} \Phi_{n,m}(r'pC_\epsilon  \beta(n,m) |u_j-v_j|^{n/(n-m)})dV_g\Big)^{1/r'}.
\end{align*}
Choosing $J_0$ such that $J_0 \geqslant (r'pC_\epsilon )^{(n-m)/n}$ and using Adams' inequality\eqref{eq:FontanaMorpurgo2015}, we arrive at
\[
\int_{\H^n} \Phi_{n,m}(r'pC_\epsilon  \beta(n,m) |u_j-v_j|^{n/(n-m)})dV_g \leqslant C
\]
for any $j \geqslant J_0$. Using Lemma \ref{integrability} and the inequality $p< p_1$ we obtain
\begin{equation}\label{eq:Omegaj22}
\begin{split}
\sup_{j\in \N} \int_{\Omega_{j,2}^2} \Phi_{n,m} & (p \beta(n,m) |u_j|^{n/(n-m)}) dV_g \\
&\leqslant C\sup_{j\in \N} \int_{\H^n} \Phi_{n,m}(p_1 \beta(n,m)|v_j|^{n/(n-m)}) dV_g.
\end{split}
\end{equation} 
Combining \eqref{eq:mienkobichan}, \eqref{eq:Omegaj1}, \eqref{eq:Omegaj21}, and \eqref{eq:Omegaj22} we obtain the desired result.
\end{proof}

\subsection{Proof of Theorem \ref{Lionslemma} for compactly supported smooth functions} 
\label{subsec-ProofForCompactlySupport}

We follow the argument given in \cite{CCH2013} by $\check{\rm C}$erny, Cianchi and Hencl. This method was used in \cite{doO2014b} to establish the concentration--compactness principle for the Moser--Trudinger inequality in whole space $\R^n$. Recently, it was used and developed in \cite{VHN2016} to prove the concentration--compactness principle for the sharp Adams--Moser--Trudinger inequality in $\R^n$ for any domain (bounded and unbounded). In the case of bounded domains, the result in \cite{VHN2016} covers the results in \cite{doO2014a} for the even order of gradient, and improves the results in \cite{doO2014a} for the odd order of gradient.

Let us go back to the proof of Theorem \ref{Lionslemma}. As usual, we argue by contradiction. Suppose that there exists a sequence $\{u_j\}_j \subset C_0^\infty(\H^n)$ such that:
\begin{itemize}
 \item $\|\nabla^m u_j\|_{n/m} \leqslant 1$, 
 \item $u_j$ converges weakly to a nonzero function $u$ in $W^{m,n/m}(\H^n)$, and 
 \item there exists a number $p \in (1,P_{n,m}(u))$ such that 
\begin{equation}\label{eq:contradictionassumption}
\lim_{j\to +\infty} \int_{\H^n} \Phi_{n,m}(p \beta(n,m) |u_j|^{n/(n-m)}) dV_g =+\infty.
\end{equation} 
\end{itemize}
Our aim is to look for a contradiction to \eqref{eq:contradictionassumption}. Using Rellich--Kondrachov's theorem, by passing to a subsequence if necessary, we can assume that:
\begin{itemize}
 \item $u_j$ converges almost everywhere to $u$ in $\H^n$, 
 \item $u_j$ converges to $u$ in $L^p_{\rm loc}(\H^n)$ for any $p < +\infty$ and additionally 
 \item $\Delta^{(m-1)/2} u_j $ converges almost everywhere to $\Delta^{(m-1)/2} u$ in $\H^n$ if $m$ is odd.
\end{itemize} 
We will need the following simple result.

\begin{lemma}\label{tocutintegral}
Let $u \in L^{n/m}(\H^n)$ be such that $\|u\|_{n/m} \leqslant \widehat C$. Then for any $R > 0$ and any $p>0$ there exists a constant $C$ depending only on $n$, $m$, $p$, $R$, and $\widehat C$ such that
\[
\int_{\overline{\H^n \setminus B(0,R)}} \Phi_{n,m}(p \beta(n,m) |u^\sharp|^{n/(n-m)}) dV_g \leqslant C(n,m,p,R, \widehat C).
\]
\end{lemma}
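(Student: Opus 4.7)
The strategy is to convert the hyperbolic integral into a one-dimensional integral of the decreasing rearrangement $u^*$, and then to control $\Phi_{n,m}$ near zero by a power that is integrable against $(u^*)^{n/m}$.

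First, I would invoke the equimeasurability of $u^\sharp$ and $u^*$. In geodesic polar coordinates one has $dV_g = \sinh^{n-1}(\rho) d\rho d\sigma$ and $\frac{d}{d\rho}|B(0,\rho)| = n\Omega_n \sinh^{n-1}(\rho)$, so substituting $t = |B(0,\rho)|$ yields
\[
\int_{\{d(0,x)\geq R\}} \Phi_{n,m}\bigl(p\beta(n,m)|u^\sharp|^{n/(n-m)}\bigr) dV_g = \int_{|B(0,R)|}^{+\infty} \Phi_{n,m}\bigl(p\beta(n,m)(u^*(t))^{n/(n-m)}\bigr) dt.
\]
Next, Chebyshev's inequality applied to $|u|^{n/m}$ gives $u^*(t)\leq \|u\|_{n/m} t^{-m/n}\leq C t^{-m/n}$, so in particular $M_R := u^*(|B(0,R)|) \leq C|B(0,R)|^{-m/n}$ is a finite quantity depending on $n,m,C,R$ only.

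Then, since the function $s\mapsto \Phi_{n,m}(s)/s^{j_{n/m}-1}$ is continuous and non-decreasing on $[0,+\infty)$ with limit $1/(j_{n/m}-1)!$ at $s=0$, for any fixed $s_0>0$ there is a constant $A(n,m,s_0)$ such that $\Phi_{n,m}(s)\leq A\, s^{j_{n/m}-1}$ on $[0,s_0]$. Taking $s_0=p\beta(n,m)M_R^{n/(n-m)}$ (finite) and using the monotonicity $u^*(t)\leq M_R$ for $t\geq |B(0,R)|$, I obtain the pointwise estimate
\[
\Phi_{n,m}\bigl(p\beta(n,m)(u^*(t))^{n/(n-m)}\bigr) \leq A_R (u^*(t))^{(j_{n/m}-1)n/(n-m)},
\]
where $A_R=A_R(n,m,p,C,R)$.

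The decisive observation is that $j_{n/m}\geq n/m$ by definition, which means $\gamma := (j_{n/m}-1)n/(n-m)-n/m\geq 0$. Using $u^*(t)\leq M_R$ once more, I can absorb the extra power into a factor of $M_R^\gamma$:
\[
(u^*(t))^{(j_{n/m}-1)n/(n-m)} \leq M_R^\gamma (u^*(t))^{n/m}.
\]
Integrating over $t\in[|B(0,R)|,+\infty)$ then gives
\[
\int_{\{d(0,x)\geq R\}} \Phi_{n,m}\bigl(p\beta(n,m)|u^\sharp|^{n/(n-m)}\bigr) dV_g \leq A_R M_R^\gamma \|u\|_{n/m}^{n/m} \leq A_R M_R^\gamma C^{n/m},
\]
which is the required bound depending only on $n,m,p,C,R$. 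The proof is elementary given the rearrangement machinery of Section \ref{sec-Preliminaries}; the only point worth flagging is the exponent comparison $(j_{n/m}-1)n/(n-m)\geq n/m$, since it is precisely this that lets the tail of $\Phi_{n,m}$ be dominated by the integrable quantity $(u^*)^{n/m}$ and hence by $\|u\|_{n/m}^{n/m}$.
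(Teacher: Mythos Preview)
Your proof is correct and follows essentially the same approach as the paper: bound $u^\sharp$ on $\{d(0,x)\geq R\}$ via the Chebyshev-type inequality $u^*(t)\leq \|u\|_{n/m}\,t^{-m/n}$, use that $\Phi_{n,m}(s)\lesssim s^{j_{n/m}-1}$ on bounded intervals, and then invoke the key exponent comparison $(j_{n/m}-1)\tfrac{n}{n-m}\geq \tfrac{n}{m}$ to reduce to $\|u\|_{n/m}^{n/m}$. The only cosmetic difference is that you pass explicitly to the one-dimensional integral in $u^*(t)$, whereas the paper works directly with $u^\sharp$ on $\H^n$; the content is the same.
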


\begin{proof}
Clearly,
\[
\widehat C^{n/m} \geqslant  \int_{\H^n}|u|^{n/m} dV_g =\int_{\H^n}|u^\sharp|^{n/m} dV_g.
\]
Let $x\in \H^n$ be aribitrary but fixed. The monotonicity of $u^*$ implies that
\[
\int_{\H^n}|u^\sharp|^{n/m} dV_g 
\geqslant \int_{B(0, d(0,x) )\}} |u^\sharp|^{n/m} dV_g
\geqslant (u^\sharp(x))^{n/m} |B(0, d(0,x))|.
\]
Hence, we can estimate $u^\sharp$ from above as follows
\[
u^\sharp(x) \leqslant \frac {\widehat C}{|B(0, d(0,x))|^{m/n}}.
\]
Hence for any $y \in \H^n$ such that $d(0,y) \geqslant R$ we have $u^\sharp(y) \leqslant C(n,m,R, \widehat C)$ for some constant $C$ depending only on $n$, $m$, $R$, and $\widehat C$. By the definition of the function $\Phi_{n,m}$, it is easy to check that there exists a constant $C$ depending only on $n$, $m$, $p$, $R$, and $\widehat C$ such that 
\[
\Phi_{n,m} \big(p \beta(n,m) (u^\sharp(y))^{n/(n-m)} \big) \leqslant C(n,m,p,R, \widehat C) (u^\sharp(y))^{n/m}
\]
for any $d(0,y) \geqslant R$. This proves Lemma \ref{tocutintegral} because $u \in L^{n/m}(\H^n)$.
\end{proof}

We now continue to prove Theorem \ref{Lionslemma}. Thanks to $\|\nabla^m u_j\|_{n/m} \leqslant 1$, we can apply the Poincar\'e--Sobolev inequality to obtain $\|u_j\|_{n/m} \leqslant C$ for any $j$ for some constant $C>0$ independent of $j$; see \cite{FM2015}. Now we write
\begin{align*}
\int_{\H^n} \Phi_{n,m} \big(p \beta & (n,m) |u_j|^{n/(n-m)} \big) dV_g \\
= &\int_{\H^n} \Phi_{n,m} \big(p \beta(n,m) |u_j^\sharp|^{n/(n-m)} \big) dV_g\\
=&\Big( \int_{B(0,R)} + \int_{\H^n \setminus B(0,R)} \Big) \Phi_{n,m} \big(p \beta(n,m) |u_j^\sharp|^{n/(n-m)} \big) dV_g.
\end{align*}
Now Lemma \ref{tocutintegral} and our assumption \eqref{eq:contradictionassumption} imply that
\begin{equation}\label{eq:cutoffintegrala}
\lim_{j\to +\infty} \int_{B(0,R)} \Phi_{n,m} \big(p \beta(n,m) |u_j^\sharp|^{n/(n-m)} \big) dV_g = +\infty.
\end{equation}
Note that for $l < n/m -1$ by H\"older's inequality we have 
\[
\int_{B(0,R)} (u^\sharp)^{ln/(n-m)} dV_g \leqslant |B(0,R)|^{1 -ml/(n-m)} \Big(\int_{B(0,R)} (u^\sharp)^{n/m} dV_g\Big)^{ml/(n-m)} .
\]
This inequality and \eqref{eq:cutoffintegrala} imply
\begin{equation}\label{eq:cutoffintegral}
\begin{split}
\lim_{j\to +\infty} \int_0^{|B(0,R)|} & \exp \big(p \beta(n,m) |u_j^*(s)|^{n/(n-m)} \big) ds \\
=& \lim_{j\to +\infty} \int_{B(0,R)} \exp \big(p \beta(n,m) |u_j^\sharp|^{n/(n-m)} \big) dV_g =+\infty.
\end{split}
\end{equation}
There are two possible cases:

\smallskip\noindent\textbf{Case 1: Suppose that $m$ is even.} In this case, we can express $m=2k$ for some $k\geqslant 1$. Denote
\[
f_j = \Delta^k_g u_j, \quad f =\Delta^k_g u.
\] 
By passing to a subsequence if necessary, $f_j^*$ converges almost everywhere in $(0,+\infty)$ and converges weakly in $L^{n/m}(0,+\infty)$ to a function $g$ such that $ g\geqslant f^*$. It is evident that 
\begin{equation}\label{eq:claim}
\int_0^{+\infty} g(s)^{n/m} ds \leqslant 1.
\end{equation}
Then Proposition \ref{keyforLions} and \eqref{eq:boundforMfunction1} give
\[
u_j^*(t) \leqslant \frac{c_{n,k}}{(n\Omega_n^{1/n})^m} \frac n{n-m}\int_t^{|B(0,R)|} \frac{f_j^*(s)}{s^{1-m/n}} ds + C(n,m,R) 
\]
for all $0 < t\leqslant |B(0,R)|$. Here $c_{n,k}$ is the constant given in Proposition \ref{estimateforsolution1}. Define
\[
v_j(t) = \frac{c_{n,k}}{(n\Omega_n^{1/n})^m} \frac n{n-m}\int_t^{|B(0,R)|} \frac{f_j^*(s)}{s^{1- m/n}} ds
\]
for all $0 < t\leqslant |B(0,R)|$. Clearly, we have $v_j(|B(0,R)|) =0$. For any $\delta > 0$, we also have
\[
(u_j^*(t))^{n/(n-m)} \leqslant (1+\delta) v_j(t)^{n/(n-m)} + C_\delta C(n,m,R)^{n/(n-m)},
\]
Choose $\delta > 0$ small enough such that $q :=(1+\delta) p < P_{n,m}(u)$, then we conclude from \eqref{eq:cutoffintegral} that
\begin{equation}\label{eq:reducetovj}
\lim_{j\to +\infty} \int_0^{|B(0,R)|} \exp(q \beta(n,m) |v_j(t)|^{n/(n-m)}) ds = +\infty.
\end{equation}
From the definition of $v_j$, we have
\begin{equation}\label{eq:boundforvj}
v_j(t) \leqslant \Big(\frac 1{\beta(n,m)} \log \Big(\frac{|B(0,R)|}t\Big)\Big)^{(n-m)/n}.
\end{equation}
We claim that for any $r\in (q, P_{n,m}(u))$, any $j_0\in \N$, and any $s_0\in (0, |B(0,R)|)$ there exist $j\geqslant j_0$ and $s\in (0,s_0)$ such that 
\begin{equation}\label{eq:claimeven}
v_j(s) \geqslant \Big(\frac 1{r \beta(n,m)} \log \Big(\frac{|B(0,R)|}s\Big)\Big)^{(n-m)/n}.
\end{equation}
Indeed, if this were not true, then there would exist $r \in (q, P_{n,m}(u))$, $j_0\in \N$ and $s_0\in (0,|B(0,R)|)$ such that 
\[
v_j(s) \leqslant \Big(\frac 1{r \beta(n,m)} \log \Big(\frac{|B(0,R)|}s\Big)\Big)^{(n-m)/n}
\]
for all $j\geqslant j_0$ and all $s\in (0,s_0)$. This and \eqref{eq:boundforvj} imply that 
\begin{align*}
\int_0^{|B(0,R)|} & \exp(q \beta(n,m) |v_j(t)|^{n/(n-m)}) ds\\
=& \int_0^{s_0} \exp(q \beta(n,m) |v_j(t)|^{n/(n-m)}) ds  +\int_{s_0}^{|B(0,R)|} \exp(q \beta(n,m) |v_j(t)|^{n/(n-m)}) ds\\
\leqslant &\int_0^{s_0} \Big(\frac{|B(0,R)|} s\Big)^{q/r} ds + \int_{s_0}^{|B(0,R)|} \Big(\frac{|B(0,R)|} s\Big)^{q} ds\\
\leqslant &C(n,m,q,r,s_0,R)
\end{align*}
for any $j\geqslant j_0$. This contradicts \eqref{eq:reducetovj}; hence proves our claim \eqref{eq:claimeven}. Thus, up to a subsequence, we can assume that there exists a sequence $\{s_j\} \subset (0, |B(0,R)|)$ such that $s_j \leqslant 1/j$ and that
\begin{equation}\label{eq:consequenceofclaim}
v_j(s_j) \geqslant \Big(\frac 1{r \beta(n,m)} \log \Big(\frac{|B(0,R)|}{s_j}\Big)\Big)^{(n-m)/n}.
\end{equation}
Given $L > 0$, let us consider the truncation operators $T^L$ and $T_L$ acting on functions $v$ through
\[
\left\{
\begin{split}
T^L(v) &= \min\{|v|,L\} \sign(v),\\
T_L(v) &= v-T^L(v).
\end{split}
\right.
\]
It is easy to see that $T^L(f_j^*)$ and $T_L(f_j^*)$ converge almost everywhere to $T^L(g)$ and $T_L(g)$ in $(0,+\infty)$, respectively. Since $\lim_{j\to +\infty} v_j(s_j) = +\infty$, given any $L > 0$, after passing to a subsequence if necessary, we can assume that $v_j(s_j) > L$ for any $j$. Then there exists $r_j\in (s_j,|B(0,R)|)$ such that $v_j(r_j) =L$. On the other hand, from the definition of $v_j$ and the monotonicity of $f_j^*$, we have
\[
v_j(s_j) \leqslant \frac{c_{n,k+1}}{(n\Omega_n^{1/n})^m} f_j^*(s_j) |B(0,R)|^{m/n},
\]
hence $\lim_{j\to +\infty} f_j^*(s_j) =\infty.$ Therefore, by passing again to a subsequence if necessary, we assume that $f_j^*(s_j) > L$ for any $j$, hence there exists $t_j\in (0, +\infty)$ such that $f_j^*(t_j) = L$ and $f^*_j(s) < L$ for any $ s> t_j$. Denote $a_j = \min\{t_j,r_j\}$. We then have 
\begin{align*}
v_j(s_j) -L  =& \frac{c_{n,k}}{(n\Omega_n^{1/n})^m}\frac{n}{n-m}\int_{s_j}^{r_j} \frac{f_j^*(s)}{s^{1-m/n}} ds\\
 \leqslant& \frac{c_{n,k}}{(n\Omega_n^{1/n})^m}\frac{n}{n-m}\int_{s_j}^{a_j} \frac{f_j^*(s)-L}{s^{1- m/n}} ds + \frac{c_{n,k}}{(n\Omega_n^{1/n})^m}\frac{n}{n-m}\int_{s_j}^{r_j} \frac{L}{s^{1-m/n}} ds\\
\leqslant& \Big(\int_{s_j}^{a_j} (f_j^*(s)-L)^{n/m}ds\Big)^{m/n} \Big(\frac 1{\beta(n,m)} \log \frac{a_j}{s_j}\Big)^{\frac{n-m}n} + \frac{c_{n,k+1}}{(n\Omega_n^{1/n})^m}L r_j^{m/n}\\
\leqslant& \Big(\int_0^{+\infty} (T_L(f_j^*))^{n/m}ds\Big)^{m/n} \Big(\frac 1{\beta(n,m)} \log \frac{|B(0,R)|}{s_j}\Big)^{\frac{n-m}n} \\
&+ \frac{c_{n,k+1}}{(n\Omega_n^{1/n})^m}L |B(0,R)|^{m/n}.
\end{align*}
The latter estimate and \eqref{eq:consequenceofclaim} imply that
\[
r^{-(n-m)/n} \leqslant \Big(\int_{0}^{+\infty} (T_L(f_j^*))^{n/m} ds\Big)^{m/n}
\]
for $j$ large enough, equivalently, this is
\[
r^{-(n-m)/m} \leqslant \int_{0}^{+\infty} (T_L(f_j^*))^{n/m} ds.
\]
Hence, for $j$ large enough
\[
1-r^{-(n-m)/m} \geqslant \int_{0}^{+\infty} \big[(f_j^*)^{n/m} -(T_L(f_j^*)]^{n/m} \big)ds.
\]
Thanks to \eqref{eq:claim}, by letting $j \nearrow + \infty$ and using Fatou's lemma we get
\begin{equation}\label{eq:welldoneeven}
1-r^{-(n-m)/m} \geqslant \int_{0}^{+\infty} \big[ g^{n/m} -(T_L(g))^{n/m} \big] ds.
\end{equation}
Now we try to obtain a contradiction by using \eqref{eq:welldoneeven}.

\smallskip\noindent\underline{Case 1.1}: Suppose $\|f\|_{n/m} < 1$. Since 
\[
\int_0^{+\infty} g^{n/m} ds \geqslant \int_0^{+\infty} (f^*)^{n/m}ds =\|f\|_{n/m}^{n/m}
\]
and 
\[
\lim_{L\to +\infty} \int_0^{+\infty} (T_L(g))^{n/m} ds =0,
\]
we can choose some $L > 0$ such that 
\begin{equation}\label{eq:Llargereven}
\frac{1 -\|f\|_{n/m}^{n/m}}{1 -\int_0^{+\infty}(g^{n/m} -(T_L(g))^{n/m}) ds} > \Big(\frac{r}{P_{n,2k}(u)}\Big)^{(n-m)/m}.
\end{equation}
Fix such a number $L$, it follows from \eqref{eq:welldoneeven} and \eqref{eq:Llargereven} that
\begin{align*}
r \geqslant & \Big(1 -\int_0^{+\infty}(g^{n/m} -(T_L(g))^{n/m}) ds\Big)^{-m/(n-m)} \\
 > & \frac r{P_{n,2k}} (1 -\|f\|_{n/m}^{n/m})^{-m/(n-m)} =r,
\end{align*}
which is a contradiction.

\smallskip\noindent\underline{Case 1.2}: Suppose $\|f\|_{n/m} =1$. Then from \eqref{eq:claim} we must have $\int_0^{+\infty} g^{n/m} ds =1$. Then we can choose some large $L > 0$ such that 
\[
\int_0^{+\infty}(g^{n/m} -(T_L(g))^{n/m}) ds > 1 -\frac12 \Big(\frac 1r\Big)^{(n-m)/m}.
\]
Fix such $L$, then we obtain a contradiction since by \eqref{eq:welldoneeven} we have
\[
1-r^{-(n-m)/m} \geqslant \int_0^{+\infty} \big( g^{n/m} -(T_L(g))^{n/m} \big)ds > 1 -\frac12 \Big(\frac 1r\Big)^{(n-m)/m}.
\]
This finishes our proof in case that $m$ is even.

\smallskip\noindent\textbf{Case 2: Suppose that $m$ is odd.} Since the case $m=1$ was proved in \cite{Kar2015}. Using the same argument in \cite{doO2014b} gives another proof of this case. Hence it suffices to consider $m\geqslant 3$. In this scenario, we write $m =2k+1$ for some $k\geqslant 1$. Denote
\[
f_j =\Delta^k_g u_j, \quad f= \Delta_g^k u.
\] 
Using Sobolev's inequality we have $\|f_j\|_{n/(2k)} \leqslant C$. Proposition \ref{keyforLions} and \eqref{eq:boundforMfunction1} give 
\[
u_j^*(t_1) -u_j^*(t_2) \leqslant \frac{c_{n,k}}{(n\Omega_n^{1/n})^{2k}} \frac{n}{n-2k} \int_{t_1}^{t_2} \frac{f_j^*(s)}{s^{1-2k/n}} ds + C(n,k),
\]
where $c_{n,k}$ is again the constant given in Proposition \ref{estimateforsolution1}. Since $\|\nabla f_j \|_{n/m} \geqslant \|\nabla f_j^\sharp\|_{n/m}$ and as in Case 2 in the proof of Theorem \ref{Tarsihyperbolic} in Section \ref{sec-AdamsTypeWithBoudary}, we know that
\begin{equation}\label{eq:chandaoham}
\begin{split}
1\geqslant & \int_{\H^n} |\nabla_g f^\sharp|^{n/m} dV_g
 = (n\Omega_n)^{n/m} \int_0^{+\infty} |(f^*)'(s)|^{n/m} (\sinh F(s) )^{n(n-1)/m} ds.
\end{split}
\end{equation}
Using integration by parts we obtain
\begin{align*}
\int_{t_1}^{t_2} \frac{f_j^*(s)}{s^{1-2k/n}} ds & = \frac n{2k} \int_{t_1}^{t_2} (-f_j^*)'(s) s^{2k/n} ds + \frac n{2k} t_2^{2k/n} f_j^*(t_2) - \frac n{2k} t_1^{2k/n} f_j^*(t_1).
\end{align*}
From \eqref{eq:chandaoham} and the fact that $\sinh F(s) \geqslant (s/\Omega_n)^{1/n}$ we easily deduce that $t^{2k/n} f_j^*(t) \leqslant C$ for some $C$ independent of $j$. Consequently, we obtain
\begin{equation}\label{eq:keyforLionsodd}
u_j^*(t_1) -u_j^*(t_2) \leqslant \frac{c_{n,k+1}}{(n\Omega_n^{1/n})^{2k}} \int_{t_1}^{t_2} (-f_j^*)'(s)s^{2k/n} ds + C(n,k),
\end{equation}
for some $C(n,k)$ depending only on $n$ and $k$. Note that \eqref{eq:keyforLionsodd} plays the same role as \eqref{eq:keyforLions} in our proof below when $m$ is odd. Our proof proceeds along the same line as in the case when $m$ is even; hence we limit ourselves to sketching the proof. Define
\[
v_j(t) = \frac{c_{n,k+1}}{(n\Omega_n^{1/n})^{2k}} \int_{t}^{|B(0,R)|} (-f_j^*)'(s)s^{2k/n} ds
\]
for $t\in (0,|B(0,R)|)$. Then for $q \in (p, P_{n,m}(u))$ we have
\begin{equation}\label{eq:reducetovjodd}
\lim_{j\to +\infty} \int_0^{|B(0,R)|} \exp(q \beta(n,m) |v_j(t)|^{n/(n-m)}) ds = +\infty.
\end{equation}
For any $r\in (q, P_{n,m}(u))$, for any $j_0\in \N$, and any $s_0\in (0, |B(0,R)|)$ we claim that there exist $j\geqslant j_0$ and $s\in (0,s_0)$ such that 
\begin{equation}\label{eq:claimodd}
v_j(s) \geqslant \Big(\frac 1{r \beta(n,2k)} \log \big(\frac{|B(0,R)|}s\big)\Big)^{(n-2k)/n}.
\end{equation}
Indeed, if this does not true from \eqref{eq:reducetovjodd} we will obtain a contradiction as in the case $m$ even since
\[
v_j(t) \leqslant \Big(\frac 1{\beta(n,2k+1)} \log \big(\frac{|B(0,R)|}t\big)\Big)^{(n-2k-1)/n}
\]
for all $t\in (0,|B(0,R)|)$. Hence, \eqref{eq:claimodd} holds. In particular, up to a subsequence, there exists a sequence $\{s_j\} \subset (0, |B(0,R)|)$ such that $ s_j \leqslant 1/j$ and that
\begin{equation}\label{eq:consequenceofclaimodd}
v_j(s_j) \geqslant \Big(\frac 1{r \beta(n,2k+1)} \log \big(\frac{|B(0,R)|}{s_j}\big)\Big)^{(n-2k-1)/n}.
\end{equation}
Since
\[
v_j(s_j) \leqslant \frac{c_{n,k+1}}{(n\Omega_n^{1/n})^{2k}} |B(0,R)|^{2k/n} \big[ f_j^*(s_j) -f_j^*(|B(0,R)|) \big],
\]
we conclude that
\[
\lim_{j\to +\infty}v_j(s_j) = \lim_{j\to +\infty} f_j^*(s_j) = +\infty.
\] 
Therefore, given $L > 0$, by passing to a subsequence, we can assume that $v_j(s_j) > L$ and $f_j^*(s_j) > L$. Hence there exists $r_j \in (s_j, |B(0,R)|)$ and $t_j\in (s_j, +\infty)$ such that $v_j(r_j) = L$ and $f_j^*(t_j) =L$. Denote $a_j =\min\{t_j, r_j\}$, we have
\begin{align*}
v_j(s_j) -L = &\frac{c_{n,k+1}}{(n\Omega_n^{1/n})^{m-1}}\int_{s_j}^{r_j} (-f_j^*)'(s) s^{(m-1)/n} ds\\
=&\frac{c_{n,k+1}}{(n\Omega_n^{1/n})^{m-1}} \Big( \int_{s_j}^{a_j}  + \int_{a_j}^{r_j} \Big) (-f_j^*)'(s) s^{(m-1)/n} ds\\
\leqslant &\frac{c_{n,k+1}}{(n\Omega_n^{1/n})^m} \Big((n\Omega_n)^{n/m} \int_{s_j}^{a_j} |(f_j^*)'(s)|^{n/m}  (\sinh F(s) )^{n(n-1)/m} ds \Big)^{m/n} \\
& \times \Big(\int_{r_j}^{a_j} (\sinh F(s))^{-n(n-1)/(n-m)} s^{(m-1)/(n-m)}ds\Big)^{(n-m)/n} \\
&+ \frac{c_{n,k+1}}{(n\Omega_n^{1/n})^{m-1}} a_j^{(m-1)/n} \big[ f_j^*(a_j) -f_j^*(r_j) \big] \\
\leqslant &\|T_L(f_j^\sharp)\|_{n/m} \Big(\frac1{\beta(n,m)} \log \big(\frac{|B(0,R)|}{s_j}\big)\Big)^{(n-m)/n} \\
&+ \frac{c_{n,k+1}}{(n\Omega_n^{1/n})^{m-1}} |B(0,R)|^{(m-1)/n} L.
\end{align*}
Here we have used the estimate $\sinh F(s) \geqslant (s/\Omega_n)^{1/n}$ and the facts that if $t_j < r_j$ then $f_j^*(a_j) -f_j^*(r_j) \leqslant L$ while if $t_j\geqslant r_j$ then $f_j^*(a_j) -f_j^*(r_j) =0$. Hence, for $j$ large enough we obtain from \eqref{eq:consequenceofclaimodd} the following
\[
r^{-(n-m)/m} \leqslant \int_{\H^n} |\nabla_g T_L(f_j^\sharp)|^{n/m} dV_g.
\]
Note that $T_L(f_j^\sharp) = (T_L(f_j))^{\sharp}$; thus
\[
r^{-(n-2k-1)/(2k+1)} \leqslant \int_{\H^n} |\nabla_g T_L(f_j^\sharp)|^{n/(2k+1)} dV_g\leqslant \int_{\H^n} |\nabla_g T_L(f_j)|^{n/(2k+1)} dV_g.
\] 
Notice that
\[
\int_{\H^n} |\nabla_g T_L(f_j)|^{n/m} dV_g + \int_{\H^n} |\nabla_g T^L(f_j)|^{n/m}dV_g = \int_{\H^n} |\nabla_g f_j|^{n/m}dV_g.
\]
Then, we have for $j$ large enough
\[
1-r^{-(n-m)/m} \geqslant \int_{\H^n} |\nabla_g T^L(f_j)|^{n/m} dV_g.
\]
We have that $T^L(f_j)$ converges almost everywhere to $T^L(f)$ on $\H^n$. Moreover, $\{T^L(f_j)\}_j$ is bounded sequence in $W^{1,n/m}(\H^n)$, by passing to a subsequence if necessary, we assume that 
\begin{itemize}
 \item $T^L(f_j)$ converges weakly to a function $g$ in $W^{1,n/m}(\H^n)$ and 
 \item $T^L(f_j)$ converges to $g$ in $L^p_{\rm loc}(\H^n)$ for any $p <n/(2k)$ by the Rellich--Kondrachov theorem. 
\end{itemize}
This shows that $g = T^L(f)$, hence by the weak lower semi-continuity of the $L^{n/m}$-norm of gradient, we have
\begin{equation}\label{eq:welldoneodd}
1-r^{-(n-m)/m} \geqslant \int_{\H^n} |\nabla_g T^L(f)|^{n/m} dV_g,
\end{equation}
which is similar to \eqref{eq:welldoneeven}.

\smallskip\noindent\underline{Case 2.1}: Suppose $\|\nabla_g f\|_{n/m} < 1$. Since 
\[
\lim_{L\to +\infty} \int_{\H^n} |\nabla_g T^L(f)|^{n/m} dV_g = \int_{\H^n} |\nabla_g f|^{n/m} dV_g,
\]
we can choose some large $L > 0$ such that 
\begin{equation}\label{eq:Llargeodd}
\frac{1 -\|\nabla_g f\|_{n/m}^{n/m}} {1 -\|\nabla_g T^L(f)\|_{n/m}^{n/m}} > \Big(\frac{r}{P_{n,2k+1}(u)}\Big)^{(n-m)/m}.
\end{equation}
Fix such $L >0$. Combining \eqref{eq:welldoneodd} and \eqref{eq:Llargeodd} gives
\begin{align*}
r \geqslant & (1 -\|\nabla_g T^L(f)\|_{n/m}^{n/m})^{- m/(n-m)}
 > \frac r{P_{n,m}(u)} (1 -\|\nabla_g f\|_{n/m}^{n/m})^{-m/(n-m)}=r,
\end{align*}
a contradiction.

\smallskip\noindent\underline{Case 2.2}: Suppose $\|\nabla_g f\|_{n/m} = 1$. Then we can choose $L > 0$ such that 
\[
\|\nabla_g T^L(f)\|_{n/m}^{n/m} > 1 -\frac 12 \Big(\frac1 r\Big)^{(n-m)/m}.
\]
Fix such $L >0$ and by using \eqref{eq:welldoneodd} we obtain a contradiction because
\[
1-r^{-(n-m)/m} \geqslant \|\nabla_g T^L(f)\|_{n/m}^{n/m} > 1 -\frac 12 \Big(\frac1 r\Big)^{(n-m)/m}.
\]
This finishes our proof when $m$ is odd.


\subsection{The sharpness of (\ref{eq:Lions})}

It remains to check the sharpness of the exponent $P_{n,m}(u)$ in Theorem \ref{Lionslemma}. To this purpose, we will show that for any $\alpha \in (0,1)$, there exists a sequence $\{u_j\}_j\subset W^{m, n/m}(\H^n)$ and $u\in W^{m,n/m}(\H^n)$ such that 
\begin{itemize}
 \item $\|\nabla_g^m u_j\|_{n/m} =1$, $\|\nabla_g^m u\|_{n/m} =\alpha $, 
 \item $u_j \rightharpoonup u$ in $W^{m,n/m}_0(\H^n)$, and 
 \item $u_j\to u$ almost everywhere in $\H^n$ 
\end{itemize}
such that
\[
\lim_{j\to+\infty}\int_{\H^n} \Phi_{n,m} \big(\beta(n,m) (1-\alpha^{n/m})^{-m/(n-m)} |u_j|^{n/(n-m)} \big) dx = +\infty.
\]
For $j\geqslant 2$, we define
\[
v_j(x) = 
\begin{cases}
\Big(\dfrac {\log j}{\beta(n,m)}\Big)^{1-m/n} + \dfrac{n\beta(n,m)^{m/n -1}}{2 (\log j)^{m/n}} \displaystyle\sum_{l=1}^{m-1} \frac {(1 -j^{2/n} |x|^2)^l}l &\mbox{if $0\leqslant |x|\leqslant j^{-1/n}$},\\
-n \beta(n,m)^{m/n -1} (\log j)^{-m/n} \log |x| &\mbox{if $j^{-1/n} \leqslant |x| < 1$},\\
\xi_j(x) &\mbox{if $1\leqslant |x|\leqslant 2$},
\end{cases}
\]
where $\xi_j\in C_0^\infty(B_2)$ are radial functions which are chosen such that $\xi_j =0$ on $\partial B_1$ and $\partial B_2$, and for $l =1,2, ... ,k-1$
\[
\frac{\partial^l \xi_j}{\partial r^l} \Big{|}_{\partial B_1} = (-1)^l (l-1)!\beta(n,m)^{m/n -1} (\log j)^{-m/n}, \quad \quad \frac{\partial^l \xi_j}{\partial r^l} \Big{|}_{\partial B_2} =0,
\]
and $\xi_j$, $|\nabla^l \xi_j|$ and $|\nabla^k \xi_j|$ are all $O((\log j)^{-m/n})$. The choice of these functions is inspired from \cite[Section 3]{Zhao2013}.

Consider the function $w_j(x) =v_j(3x)$ where $x\in \H^n$. Clearly, $w_j \in W^{m,n/m}(\H^n)$ with support in $B_{2/3}$. An easy computation shows that 
\[
1 \leqslant \int_{\H^n} |\nabla^m v_j(x)|^{n/m} dx \leqslant 1 + O((\log j)^{-1});
\]
hence
\begin{equation*}\label{eq:wdoublej}
1 -\frac{c}{\log j} \leqslant \|\nabla_g^m w_j\|_{n/m}^{n/m} \leqslant 1 + \frac{C}{\log j},
\end{equation*}
for some positive constants $c$ and $C$ independent of $j$. Setting
\[
\widetilde{w}_j = w_j /\|\nabla_g^m w_j\|_{ n/m},
\] 
we then have the following claims
\begin{itemize}
 \item $\widetilde{w}_j \rightharpoonup 0$ weakly in $W^{m,n/m}(\H^n)$ and
 \item $\widetilde{w}_j \rightharpoonup 0$ almost everywhere in $\H^n$. 
\end{itemize}
Taking a function $v\in C_0^\infty(B_1)$ in such a way that $v$ is constant in $B_{2/3}$ and $\|\nabla_g^m v\|_{n/m} = \alpha $. Then we define 
\[
u_j = v + (1-\alpha^{n/m})^{m/n} \widetilde{w}_j.
\] 
Clearly, $u_j\in W_0^{m,n/m}(\Omega)$ and $\|\nabla_g^m u_j\|_{n/m} = 1$ for all $j\geqslant 2$ since the supports of $\nabla_g^m v$ and $\nabla_g^m \widetilde{w}_j$ are disjoint and $u_j \rightharpoonup v$ in $W^{m,n/m}(\H^n)$. Replacing $v$ by $-v$ if necessary, we can assume that $v\geqslant A$ on $B_{2/3}$ for some $A >0$. Then we can estimate
\begin{align*}
\int_{\H^n} &\Phi_{n,m} (\beta(n,m) (1-\alpha^{n/m})^{-m/(n-m)} |u_j|^{n/(n-m)}) dV_g\\
&\geqslant \int_{|x| \leqslant j^{-1/n}} \Phi_{n,m}
\left( \begin{gathered}
\frac{\beta(n,m)}{ (1-\alpha^{n/m})^{m/(n-m)}} \times \hfill \\
\Big[A +\frac{(1-\alpha^{n/m})^{m/n}}{(1 + C/\log j)^{m/n}}\Big(\frac {\log j}{\beta(n,m)}\Big)^{\frac{n-m}{n}}\Big]^{\frac{n}{n-m}} \\ 
\end{gathered}  \right) dV_g\\
&\geqslant C'\omega_n \exp\Big(\Big[C + \frac{(\log j)^{(n-m)/n}}{(1 + C/\log j)^{m/n}}\Big]^{\frac{n}{n-m}} -\log j\Big), 
\end{align*} 
for some positive constants $C$ and $C'$ independent of $j$. It is easy to see that there exist a constant $C_1 \in (0, C)$ and some $j_0$ such that
\begin{align*}
\exp\Big(\Big[C +& \frac{(\log j)^{(n-m)/n}}{(1 + C/\log j)^{m/n}}\Big]^{n/(n-m)} -\log j\Big) \\
&\geqslant \exp\Big(\Big[C_1 + (\log j)^{(n-m)/n} \Big]^{n/(n-m)} -\log j\Big)
\end{align*} 
 for any $j\geqslant j_0$. Putting these estimates together, we deduce that
\[\begin{split}
\liminf_{j\to +\infty} \int_{\H^n} \exp \big( & \beta(n,m) (1-\alpha^{n/m})^{-m/(n-m)} |u_j|^{n/(n-m)} \big) dV_g \\
 \geqslant & \lim_{j\to +\infty} \exp\Big(\big(C_1 + (\log j)^{(n-m)/n}\big)^{n/(n-m)} -\log j\Big) = +\infty.
\end{split}\]
This proves the sharpness of \eqref{eq:Lions} as claimed.


\section*{Acknowledgments}

We would like to thank an anonymous referee for pointing out some inaccuracies in the original version of this manuscript. V.H.N would like to acknowledge the support of the CIMI postdoctoral research fellowship. Q.A.N would like to thank the Vietnam Institute for Advanced Study in Mathematics (VIASM) for excellent working environment where this work was initiated and carried. The research of Q.A.N is funded by the Vietnam National Foundation for Science and Technology Development (NAFOSTED) under grant number 101.02-2016.02.


\addtocontents{toc}{\protect\setcounter{tocdepth}{0}}

\section*{ORCID iDs}

Qu\cfac oc Anh Ng\^o: 0000-0002-3550-9689

\addtocontents{toc}{\protect\setcounter{tocdepth}{2}}


\end{document}